\newcommand{\R}{{\mathbb R}}
\newcommand{\N}{{\mathbb N}}
\newcommand{\cN}{{\mathcal N}}
\newcommand{\Z}{{\mathbb Z}}
\newcommand{\C}{{\mathcal C}}
\newcommand{\D}{{\mathcal D}}
\newcommand{\F}{{\mathcal F}}
\newcommand{\G}{{\mathcal G}}
\renewcommand{\H}{{\mathcal H}}
\newcommand{\E}{\mathbb{E}}
\newcommand{\Var}{\operatorname{Var}}
\newcommand{\Cov}{\operatorname{Cov}}
\newcommand{\dconv}{\Rightarrow}
\newcommand{\ind}{\mathds{1}}
\newcommand\iteqref[1]{{\it(\ref{#1})}}
\def\weakto{\Rightarrow}
\renewcommand{\P}{\mathbb{P}}
\renewcommand{\H}{{\mathcal H}}
\renewcommand{\vec}{\boldsymbol}
\newtheorem{theorem}{Theorem}
\newtheorem{proposition}{Proposition}
\newtheorem{lemma}{Lemma}
\newtheorem{corollary}{Corollary}
\theoremstyle{definition}
\newtheorem{example}{Example}
\newtheorem{remark}{Remark}
\newcommand{\comment}[1]{}
\def\limn{\lim_{n\to\infty}}
\def\mfa{\mbox{ for all }}
\def\calB{{\mathcal B}}
\def\vv#1{\vec #1}
\def\indd#1{{\ind}_{\{#1\}}}
\def\inv{^{-1}}
\def\mmas{\mbox{ as }}
\def\proba{\mathbb P}
\def\pp#1{\left(#1\right)}
\def\spp#1{(#1)}
\def\ccbb#1{\left\{#1\right\}}
\def\abs#1{\left|#1\right|}
\def\Rd{{\mathbb R^d}}
\def\mmid{\;\middle\vert\;}
\def\wt#1{\widetilde{#1}}
\def\wb#1{\overline{#1}}
\def\topp#1{^{(#1)}}
\newcommand{\eqnh}{\begin{eqnarray*}}
\newcommand{\eqne}{\end{eqnarray*}}
\newcommand{\eqnhn}{\begin{eqnarray}}
\newcommand{\eqnen}{\end{eqnarray}}
\newcommand{\equh}{\begin{equation}}
\newcommand{\eque}{\end{equation}}
\begin{document}\sloppy
\title{Invariance principles for operator-scaling Gaussian random fields}

\author{Hermine Bierm\'e}
\address{
Hermine Bierm\'e\\
Laboratoire de Mathématiques et Applications, UMR-CNRS 7348\\
Universit\'e de Poitiers, T\'el\'eport 2-BP30179, Boulevard Marie et Pierre Curie,
86962 Chasseneuil, France.
}
\email{hermine.bierme@math.univ-poitiers.fr}

\author{Olivier Durieu}
\address{
Olivier Durieu\\
Laboratoire de Math\'ematiques et Physique Th\'eorique, UMR-CNRS 7350\\
F\'ed\'eration Denis Poisson, FR-CNRS 2964\\
Universit\'e Fran\c{c}ois--Rabelais de Tours, Parc de Grandmont, 37200 Tours, France.
}
\email{olivier.durieu@lmpt.univ-tours.fr}

\author{Yizao Wang}
\address
{
Yizao Wang\\
Department of Mathematical Sciences\\
University of Cincinnati\\
2815 Commons Way\\
Cincinnati, OH, 45221-0025, USA.
}
\email{yizao.wang@uc.edu}

\date{\today}

\begin{abstract}
Recently, \citet{hammond13power} introduced a model of correlated random walks that scale to fractional Brownian motions with long-range dependence. In this paper, we consider a natural generalization of this model to dimension $d\geq 2$. We define a $\Z^d$-indexed random field with dependence relations governed by an underlying random graph with vertices $\Z^d$, and we study the scaling limits of the partial sums of the random field over rectangular sets.
An interesting phenomenon appears: depending on how fast the rectangular sets increase along different directions, different random fields arise in the limit. In particular, there is  a critical regime where the limit random field is operator-scaling  and inherits the full dependence structure of the discrete model, whereas in other regimes the limit random fields have at least one direction that has either invariant or independent increments, no longer reflecting the dependence structure in the discrete model. The limit random fields form a general class of operator-scaling Gaussian random fields. Their increments and path properties are investigated.
\end{abstract}

\maketitle


\section{Introduction}

Self-similar processes are important in probability theory because of their connections with limit theorems and their intensive use in modeling, see for example \citep{taqqu86bibliographical}. These are processes $(X_t)_{t\in\R}$ that satisfy, for some $H>0$, 
\begin{equation}\label{eq:ss}
(X(\lambda t))_{t\in\R} \stackrel{fdd}=\lambda ^H(X(t))_{t\in\R}, \mbox{ for all }\lambda >0,
\end{equation}
where `$\stackrel{fdd}=$' stands for `equal in finite-dimensional distributions'. 
It is well known that the only Gaussian processes that are self-similar and have stationary increments are the fractional Brownian motions. Throughout, we let $(B_H(t))_{t\in\R}$ denote a fractional Brownian motion with Hurst index $H\in(0,1)$; this is a zero-mean Gaussian process with covariances given by
\[
 \Cov(B_H(t),B_H(s))=\frac1{2}( |t|^{2H}+|s|^{2H}-|t-s|^{2H}),\quad t,s\in\R.
\]
Fractional Brownian motions were first introduced in 1940 by \citet{kolmogorov40wienersche} and their relevance was first recognized by \citet{mandelbrot68fractional}, who gave them their name.
Invariance principles for fractional Brownian motions have a long history, since the seminal work of \citet{davydov70invariance} and \citet{taqqu75weak}. As the limiting objects of stochastic models, fractional Brownian motions have appeared in various areas, including random walks in random environment \citep{enriquez04simple}, telecommunication processes \citep{mikosch07scaling}, interacting particle systems \citep{peligrad08fractional}, and finance \citep{kluppelberg04fractional}, just to mention a few. 

Recently, \citet{hammond13power} proposed a simple discret model that scales to fractional Brownian motions with $H>1/2$. This model, to be described below, can be interpreted as  a strongly correlated random walk with $\pm1$ jumps. As the simple random walk can be viewed as the discrete counterpart of the Brownian motion, the correlated random walks proposed in \citep{hammond13power} can be viewed as the discrete counterparts of the fractional Brownian motions for $H>1/2$. In this regime, the fractional Brownian motion is well known to exhibit long-range dependence \citep{samorodnitsky06long}.

The aim of the present paper is to generalize the Hammond--Sheffield model to dimension $d\geq 2$ and to study the scaling limits. 
Based on a natural generalization of the Hammond--Sheffield model, we establish invariance principles for a new class  of operator-scaling Gaussian random fields.
 The operator-scaling random fields are generalization of self-similar processes~\eqref{eq:ss} to random fields, proposed by \citet{bierme07operator}. Namely, for a matrix $E$ with all eigenvalues having real positive parts, the random field $(X(\vec t))_{\vec t\in\R^d}$ is said to be $(E,H)$-operator-scaling for some $H>0$, if
 \begin{equation}\label{eq:OS}
 (X(\lambda^E\vec t))_{\vec t\in\R^d} \stackrel{fdd}=\lambda^H(X(\vec t))_{\vec t\in\R^d} \mbox{ for all }\lambda>0,
 \end{equation}
 where $\lambda^E:=\sum_{k\geq 0}(\log \lambda)^kE^k/k!$. In this paper, we focus on the case that $E$ is a $d\times d$ diagonal matrix with diagonal entries $\beta_1,\dots,\beta_d$, denoted by $E = {\rm diag}(\beta_1,\dots,\beta_d)$. It is worth mentioning that a simple generalization of the self-similarity would be to take $E$ being the identity matrix in~\eqref{eq:OS}, and the advantage of taking a general diagonal matrix is to be able to  accommodate  anisotropic random fields. Examples of operator-scaling Gaussian random fields include fractional Brownian sheets \citep{kamont96fractional} and L\'evy Brownian sheets \citep{samorodnitsky94stable}. 
Here, our results provide a new class to this family with corresponding invariance principles. 
 We also mention that there are other well investigated generalizations of fractional Brownian motions to Gaussian random fields, including distribution-valued ones. See for example \citep{bierme10selfsimilar,lodhia14fractional,sheffield07gaussian}.

\medskip
We now give a brief description of the Hammond--Sheffield model and its generalization to high dimensions. Let us start with the one-dimensional model. 
Let $\mu$ be a probability distribution with support in $\{1,2,\ldots\}$. Using the sites of $\Z$ as vertices, one defines a random directed graph $\G_\mu$ by sampling independently one directed edge on each site. The edge starting at the site $i\in\Z$ will point backward to the site $i-Z_i$, where $Z_i$ is a random variable with distribution $\mu$.
Here, $\mu$ is a probability distribution in form of
\begin{equation}\label{eq:RV}
\mu(\{n,\ldots\})=n^{-\alpha}L(n),
\end{equation}
 where $L$ is a slowly varying function and $\alpha\in(0,1/2)$. This choice of $\alpha$ guarantees that the graph $\G_\mu$ has a.s.\ infinitely many components, each being a tree with infinite vertices. Conditioning on $\G_\mu$, 
one then defines $(X_j)_{j\in\Z}$ such that 
\begin{itemize}
\item $X_j=X_i$ if $j$ and $i$ are in the same component of the graph, 
\item $X_j$ and $X_i$ are independent otherwise, and 
\item marginally each $X_i$ has the distribution $(1-p)\delta_ {-1}+p\delta_1$ for some $p\in(0,1)$.
\end{itemize}
The partial-sum process $S_n=\sum_{i=1}^n X_i,{n\ge1}$, can be interpreted as a correlated random walk. \citet[Theorem 1.1]{hammond13power} proved that 
\equh\label{eq:HS}
\pp{\frac{S_{\lfloor nt \rfloor}-\E S_{\lfloor nt\rfloor}}{n^{\alpha+1/2}L(n)}}_{t\in[0,1]} \weakto \sigma\pp{B_{\alpha+1/2}(t)}_{t\in[0,1]}
\eque
as $n\to\infty$.  \citet{hammond13power} actually established a strong invariance principle for this convergence. 

\medskip

To  generalize the Hammond--Sheffield model to high dimensions, we start by constructing a random graph $\G_\mu$ with vertices $\Z^d$. Similarly, at each vertex $\vec i\in\Z^d$ we first sample independently a random edge of length $\vec Z_{\vec i}$, according to a probability distribution $\mu$, and connect $\vec i$ to $\vec i-\vec Z_{\vec i}$. The distribution $\mu$ has support within $\{1,2,\dots\}^d$, intuitively meaning that all the edges are directing towards the southwest when $d=2$. Most importantly, the distribution $\mu$ is assumed to be in the strict domain of normal attraction of $(E,\nu)$, denoted by $\mu\in\D(E,\nu)$, for some matrix $E = {\rm diag}(1/\alpha_1,\dots,1/\alpha_d)$ with $\alpha_i\in(0,1), i=1,\dots,d$, and an infinitely divisible probability measure $\nu$ on $\R_+^d$.
That is, if $(\vec \xi_i)_{i\ge1}$ are i.i.d.\ copies with distribution $\mu$, then 
\equh\label{DoA}
n^{-E}\sum_{i=1}^n \vec \xi_i \weakto \nu.
\eque
 This assumption is a natural generalization of~\eqref{eq:RV} to high dimensions. 
We again focus on the case that $\G_\mu$ has infinitely many components, which turns out to be exactly the case that $q(E) := {\rm trace}(E)>2$, and given $\G_\mu$ we define $(X_j)_{j\in\Z^d}$ similarly as in dimension one. Remark that $q(E)>2$ is trivially satisfied for $d\geq 2$, due to the restriction on $\alpha_i\in(0,1)$. See Section~\ref{sec:model} for detailed descriptions of the measure $\mu$, the random graph $\G_\mu$, and the model.

\medskip

To study the scaling limit of the partial sums of the random fields over increasing rectangles, we introduce 
\[
S_{\vec n}(\vec t):=\sum_{\vec j\in R(\vec n,\vec t)}X_{\vec j}, \quad\quad{\vec n}=(n_1,\ldots,n_d)\in \N^d, {\vec t}=(t_1,\ldots,t_d)\in [0,1]^d
\]
with $R(\vec n, \vec t)=\prod_{k=1}^d[0,n_kt_k-1]\cap \Z^d$.
Surprisingly, the picture is much more complicated in high dimensions. 
In order to obtain an invariance principle for $S_{\vec n}(\vec t)$, one cannot simply require $\min_{i=1,\ldots,d} n_i \to \infty$ as most of the limit theorems for random fields do (see e.g.~\citep{bierme14invariance,dedecker01exponential,lavancier07invariance}). Instead, one needs to investigate 
\begin{equation}\label{eq:E'}
S_{n}^{E'}(\vec t):=\sum_{\vec j\in R(n^{E'}\vec 1,\vec t)}X_{\vec j}
\end{equation}
 with a diagonal matrix
$E'=\mbox{diag}(\beta_1,\ldots,\beta_d)$. 

Our main result, Theorem~\ref{thm:main}, reveals the following surprising phenomenon. For different $E'$, the limiting random field may not be the same. However, in the special case with $E' =c E$ for some $c>0$, the dependence structure of the limiting random field is determined by the measure $\nu$. This case is referred to as the critical regime. For the non-critical regime, one can still obtain invariance principles under different normalizations depending on both $E$ and $E'$, although the limiting random field has degenerate dependence structure (either invariant, i.e.~completely dependent, or independent increments) along at least one direction. Below we briefly summarize the phenomenon of critical regime. 
To the best of our knowledge, the existence of such a critical regime has been rarely seen in the literature, except for the recent results by 
\citet{puplinskaite13aggregation,puplinskaite15scaling}. They investigated invariance principles for a different random field model in dimension 2, and referred to the same phenomenon as the scaling-transition phenomenon.

\medskip

\noindent{\it Critical regime:} Here we refer to the case of taking $E' = E$ in~\eqref{eq:E'}. 
\begin{theorem}\label{thm:critical}
Assume that $\mu\in\D(E,\nu)$ for some $E = {\rm diag}(1/\alpha_1,\dots,1/\alpha_d)$ with $\alpha_i\in(0,1), i=1,\dots,d$, and a probability measure $\nu$ on $\R_+^d$.  Assume $\alpha_1<1/2$ if $d=1$. Let $\psi$ be the characteristic function of $\nu$. Then,
\[
\left(\frac{S_n^{E}(\vec t) - \E S_n^E(\vec t)}{n^{1+q(E)/2}}\right)_{\vec t\in[0,1]^d}\weakto (W(\vec t))_{\vec t\in[0,1]^d},
\]
in the space $D([0,1]^d)$, 
where the limit Gaussian random field $(W(\vec t))_{\vec t \in\R^d}$ has zero-mean and  covariance function 
\[
\Cov(W(\vec t),W(\vec s))
= {\sigma_X^2}\int_{\R^d}\prod_{k=1}^d\frac{\left(e^{it_k y_k}-1\right)\overline{\left(e^{is_k y_k}-1\right)}}{2\pi|y_k|^2}|\log\psi(\vec y)|^{-2} d\vec y,\vec t,\vec s\in\R^d,
\]
where an explicit expression of $\sigma_X^2$ is given in~\eqref{eq:sigma} below.
\end{theorem}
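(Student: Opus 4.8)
\emph{Proof strategy.} The plan is to condition on the random graph $\G_\mu$ and apply a conditional central limit theorem. For a vertex $\vec j$ write $C_{\vec j}$ for its component in $\G_\mu$, and for a component $C$ put $N_C(\vec t):=\#\spp{C\cap R(n^E\vec1,\vec t)}$. Conditionally on $\G_\mu$ the component labels $(X_C)_C$ are i.i.d.\ with law $(1-p)\delta_{-1}+p\delta_1$ and independent of $\G_\mu$, so $\E[X_{\vec j}\mid\G_\mu]=\E X$ is deterministic and
\[
S_n^E(\vec t)-\E S_n^E(\vec t)=\sum_{C}(X_C-\E X)\,N_C(\vec t),
\]
which, conditionally on $\G_\mu$, is a sum of independent, centered, uniformly bounded random variables indexed by the components that meet $R(n^E\vec1,\vec t)$. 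It therefore suffices to establish: \textbf{(i)} convergence in probability of the conditional covariances,
\[
\frac1{n^{2+q(E)}}\,\Cov\!\spp{S_n^E(\vec t),S_n^E(\vec s)\mid\G_\mu}\ \conv{\proba}\ \Gamma(\vec t,\vec s):=\Cov(W(\vec t),W(\vec s)),
\]
for all $\vec t,\vec s\in[0,1]^d$; \textbf{(ii)} a conditional Lindeberg condition, for which it is enough that $\max_C N_C(\vec t)=o_{\proba}\spp{n^{1+q(E)/2}}$; and \textbf{(iii)} tightness of the normalized process in $D([0,1]^d)$. Indeed, granting \textbf{(i)}--\textbf{(ii)}, the Lindeberg CLT applied conditionally on $\G_\mu$ to an arbitrary linear combination $\sum_a\lambda_a S_n^E(\vec t_a)$ shows that the conditional characteristic function converges in probability, hence (being bounded) in expectation, to $\exp\!\spp{-\tfrac12\sum_{a,b}\lambda_a\lambda_b\Gamma(\vec t_a,\vec t_b)}$; this gives convergence of the finite-dimensional distributions to $W$, and together with \textbf{(iii)} proves the theorem.

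\emph{Step 1: the limit covariance.} Since $\E[S_n^E(\vec t)\mid\G_\mu]=\E X\cdot\#R(n^E\vec1,\vec t)$ is deterministic, the law of total covariance gives $\E\!\bb{\Cov(S_n^E(\vec t),S_n^E(\vec s)\mid\G_\mu)}=\Cov(S_n^E(\vec t),S_n^E(\vec s))$, which by the structure of the model equals $\sum_{\vec i,\vec j}\Cov(X_{\vec i},X_{\vec j})=\Var(X_{\vec0})\sum_{\vec i,\vec j}\proba(\vec i\sim\vec j)$, the sums running over $\vec i\in R(n^E\vec1,\vec t)$ and $\vec j\in R(n^E\vec1,\vec s)$, where $\proba(\vec i\sim\vec j)$ is the two-point connection probability, a translation-invariant function of $\vec i-\vec j$. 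I would rewrite this double sum, via Parseval's identity, as an integral over $[-\pi,\pi]^d$ of (the product of the Dirichlet kernels of the two rectangles) times (the Fourier transform of the connection function); the latter is governed by the Green's function of the random walk with step law $\mu$, whose transform is $\spp{1-\widehat\mu(\vec y)}^{-1}$. After the change of variables $\vec y=n^{-E}\vec y'$ (Jacobian $n^{-q(E)}$), the rescaled Dirichlet factors converge to $\prod_k(e^{it_ky'_k}-1)/(iy'_k)$ and $\prod_k(e^{is_ky'_k}-1)/(iy'_k)$ up to the normalizations $\prod_k n^{1/\alpha_k}$, while the hypothesis $\mu\in\D(E,\nu)$ yields $n\spp{1-\widehat\mu(n^{-E}\vec y')}\to-\log\psi(\vec y')$, so that the rescaled transform of the connection function tends to $|\log\psi(\vec y')|^{-2}$ up to a combinatorial constant; collecting the powers of $n$ and absorbing this constant together with $\Var(X_{\vec0})$ into $\sigma_X^2$ produces $\Gamma(\vec t,\vec s)$ after dividing by $n^{2+q(E)}$. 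The analytic content is the passage to the limit under the integral: one needs an integrable majorant on $[-\pi,\pi]^d$, built from the elementary bound that the $k$-th Dirichlet factor has modulus at most a constant times $\min(n^{1/\alpha_k},|y_k|^{-1})$ and from a quantitative lower bound on $|1-\widehat\mu|$ (away from $\vec0$, and near $\vec0$ inherited from $\mu\in\D(E,\nu)$), the limiting integrand being integrable precisely because $q(E)>2$.

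\emph{Step 2: concentration and the Lindeberg condition.} Writing $\Cov(S_n^E(\vec t),S_n^E(\vec s)\mid\G_\mu)=\Var(X_{\vec0})\sum_{\vec i,\vec j}\ind\ccbb{\vec i\sim\vec j}$, its variance is a sum over quadruples $(\vec i,\vec j,\vec k,\vec l)$ of the four-point correlations $\proba(\vec i\sim\vec j,\ \vec k\sim\vec l)-\proba(\vec i\sim\vec j)\proba(\vec k\sim\vec l)$; I would bound these through the coalescence structure of $\G_\mu$, namely that, conditionally on the backward ancestral rays of $\vec i$ and $\vec j$, the event $\ccbb{\vec k\sim\vec l}$ is independent of that conditioning unless one of those rays is approached by the rays of $\vec k$ or $\vec l$, and a union bound over the few possible meeting vertices, again controlled by the Green's function, shows that the quadruple sum is of smaller order than $\spp{\sum_{\vec i,\vec j}\proba(\vec i\sim\vec j)}^2=O\!\spp{n^{2(2+q(E))}}$; this gives \textbf{(i)}. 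For \textbf{(ii)}, one has $N_{C_{\vec0}}(\vec t)=\sum_{\vec j\in R(n^E\vec1,\vec t)}\ind\ccbb{\vec j\sim\vec0}$ with $\E N_{C_{\vec0}}(\vec t)\asymp n^2$; a moment estimate $\E\!\bb{N_{C_{\vec0}}(\vec t)^k}=O(n^{2k})$ (from $k$-fold use of the Green's function bound) together with a union bound over the $\asymp n^{q(E)}$ vertices of the rectangle yields $\proba\!\spp{\max_C N_C(\vec t)>\varepsilon n^{1+q(E)/2}}=O\!\spp{n^{q(E)-(q(E)/2-1)k}}\to0$ for $k$ large enough, using again $q(E)>2$ --- the very hypothesis that guarantees infinitely many components, all of which must be accounted for here.

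\emph{Step 3: tightness, and the main obstacle.} Tightness in $D([0,1]^d)$ would follow from a Bickel--Wichura-type moment bound on the increment of $S_n^E$ over a sub-block $\prod_k(s_k,t_k]$: conditionally on $\G_\mu$ such an increment is once more a sum of independent bounded variables, so its fourth moment is at most $3\spp{\Var(\cdot\mid\G_\mu)}^2$ plus a constant times $\sum_C(N_C^{\mathrm{block}})^4$, and taking expectations and invoking Steps~1--2 yields, after normalization, a bound of the form $C\prod_k(t_k-s_k)^{1+\delta}$; since $S_n^E$ vanishes whenever some $t_k=0$, tightness follows. I expect \textbf{Step 2}, and specifically the four-point connection estimate, to be the main obstacle: it requires quantitative control of the joint behaviour of three or more coalescing backward random walks in $\Z^d$, which is substantially more delicate than the two-walk computation behind the covariance, and it is there that $q(E)>2$ has to be exploited with care; the uniform tail control of component sizes needed for \textbf{(ii)} and for tightness is the second genuinely technical ingredient.
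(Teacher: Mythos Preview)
Your strategy of conditioning on $\G_\mu$ and applying a Lindeberg CLT to the component-indexed sum is natural, but it is genuinely different from the paper's route, and the difference matters. The paper never attempts to control the random quantities $\sum_{\vec i,\vec j}\ind\{\vec i\sim\vec j\}$ or $\max_C N_C$; instead it proves (Lemmas~\ref{lem:Delta}--\ref{lem:linproc}, Proposition~\ref{prop:linearProc}) that the centered field is an \emph{exact} linear random field
\[
X_{\vec k}-\E X_{\vec k}=\sum_{\vec j}q_{\vec k-\vec j}X^*_{\vec j},\qquad X^*_{\vec j}=X_{\vec j}-\E(X_{\vec j}\mid\sigma_{\vec j}),
\]
with martingale-difference innovations $X^*_{\vec j}$ relative to the lexicographic filtration. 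From this it applies McLeish's martingale CLT (Theorem~\ref{thm:McLeish}) to $\sum_{\vec j}c_{n,\vec j}X^*_{\vec j}$; the only structural condition needed is $\sup_{\vec j}|c_{n,\vec j}|/\|c_n\|\to0$, which follows from an elementary translation argument (Lemmas~\ref{lem:trans-sup} and~\ref{lem:translation}). The covariance computation (Lemma~\ref{lem:cov}) is then exactly your Step~1, and tightness is a one-line application of Burkh\"older's inequality to the martingale-difference sum. Theorem~\ref{thm:critical} is literally the case $E'=E$ of Theorem~\ref{thm:main}: then $\gamma_0=1$, $I_<=I_>=\emptyset$, $E''=E$, and the normalization reads $n^{1+q(E)/2}$.

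The point where your approach has a real gap is precisely where you flag it: Step~2. The concentration statement (i) requires a four-point bound of the form
\[
\Var\Bigl(\sum_{\vec i,\vec j\in R}\ind\{\vec i\sim\vec j\}\Bigr)=o\bigl(n^{2(2+q(E))}\bigr),
\]
and your outline (``union bound over the few possible meeting vertices'') does not furnish it: the events $\{\vec i\sim\vec j\}$ and $\{\vec k\sim\vec l\}$ are correlated through arbitrarily long portions of the four ancestral lines, not through a single meeting vertex, and a naive union bound over coalescence points gives a sum that is not obviously smaller than the square of the mean. Likewise, the asserted moment bound $\E[N_{C_{\vec 0}}^k]=O(n^{2k})$ \emph{is} a $(k+1)$-point connection estimate, and your ``$k$-fold use of the Green's function bound'' is exactly the step that would need to be justified; it does not reduce to iterated two-point bounds without further argument. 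These estimates may well be provable, but they are the substantive content of your proof, and you have not supplied them. The paper's martingale representation sidesteps all of this: orthogonality of the $X^*_{\vec j}$ makes the conditional-variance verification (Lemma~\ref{lem:ergodicity}) a second-moment computation involving only the decay of $\Cov(X^{*2}_{\vec i},X^{*2}_{\vec j})$, which in turn reduces to the two-point estimate of Lemma~\ref{lem:probainter}. That is the real gain of the linear-field representation over your graph-conditioning approach.
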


The limit Gaussian random field is easily seen to be $(E,H)$-operator-scaling with $H = 1+q(E)/2$.
For this new class of random fields, we study its increments and the H\"older regularity of the sample paths in Section \ref{sec:property}. 

\medskip

\noindent{\it Non-critical regime:}
For the case $E'$ in~\eqref{eq:E'} is not a multiple of $E$, the situation becomes much more subtle. One can still obtain invariance principles with appropriate normalization depending on both $E$ and $E'$. However, in the non-critical regime the limiting random fields 
no longer reflects fully the long-range dependence inherited from $\G_\mu$. In particular,
 the covariance function of the limiting random field becomes degenerate in certain directions: along these directions, the covariance function becomes the one of a fractional Brownian motion with either $H = 1/2$ (the standard Brownian motion, which is memoryless) or $H = 1$ (the case of complete dependence with $W(t) = tZ, t\ge 0$ for a common standard Gaussian random variable $Z$). Accordingly, along these directions the increments of the Gaussian random fields are independent or translation invariant, respectively. A general invariance principle is established in Section~\ref{sec:WIP}, and properties of the limiting random fields are investigated in Section~\ref{sec:property}. Here we only state the invariance principle for $d=2$. In the non-critical regime, the limit Gaussian random field is a fractional Brownian sheet with Hurst indices $H_1$ and $H_2$. 
 However, we do not see fractional Brownian sheets in the limit in high dimensions most of the time. A complete characterization of when fractional Brownian sheets arise is given in Proposition~\ref{prop:fBs} below.

\begin{theorem}\label{thm:non-critical}
Assume $d=2$. Let $\mu\in\D(E,\nu)$ with $E = {\rm diag}(1/\alpha_1,1/\alpha_2)$ and set $E' = {\rm diag}(1/\alpha_1,1/\alpha_2')$ with $\alpha_1,\alpha_2\in(0,1), \alpha_2\neq\alpha_2'$. Then, depending on the relation between $\alpha_1, \alpha_2$ and $\alpha_2'$, the following weak convergence may hold:
\[
\left(\frac{S_n^{E'}(\vec t) - \E S_n^{E'}(\vec t)}{n^{\beta}}\right)_{\vec t\in[0,1]^2}\weakto (W(\vec t))_{\vec t\in[0,1]^2},
\]
in the space $D([0,1]^2)$, 
where the limit Gaussian random field $(W(\vec t))_{\vec t \in\R^2}$ has zero-mean and  covariance function in form of
\equh\label{eq:product2}
\Cov(W(\vec t),W(\vec s)) = \sigma_X^2\sigma^2\Cov(B_{H_1}(t_1),B_{H_1}(s_1))\Cov(B_{H_2}(t_2),B_{H_2}(s_2)).
\eque
Here, $\beta,\sigma^2,H_1,H_2$ and hence $\{W(\vec t)\}_{\vec t\in[0,1]^d}$ all depend on $\alpha_1,\alpha_2$ and $\alpha_2'$.
In particular, there are four different possibilities as follows:

\begin{enumerate}[(i)]
\item \label{d=2:1}$\alpha_2'>\alpha_2, \alpha_2\in(0,1/2)$: $\beta = \frac{\alpha_2}{\alpha_2'}+\frac12\spp{\frac1{\alpha_1}+\frac1{\alpha_2'}}$, $H_1 = \frac12$, $H_2 = \frac12+\alpha_2$.
\item \label{d=2:2}$\alpha_2'>\alpha_2, \alpha_2\in(1/2,1)$: $\beta = 1+\frac1{2\alpha_1}+\frac1{\alpha_2'}-\frac1{2\alpha_2}, H_1=\frac12+\alpha_1\spp{1-\frac1{2\alpha_2}}$, $H_2 = 1$.
\item \label{d=2:3}$\alpha_2'<\alpha_2, \alpha_1\in(0,1/2)$: $\beta = 1+\frac12\spp{\frac1{\alpha_1}+\frac1{\alpha_2'}}$, $H_1 = \frac12+\alpha_1$, $H_2 = \frac12$.
\item \label{d=2:4}$\alpha_2'<\alpha_2, \alpha_1\in(1/2,1)$:  $\beta = \frac{\alpha_2}{\alpha_2'}\spp{1-\frac1{2\alpha_1}}+\frac1{\alpha_1}+\frac1{2\alpha_2'}$, $H_1 = 1$, $H_2 = \frac12+\alpha_2\spp{1-\frac1{2\alpha_1}}$.
\end{enumerate}
\end{theorem}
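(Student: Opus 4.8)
The plan is the standard two-step route: convergence of the finite-dimensional distributions, then tightness in $D([0,1]^2)$. For the finite-dimensional convergence I would reuse the conditional central limit theorem behind Theorem~\ref{thm:critical}. Conditionally on the random graph $\G_\mu$ the centred field admits a linear representation $X_{\vec j}-\E X_{\vec j}=\sum_{\vec k\in\Z^2}a_{\vec j,\vec k}\,\varepsilon_{\vec k}$ in i.i.d.\ mean-zero innovations $(\varepsilon_{\vec k})$, the $\G_\mu$-measurable coefficients $a_{\vec j,\vec k}$ being indicators attached to the descendant subtrees of the forest; hence $S_n^{E'}(\vec t)-\E S_n^{E'}(\vec t)=\sum_{\vec k}A_{n,\vec k}(\vec t)\,\varepsilon_{\vec k}$ with $A_{n,\vec k}(\vec t)=\sum_{\vec j\in R(n^{E'}\vec 1,\vec t)}a_{\vec j,\vec k}$. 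One then checks (a) that the deterministic covariances $n^{-2\beta}\Cov(S_n^{E'}(\vec t),S_n^{E'}(\vec s))$ converge to the right-hand side of~\eqref{eq:product2}, and (b) that, conditionally on $\G_\mu$, the triangular array $(n^{-\beta}A_{n,\vec k}(\vec t))_{\vec k}$ obeys the Lindeberg condition and that its squared $\ell^2$-norm and the mixed inner products $n^{-2\beta}\sum_{\vec k}A_{n,\vec k}(\vec t)A_{n,\vec k}(\vec s)$ converge \emph{in probability} to the deterministic limit from (a); the conditional Gaussian limit is then also the unconditional one. Step (b) is, up to bookkeeping, handled as in the critical case once (a) is in hand, so the four regimes are entirely a phenomenon of the covariance asymptotics (a).

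For (a), since $X_{\vec j}=X_{\vec j'}$ exactly when $\vec j,\vec j'$ lie in the same component of $\G_\mu$, we have $\Cov(X_{\vec j},X_{\vec j'})=\sigma_X^2\,p(\vec j-\vec j')$ with $p(\vec h):=\P(\vec 0\leftrightarrow\vec h)$ the probability that the two ``southwest'' $\mu$-paths issued from $\vec 0$ and $\vec h$ ever meet, so $\Cov(S_n^{E'}(\vec t),S_n^{E'}(\vec s))=\sigma_X^2\sum_{\vec j\in R(n^{E'}\vec 1,\vec t)}\sum_{\vec j'\in R(n^{E'}\vec 1,\vec s)}p(\vec j-\vec j')$. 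The key input, coming from two-dimensional (infinite-mean) renewal theory together with the local limit theorem available since $\mu\in\D(E,\nu)$, is a sharp asymptotic for $p(\vec h)$ as $\|\vec h\|\to\infty$: it is comparable to an explicit function that is $\lambda^E$-homogeneous of degree $2-q(E)$ and direction-dependent (away from, versus along, the $E$-coordinate axes), and whose discrete Fourier transform — the spectral density of the stationary field $(X_{\vec j})$ — is proportional to $|\log\psi(\vec y)|^{-2}$, the kernel appearing in Theorem~\ref{thm:critical}. One then rescales the summation indices by $n^{E'}$ and recognises a Riemann sum; because $E'\neq E$, rescaling by $n^{E'}$ sends this $\lambda^E$-homogeneous kernel not to a nondegenerate limit but either to a kernel concentrated on a coordinate ``diagonal'' or to one that is constant along a coordinate.

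Tracking this degeneration yields the cases. Since $E$ and $E'$ differ only in the second slot, relative to the $E$-aspect ratio the $E'$-rectangle is, according to $\mathrm{sgn}(\alpha_2'-\alpha_2)$, ``too long'' in one coordinate and ``too short'' in the other. The short coordinate behaves like a one-dimensional Hammond--Sheffield model with its own parameter $\alpha\in\{\alpha_2,\alpha_1\}$: if $\alpha<\tfrac12$ the associated one-dimensional coalescence sum converges, only the surviving homogeneous slice of the kernel contributes after normalization, and that coordinate carries a fractional Brownian factor of index $\tfrac12+\alpha$; if $\alpha>\tfrac12$ the one-dimensional model is in its ``connected'' regime and the factor is completely dependent, $H=1$. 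The long coordinate is the residual: when the short parameter is $<\tfrac12$ its rescaled kernel has a non-integrable diagonal singularity, only near-diagonal pairs survive the normalization, and it contributes a Brownian factor, $H=\tfrac12$; when the short parameter is $>\tfrac12$ it retains a fractional Brownian factor with the reduced index displayed in cases~\itref{d=2:2} and~\itref{d=2:4}. Matching the orders of magnitude of the surviving factors produces $\beta$ and $\sigma^2$, and the two binary choices — the sign of $\alpha_2'-\alpha_2$, and whether the short parameter is below or above $\tfrac12$ — give precisely \itref{d=2:1}--\itref{d=2:4}. Finally, tightness follows from a fourth-moment estimate $\E|S_n^{E'}(\Delta)|^4\le C\,n^{4\beta}|I_1|^{4H_1}|I_2|^{4H_2}$ for the increment over a sub-rectangle $\Delta=I_1\times I_2$ (so that $4H_1,4H_2>1$ makes a standard moment criterion for tightness in $D([0,1]^2)$ applicable), which via the linear representation reduces to bounding the second moment of the conditional variance by four-point coalescence probabilities in $\G_\mu$.

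The main obstacle is the directional asymptotic for $p(\vec h)$ with error terms uniform and summable enough to pass to the limit inside a doubly indexed sum whose number of terms is of order $n^{2q(E')}$: this needs a genuinely two-dimensional version of the Hammond--Sheffield coalescence estimate (two ``southwest'' $\mu$-renewal paths meeting, rather than a single difference walk returning to the origin, which is delicate because the two coordinate renewal processes and the within-path coordinate increments are all coupled through $\mu$) together with a quantitative local limit theorem under $\mu\in\D(E,\nu)$ — precisely where the \emph{strict} domain of normal attraction hypothesis is used. A secondary difficulty is carrying out the degeneracy bookkeeping carefully enough that step (b) still closes when the limit field has an $H=\tfrac12$ or $H=1$ coordinate, i.e.\ verifying that in the ``Brownian'' coordinate the rescaled coefficients $n^{-\beta}A_{n,\vec k}(\vec t)$ do not concentrate on $o(n^{2\beta})$ sites, so that Lindeberg and the in-probability convergence of the conditional variance both persist.
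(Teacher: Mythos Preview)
Your strategy is genuinely different from the paper's and, as written, has a real gap.

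The paper does not attack Theorem~\ref{thm:non-critical} directly. It first proves a single general invariance principle (Theorem~\ref{thm:main}) valid for an arbitrary diagonal $E'$, and Theorem~\ref{thm:non-critical} is then a short corollary: in each of the four cases one identifies the partition $I_<,I_=,I_>$ of $\{1,2\}$ determined by the ratios $\rho_k=\alpha_k/\alpha_k'$ and the critical level $\gamma_0$, reads off $\beta=\gamma_0+q(E')-q(E'')/2$ from the general formula, and observes that $|I_=|=1$, so that by Proposition~\ref{prop:fBs} the limit covariance factorises into the fractional Brownian sheet form~\eqref{eq:product2} with the stated $H_1,H_2,\sigma^2$. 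The machinery behind Theorem~\ref{thm:main} is also not what you sketch: the paper does not condition on $\G_\mu$ and does not use the component indicators. It uses the deterministic-coefficient linear representation $X_{\vec k}-\E X_{\vec k}=\sum_{\vec j}q_{\vec k-\vec j}X_{\vec j}^*$ with $q_{\vec j}=\P(\vec 0\in A_{\vec j})$ and \emph{martingale-difference} innovations $X_{\vec j}^*=X_{\vec j}-\E(X_{\vec j}\mid\sigma_{\vec j})$ (Section~\ref{randomfield}), together with McLeish's CLT (Theorem~\ref{thm:McLeish}). The covariance asymptotics are computed purely on the Fourier side: by Parseval, $\langle b_n(\vec t),b_n(\vec s)\rangle$ is an integral of $|Q(\vec x)|^2$ against a product of Dirichlet kernels, Lemma~\ref{Qhomogene} gives $|Q(\vec x)|=g(\vec x)/|\log\psi(\vec x)|$ with $g$ continuous and $g(\vec 0)=1$, and after the change of variables $\vec y=n^{E''}\vec x$ one applies dominated convergence (Lemma~\ref{lem:cov}). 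The four regimes arise mechanically from whether $\gamma_k=1$ or $\gamma_k>1$ in the Dirichlet-kernel limits, not from physical-space asymptotics of the coalescence probability.

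This is precisely where your proposal has a gap. You propose to get the covariance limit via a pointwise, direction-dependent asymptotic for $p(\vec h)=\P(A_{\vec 0}\cap A_{\vec h}\neq\emptyset)$ and a Riemann-sum passage, and you yourself flag that ``the main obstacle is the directional asymptotic for $p(\vec h)$ with error terms uniform and summable enough''. That obstacle is genuine and you do not resolve it: obtaining such asymptotics uniformly in the direction of $\vec h$, with control good enough to sum over $\sim n^{2q(E')}$ pairs while the kernel is only $\lambda^E$-homogeneous of degree $2-q(E)<0$, would require a quantitative two-dimensional local/renewal estimate that is nowhere supplied. The Fourier route bypasses all of this: Parseval turns the double sum into a single integral, the $E$-homogeneity of $\log\psi$ makes the rescaling exact, and one needs only continuity of $g$ at $\vec 0$ plus an integrable majorant (Lemma~\ref{integrabilite} and the elementary Dirichlet bounds). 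A secondary gap is your step~(b): the claim that the \emph{random} conditional variances $n^{-2\beta}\sum_{\vec k}A_{n,\vec k}(\vec t)A_{n,\vec k}(\vec s)$ converge in probability to the deterministic limit is not ``bookkeeping'' --- it is exactly the kind of ergodic-type statement (cf.\ Lemma~\ref{lem:ergodicity}) that the paper handles via the $X_{\vec j}^*$ representation and the asymptotic negligibility condition~\eqref{eq:sup}, and for which your component-indicator coefficients (which are $\G_\mu$-measurable, not deterministic) would need a separate argument. Tightness in the paper is likewise obtained from a $p$-th moment bound via Burkholder on the martingale-difference sum, not from four-point coalescence probabilities.
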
 
The explicit expressions of $\sigma^2$ in these cases can be found in the proof of Theoroem~\ref{thm:non-critical} in Section~\ref{sec:property}.

The main result of the paper, Theorem~\ref{thm:main}, is a unified version of invariance principles for general $d\in\N$, $E = {\rm diag}(1/\alpha_1,\dots,1/\alpha_d)$ and arbitrary $E'$, from which both Theorems~\ref{thm:critical} and~\ref{thm:non-critical} follow as immediate corollaries. Theorem~\ref{thm:main} also provides a general principle to determine the correct normalization order, the limit covariance function, and hence the directions of degenerate  dependence. We have just seen that in dimension 2 there are already 4 different non-critical regimes. For general $d\geq 3$, the situation becomes more complicated.

The core of the proofs is an application of the martingale central limit theorem, thanks to the key observation that the random field of interest can be represented as a linear random field in form of
\begin{equation}\label{eq:linear}
X_{\vec i} = \sum_{\vec j\in\Z^d}q_{\vec j}X_{\vec i-\vec j}^*,\vec i\in\Z^d,
\end{equation}
of which the innovations $(X_{\vec j}^*)_{\vec j\in\Z^d}$ are multiparameter martingale differences.
\citet{hammond13power} also made essential use of the martingale central limit theorem, although the representation as a linear process as in~\eqref{eq:linear} was not explicit. 
This representation plays a key role in our proofs, as from there
 when verifying conditions in the martingale central limit theorem, thanks to the structure of the linear process, we can deal with the coefficients $q_{\vec j}$ and innovations $X_{\vec j}^*$ separately. This framework, or more generally the martingale approximation method, has been carried out  successfully in dimension one to establish invariance principles for fractional Brownian motions for general stationary processes \citep{dedecker11invariance}.  To extend this framework to high dimensions, a notorious difficulty is to find a convenient multiparameter martingale to work with. It is well known that the martingale approximation method applied to stationary random fields is not as powerful as to stationary sequences, as pointed out long time ago by \citet{bolthausen82central}. Fortunately, our specific model can be represented exactly as a simple linear random field with martingale-difference innovations as in~\eqref{eq:linear}. 
 
 Once the representation of linear random fields in~\eqref{eq:linear} is established, the main work lies in the computation of the limit of the covariance functions. This step is heavily based on the analysis of Fourier transforms of the linear coefficients $(q_{\vec i})_{\vec i\in\Z^d}$, the asymptotic property of which is essentially determined by $\nu$. Analyzing the Fourier transforms is a standard tool to compute the covariance functions for stationary linear random fields, see for example \citep{lavancier07invariance,puplinskaite13aggregation,puplinskaite15scaling}. To complete the invariance principle, the tightness is established. At last, to develop the sample-path properties we apply recent results in \citet{bierme09holder}.

The rest of the paper is organized as follows. In Section~\ref{sec:model} we describe in details the random-field model. Section~\ref{sec:CLT} provides a general central limit theorem that serves our purpose. Section~\ref{sec:WIP} establishes a general invariance principle that applies to both critical and non-critical regimes. 
Some  properties of the limit random fields are provided in Section~\ref{sec:property}.

\subsection*{Acknowledgement}
The third author would like to thank the hospitality of 
Laboratoire de Math\'ematiques et Physique Th\'eorique, UMR-CNRS 7350, Tours, France, during his visit from April to July in 2014, when the main part of this project was accomplished. The third author's research was partially supported by NSA grant H98230-14-1-0318.

\section{The model}\label{sec:model}
In this section, we will give a detailed description of our random field model $\{X_{\vec i}\}_{\vec i\in\Z^d}$, of which the dependence structure is determined by an underlying random graph ${\mathcal G}_\mu$. The asymptotic properties of the random graph is determined by a probability measure $\mu$ on $\{1,2,\dots\}^d$, which is assumed to be in the strict domain of normal attraction of an $E$-operator stable measure $\nu$ on $\mathbb R_+^d$. Some simple properties of the model will be derived. In particular, we show that the random field of interest can be represented as a linear random field, of which the innovations are stationary multiparameter martingale differences. 

 Throughout the paper we use the following usual notations.
Let $d\ge 1$ be an integer. On $\R^d$, we consider the partial order (also denoted by $<$) defined by ${\vec t}<{\vec s}$ if 
$t_j<s_j$ for all $j=1,\ldots,d$, where ${\vec t}=(t_1,\ldots,t_d)$ and ${\vec s}=(s_1,\ldots,s_d)$. In the same way, we use the notations $>$, $\le$, $\ge$.
We write ${\vec t}\nless{\vec s}$ as soon as $t_j\ge s_j$ for at least one $j=1,\ldots,d$, and in the same way, we use $\ngtr$, $\nleq$, $\ngeq$.
We denote by $[{\vec t},{\vec s}]$ the set $[t_1,s_1]\times\cdots\times[t_d,s_d]$ and
we write $|{\vec t}|_\infty$ for $\max\{|t_j|,\; j=1,\ldots,d\}$, and $|{\vec t}|_1$ for $\sum_{j=1}^d|t_j|$. Furthermore, write $\N = \{0,1,\dots\}$ and $\N_*=\{1,2,\dots\}$.

\subsection{The random graph}\label{randomgraph}\

Let $\mu$ be a probability measure on $\N_*^d$ such that the additive group generated by the support of $\mu$ is all $\Z^d$ (we say that $\mu$ is aperiodic).
On $\Z^d$, we consider the random directed graph $\G_\mu$, associated to $\mu$, defined as follows:
\begin{itemize}
 \item Let $({\vec Z}_{\vec n})_{\vec n\in\Z^d}$ be i.i.d.\ random variables with distribution $\mu$.
 \item For each $\vec n\in\Z^d$, let $e_{\vec n}$ be the outward edge from $\vec n$ to ${\vec n}-{\vec Z}_{\vec n}$.
 \item $\G_\mu$ is the graph with all sites of $\Z^d$ as vertices and random directed edges $\{e_{\vec n},\, \vec n\in\Z^d\}$.
\end{itemize}

The graph $\G_\mu$ is then composed of (possibly) several disconnected components and each component is a tree. The upcoming Proposition~\ref{prop1} shows that, almost surely, the number of components of $\G_\mu$ is one or is infinite.
 
We first introduce the following notations. For ${\vec n}\in\Z^d$, we denote by $A_{\vec n}$ the ancestral line of ${\vec n}$, that is the set of all elements ${\vec k}\in\Z^d$ for which there exists a directed connection from ${\vec n}$ to ${\vec k}$ (taking the orientations of the edges into account). Note that, in distribution, $A_{\vec n}$ can be described by the range of the random walk $({\vec n}-{\vec S}_{k})_{k\ge 0}$ where $({\vec S}_k)_{k\ge 0}$ is the random walk starting at ${\vec 0}$ with step distribution $\mu$. In particular, since $\mu$ is supported by $\N_*^d$, any element $\vec k$ in $A_{\vec n}$ satisfies ${\vec k}<{\vec n}$.
Observe that the condition that the support of $\mu$ generates the group $\Z^d$ is equivalent to the fact that $\P(A_{\vec n}\cap A_{\vec m}\ne \emptyset)>0$ for all ${\vec n}$, ${\vec m}\in\Z^d$.

For ${\vec n}\in\Z^d$, we set $q_{\vec n}=\P({\vec 0}\in A_{\vec n})$. We clearly have $q_{\vec n}=0$ as soon as $\vec 0\nless\vec n$, except for $q_{\vec0}=1$. Further, since each edge is generated independently at each site, for any ${\vec n}$, ${\vec k}\in\Z^d$, $$\P({\vec k}\in A_{\vec n})=q_{{\vec n}-{\vec k}}.$$

\begin{proposition}\label{prop1}
If $\sum_{\vec k\in\N^d}q_{\vec k}^2$ converges, then $\G_\mu$ has almost surely infinitely many components whereas if  $\sum_{\vec k\in\N^d}q_{\vec k}^2$ diverges, then $\G_\mu$ has almost surely only one component.
\end{proposition}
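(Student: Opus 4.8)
The plan is to translate ``$\vec x$ and $\vec y$ lie in the same component'' into an intersection property of two independent random walks, and then to treat the two cases by a first‑moment (resp.\ second‑moment/regeneration) argument. The basic dictionary is: $\vec x$ and $\vec y$ belong to the same component of $\G_\mu$ if and only if their ancestral lines meet, and revealing $A_{\vec 0}$ and then growing the ancestral line of $\vec n$ with fresh i.i.d.\ edges (until it hits $A_{\vec 0}$, if ever) gives
\[
\P(\vec 0\sim\vec n)=\P\bigl(\{-\vec S_k\}_{k\ge0}\cap\{\vec n-\vec T_l\}_{l\ge0}\neq\emptyset\bigr),
\]
where $(\vec S_k)$ and $(\vec T_l)$ are two \emph{independent} $\mu$-walks started at $\vec 0$. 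Since $\mu$ is supported on $\N_*^d$ these walks are strictly increasing in each coordinate, so each site is visited at most once, $\P(\vec a\in\{\vec S_k\}_k)=q_{\vec a}$, and by independence the expected number of common points of the two ranges equals $\sum_{\vec w}q_{\vec w}q_{\vec w+\vec n}$. I will also use that, the environment $(\vec Z_{\vec m})_{\vec m}$ being i.i.d., the number $K$ of components of $\G_\mu$ is a $\Z^d$-translation invariant random variable, hence almost surely equal to a constant in $\{1,2,\dots\}\cup\{\infty\}$.

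\emph{Case $\sum_{\vec k}q_{\vec k}^2<\infty$.} First I would show $\P(\vec 0\sim\vec n)\to0$ as $|\vec n|_\infty\to\infty$: bound it by $\sum_{\vec w}q_{\vec w}q_{\vec w+\vec n}$, split the sum according to $|\vec w|_\infty\le M$ or $>M$, control the far part by Cauchy--Schwarz and the tail of $\sum q_{\vec w}^2$, and the near part using $q_{\vec u}\to0$ as $|\vec u|_\infty\to\infty$ (itself a consequence of $\sum q_{\vec u}^2<\infty$). Next, letting $C_N$ be the number of components meeting $\Lambda_N:=[-N,N]^d\cap\Z^d$, one has $C_N\uparrow K$, and from the identity $C_N=\sum_{\vec v\in\Lambda_N}|\C(\vec v)\cap\Lambda_N|^{-1}$ and Jensen's inequality,
\[
\E[C_N]\ \ge\ \sum_{\vec v\in\Lambda_N}\frac1{\E|\C(\vec v)\cap\Lambda_N|}\ \ge\ \frac{|\Lambda_N|}{g(2N)},\qquad g(M):=\sum_{|\vec u|_\infty\le M}\P(\vec 0\sim\vec u).
\]
Since $\P(\vec 0\sim\vec u)\to0$, a weighted Cesàro estimate gives $g(M)=o(M^d)$, hence $\E[C_N]\to\infty$; combined with $C_N\uparrow K$ and $K$ a.s.\ constant, this forces $K=\infty$ a.s.

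\emph{Case $\sum_{\vec k}q_{\vec k}^2=\infty$.} Here I would prove the stronger statement that $\P(\vec 0\sim\vec n)=1$ for \emph{every} $\vec n\in\Z^d$; then $\sum_{\vec n}\P(\vec 0\not\sim\vec n)=0$ forces $\C(\vec 0)=\Z^d$ a.s., i.e.\ $K=1$. Fix independent $\mu$-walks $\vec S,\vec T$ from $\vec 0$ with ranges $R_S,R_T$. Whenever $R_S$ and $R_T$ share a point $\vec c$, the strong Markov property makes the continuations of both walks from $\vec c$ fresh and independent, so $R_S\cap R_T$ is the range of a (possibly terminating) renewal process; its expected cardinality is $\sum_{\vec k}q_{\vec k}^2=\infty$, which forces the renewal to be non‑terminating, i.e.\ $R_S\cap R_T$ is infinite a.s. Write $\vec 0<\vec c_1<\vec c_2<\cdots$ for its (totally ordered) points; each $\vec c_i$ is a regeneration time. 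Given $\vec n\in\Z^d$, use aperiodicity to write $\vec n=\vec a-\vec b$ with $\vec a,\vec b$ in the semigroup generated by $\mathrm{supp}\,\mu$, so that $q_{\vec a}q_{\vec b}>0$: the event that, restarting at $\vec c_i$, the $\vec T$-walk hits $\vec c_i+\vec a$ and the $\vec S$-walk hits $\vec c_i+\vec b$ has conditional probability $q_{\vec a}q_{\vec b}$ given the past and depends only on boundedly many fresh steps, so by a conditional Borel--Cantelli lemma it occurs for infinitely many $i$; on that event $(\vec c_i+\vec a)-(\vec c_i+\vec b)=\vec n$ lies in $R_T-R_S$, which is exactly $\{\vec 0\sim\vec n\}$.

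The ingredients of the first case (the Cauchy--Schwarz and Cesàro estimates, the box‑counting identity) are routine. The delicate point, which I expect to be the main obstacle, is the second case: passing from ``$\P(\vec 0\sim\vec n)>0$'' — which follows already from the second‑moment bound, or just from $R_S\cap R_T$ being infinite — to ``$\P(\vec 0\sim\vec n)=1$'' genuinely requires the regenerative structure of the common points of the two walks. One must be careful to thin the common points along stopping times, to choose the conditioning filtration so that the conditional Borel--Cantelli lemma applies cleanly, and to use aperiodicity to reduce an arbitrary $\vec n$ to a difference of two sites the walk can actually reach.
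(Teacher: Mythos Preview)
Your proposal is correct. In the divergent case your argument is essentially the paper's: both first show, via the renewal identity $\E|A_{\vec 0}\cap A'_{\vec 0}|=(1-\P(A_{\vec 0}\cap A'_{\vec 0}\neq\{\vec 0\}))^{-1}$, that two independent $\mu$-walks from the same point have infinitely many common points almost surely, and then use aperiodicity (writing $\vec n=\vec a-\vec b$ with $\vec a,\vec b$ in the semigroup of $\mu$) to upgrade this to $\P(\vec 0\sim\vec n)=1$ for every $\vec n$. Your conditional Borel--Cantelli at regeneration points is a more explicit version of the paper's rather terse final inference; the care you flag (thinning the regeneration times so the trial events depend on disjoint blocks of fresh increments) is exactly what is needed.

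In the convergent case you take a genuinely different route. The paper, after showing $\P(A_{\vec 0}\cap A_{\vec n}\neq\emptyset)\to 0$, constructs by hand a sequence $(\vec n_k)$ with $\P\bigl(A_{\vec n_k}\cap\bigcup_{j<k}A_{\vec n_j}\neq\emptyset\bigr)\le k^{-2}$ and invokes Borel--Cantelli to exhibit infinitely many mutually disjoint ancestral lines directly. You instead combine the same decay with a box-counting/Jensen estimate $\E[C_N]\ge |\Lambda_N|/g(2N)\to\infty$ and the fact that the number of components is an almost-sure constant (ergodicity of the i.i.d.\ environment under $\Z^d$-shifts). Both arguments are short and valid; the paper's is fully constructive and avoids any appeal to ergodicity, while yours trades that for a softer counting argument that would transfer more readily to other translation-invariant models.
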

We start by proving the following lemma.
\begin{lemma}\label{lem:probainter}
\begin{enumerate}[(i)]
 \item\label{lem:probainter:1}  If $\sum_{\vec k\in\N^d}q_{\vec k}^2$ converges then for all ${\vec n}\in\Z^d$, 
$$\P(A_{\vec0}\cap A_{\vec n}\ne \emptyset)=\left(\sum_{{\vec k}\in\N^d} q_{\vec k}^2\right)^{-1}\left(\sum_{{\vec k}\in\Z^d} q_{\vec k}q_{{\vec k}+{\vec n}}\right).$$
 \item\label{lem:probainter:2}  If $\sum_{{\vec k}\in\N^d}q_{\vec k}^2$ diverges then $\P(A_{\vec 0}\cap A_{\vec n}\ne \emptyset)=1$ for all ${\vec n}\in\Z^d$.
\end{enumerate}
\end{lemma}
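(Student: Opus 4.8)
The plan is to compute the intersection probability $\P(A_{\vec 0}\cap A_{\vec n}\neq\emptyset)$ via a second-moment / renewal argument, using the key representation of $A_{\vec n}$ as the range of the random walk $(\vec n - \vec S_k)_{k\ge 0}$. First I would introduce the occupation-type variables. Since $\P(\vec k\in A_{\vec n}) = q_{\vec n-\vec k}$, the expected size of the ancestral line is $\E|A_{\vec 0}| = \sum_{\vec k\in\Z^d}\P(\vec k\in A_{\vec 0}) = \sum_{\vec k\in\N^d}q_{\vec k}$; but more useful will be quantities counting pairs. The natural object is the number of common ancestors $N_{\vec n} := \sum_{\vec k\in\Z^d}\ind_{\{\vec k\in A_{\vec 0}\}}\ind_{\{\vec k\in A_{\vec n}\}}$, whose expectation, by independence of the two ancestral lines as events at a fixed $\vec k$ — wait, here is the subtlety: $A_{\vec 0}$ and $A_{\vec n}$ are \emph{not} independent, but $\{\vec k\in A_{\vec 0}\}$ and $\{\vec k\in A_{\vec n}\}$ depend on disjoint sets of edges \emph{once the paths have not yet met}. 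The clean way around this is the standard trick: condition on the \emph{first} common ancestor.

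\textbf{Main computation.} I would argue as follows. If $A_{\vec 0}\cap A_{\vec n}\neq\emptyset$, then because every ancestral line is totally ordered along the walk and both walks move by steps in $\N_*^d$ (hence are, in a suitable sense, coalescing once they meet — once the two ancestral walks hit a common site they coincide forever afterward), there is a well-defined first meeting point $\vec M$. Then for any $\vec k\in\Z^d$,
\[
\P(\vec k\in A_{\vec 0}\cap A_{\vec n}) \;=\; \P(\vec M \le \vec k,\ \vec k\in A_{\vec M}) \;=\; \sum_{\vec m}\P(\vec M = \vec m)\,q_{\vec m-\vec k},
\]
and summing over $\vec k$ and using $\sum_{\vec k}q_{\vec m-\vec k} = \sum_{\vec k\in\N^d}q_{\vec k} =: Q$ (which is $\le\infty$) gives $\E N_{\vec n} = \P(A_{\vec 0}\cap A_{\vec n}\neq\emptyset)\cdot Q$. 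On the other hand, computing $\E N_{\vec n}$ directly: $\E N_{\vec n} = \sum_{\vec k\in\Z^d}\P(\vec k\in A_{\vec 0})\P(\vec k\in A_{\vec n}) = \sum_{\vec k\in\Z^d}q_{-\vec k}q_{\vec n-\vec k} = \sum_{\vec k\in\Z^d}q_{\vec k}q_{\vec k+\vec n}$, where the factorization holds because, conditionally, the events $\{\vec k\in A_{\vec 0}\}$ and $\{\vec k\in A_{\vec n}\}$ each only involve the edges strictly above $\vec k$ along the respective path — more precisely, by a last-exit decomposition at $\vec k$: both events say "the walk from $\vec 0$ (resp.\ $\vec n$) reaches $\vec k$", and the walk from $\vec 0$ never revisits, so hitting $\vec k$ from $\vec 0$ is $\{\vec 0 - \vec S^{(1)}_j = \vec k \text{ some } j\}$, a function of the i.i.d.\ steps of walk 1 only, independent of walk 2. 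Hence $\E N_{\vec n} = \sum_{\vec k\in\Z^d}q_{\vec k}q_{\vec k+\vec n}$. Equating the two expressions for $\E N_{\vec n}$ yields, when $Q = \sum_{\vec k\in\N^d}q_{\vec k} < \infty$,
\[
\P(A_{\vec 0}\cap A_{\vec n}\neq\emptyset) \;=\; Q^{-1}\sum_{\vec k\in\Z^d}q_{\vec k}q_{\vec k+\vec n}.
\]

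\textbf{Reconciling $Q$ with $\sum q_{\vec k}^2$.} The statement of the lemma has $\sum_{\vec k\in\N^d}q_{\vec k}^2$ in the denominator, not $Q$. Taking $\vec n = \vec 0$ in the displayed identity gives $\P(A_{\vec 0}\cap A_{\vec 0}\neq\emptyset) = 1 = Q^{-1}\sum_{\vec k\in\Z^d}q_{\vec k}^2 = Q^{-1}\sum_{\vec k\in\N^d}q_{\vec k}^2$, i.e.\ $Q = \sum_{\vec k\in\N^d}q_{\vec k}^2$. So I would actually run the argument with $\vec n=\vec 0$ first to identify $Q = \sum_{\vec k}q_{\vec k}^2$ (equivalently, note $\E N_{\vec 0} = \E|A_{\vec 0}\cap A_{\vec 0}| = \E|A_{\vec 0}| = Q$ and also $\E N_{\vec 0} = \sum q_{\vec k}^2$ by the factorization), then plug back in; this proves part \iteqref{lem:probainter:1}. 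For part \iteqref{lem:probainter:2}: if $\sum_{\vec k\in\N^d}q_{\vec k}^2 = Q = \infty$, then $\E N_{\vec n} = \sum_{\vec k}q_{\vec k}q_{\vec k+\vec n} = \infty$ as well (by Cauchy–Schwarz, or rather: $\sum_k q_k q_{k+n} = \E(|A_{\vec 0}\cap A_{\vec n}|)$; one shows this diverges — the cleanest route is that $\sum_{\vec k} q_{\vec k}q_{\vec k + \vec n} \ge$ a comparable multiple of $\sum_{\vec k}q_{\vec k}^2$ using regular-variation/monotonicity-type control on $q$, or more robustly, the renewal-theoretic fact that two independent transient-but-heavy walks meet a.s.\ when the Green's function diverges). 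Then one upgrades "$\E N_{\vec n} = \infty$" to "$\P(A_{\vec 0}\cap A_{\vec n}\neq\emptyset) = 1$": conditionally on the two walks \emph{not} having met by a large time, the conditional expected number of future common ancestors is again infinite, forcing the meeting probability to be $1$ by a standard $0$–$1$ argument (if it were $<1$ the tail $N_{\vec n}$ would be $0$ with positive probability on an event of full measure for large times, contradicting infinite total mean combined with the coalescing structure). The Hewitt–Savage / tail triviality of the i.i.d.\ edge field gives that $\{A_{\vec 0}\cap A_{\vec n}\neq\emptyset\}$ has probability $0$ or $1$ in this regime, and infinite mean rules out $0$.

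\textbf{Expected main obstacle.} The routine part is the first-common-ancestor decomposition and the expectation computations. The genuinely delicate point is part \iteqref{lem:probainter:2}: going from $\E N_{\vec n} = \infty$ (equivalently $Q = \infty$) to $\P(A_{\vec 0}\cap A_{\vec n}\neq\emptyset) = 1$. This needs a $0$–$1$ law for the event $\{A_{\vec 0}\cap A_{\vec n}\neq\emptyset\}$ — which I would get from triviality of the tail $\sigma$-field of the i.i.d.\ edges $(\vec Z_{\vec m})_{\vec m}$ restricted to sites far in the $<$-past, together with the observation that whether the two ancestral lines eventually meet is measurable with respect to such a tail (since the finitely many initial steps of each walk don't affect the eventual meeting once we translate) — combined with ruling out the value $0$ via the infinite-mean / coalescing-range estimate. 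I would also need to make sure the divergence of $\sum_{\vec k\in\N^d}q_{\vec k}^2$ genuinely implies divergence of the cross sum $\sum_{\vec k}q_{\vec k}q_{\vec k+\vec n}$; since later in the paper $q_{\vec k}$ is shown to be regularly varying with a definite tail behavior governed by $\nu$, this comparison is harmless, but stating it cleanly at this point in the paper (before that asymptotics is derived) is the subtlety to handle with care, so I would phrase the argument to avoid needing the precise asymptotics — using instead only nonnegativity and the renewal interpretation $\E|A_{\vec 0}\cap A_{\vec n}| = \sum_{\vec k}q_{\vec k}q_{\vec k+\vec n}$ together with the fact that two independent copies of a recurrent-in-the-Green's-function-sense range must intersect.
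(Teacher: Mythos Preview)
There is a genuine gap, and it comes exactly at the point you flagged as ``the subtlety''. You work with $A_{\vec 0}$ and $A_{\vec n}$ in the \emph{same} graph $\G_\mu$. In that graph each vertex has a unique outgoing edge, so once the two ancestral walks hit a common site they coincide forever. Hence $N_{\vec n}=|A_{\vec 0}\cap A_{\vec n}|\in\{0,\infty\}$ almost surely, and $\E N_{\vec n}$ is either $0$ or $\infty$; it carries no information. Your ``direct'' computation $\E N_{\vec n}=\sum_{\vec k}\P(\vec k\in A_{\vec 0})\P(\vec k\in A_{\vec n})$ is therefore false: the two indicator events are \emph{not} independent, precisely because of coalescence (the paths share edges after meeting). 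Your reconciliation step makes this visible: taking $\vec n=\vec 0$ you get $\E N_{\vec 0}=\E|A_{\vec 0}|=\sum_{\vec k}q_{\vec k}=:Q$, but $|A_{\vec 0}|=\infty$ a.s.\ (the walk takes infinitely many strictly decreasing steps), so $Q=\infty$ unconditionally. The claimed identity $Q=\sum_{\vec k}q_{\vec k}^2$ is thus wrong, and the formula in part \iteqref{lem:probainter:1} does not follow.

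The fix, which is what the paper does, is to replace $A_{\vec n}$ by the ancestral line $A_{\vec n}'$ in an \emph{independent copy} $\G_\mu'$ of the graph. Then $A_{\vec 0}$ and $A_{\vec n}'$ are genuinely independent, the factorization $\E|A_{\vec 0}\cap A_{\vec n}'|=\sum_{\vec k}q_{\vec k}q_{\vec k+\vec n}$ is honest, and after the first common point the two walks continue as independent copies (rather than coalescing), giving $\E|A_{\vec 0}\cap A_{\vec n}'|=\P(A_{\vec 0}\cap A_{\vec n}'\ne\emptyset)\,\E|A_{\vec 0}\cap A_{\vec 0}'|$ with $\E|A_{\vec 0}\cap A_{\vec 0}'|=\sum_{\vec k}q_{\vec k}^2$, now possibly finite. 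Since $\P(A_{\vec 0}\cap A_{\vec n}'\ne\emptyset)=\P(A_{\vec 0}\cap A_{\vec n}\ne\emptyset)$, part \iteqref{lem:probainter:1} follows. For part \iteqref{lem:probainter:2} the paper avoids any $0$--$1$ law: the key observation is that $|A_{\vec 0}\cap A_{\vec 0}'|$ is geometric with parameter $1-p$, $p=\P(A_{\vec 0}\cap A_{\vec 0}'\ne\{\vec 0\})$, so $\E|A_{\vec 0}\cap A_{\vec 0}'|=\infty$ forces $p=1$ and hence $|A_{\vec 0}\cap A_{\vec 0}'|=\infty$ a.s.; aperiodicity and translation then give $\P(A_{\vec 0}\cap A_{\vec n}\ne\emptyset)=1$. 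Your proposed route via tail triviality is shakier than it looks: $\{A_{\vec 0}\cap A_{\vec n}\ne\emptyset\}$ is not a tail event for the edge field in any obvious sense, and Hewitt--Savage does not apply directly.
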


\begin{proof}
The proof follows an idea developed in \citep[Lemma~3.1]{hammond13power} for the dimension $1$.
Let $\G_\mu'$ be an independent copy of $\G_\mu$. We denote by $A_{\vec n}'$ the ancestral line of ${\vec n}$ with respect to   $\G_\mu'$.
On one hand, one has
$$
\E|A_{\vec 0}\cap A_{\vec n}'|=\sum_{{\vec k}\in\Z^d}\P({\vec k}\in A_{\vec 0})\P({\vec k}\in A_{\vec n}) =\sum_{{\vec k}\in\Z^d} q_{\vec k}q_{{\vec k}+{\vec n}}.
$$
On the other hand, 
$$
\E|A_{\vec 0}\cap A_{\vec n}'|=\P(A_{\vec 0}\cap A_{\vec n}\ne \emptyset)\E|A_{\vec 0}\cap A_{\vec 0}'|=\P(A_{\vec 0}\cap A_{\vec n}\ne \emptyset)\sum_{{\vec k}\in\N^d} q_{\vec k}^2
$$
and thus \iteqref{lem:probainter:1} follows.

If $\sum_{{\vec k}\in\N^d} q_{\vec k}^2 = \infty$, then $\E|A_{\vec 0}\cap A_{\vec 0}'|=\infty$. 
But $\E|A_{\vec 0}\cap A_{\vec 0}'|$ can also be computed as
\begin{equation}\label{eq:EA0A0'}
\E|A_{\vec 0}\cap A_{\vec 0}'|
= \sum_{k\ge 0}\P(|A_{\vec 0}\cap A_{\vec 0}'|>k)
=\sum_{k\ge 0}\P(A_{\vec0}\cap A_{\vec 0}'\ne \{{\vec 0}\})^k
=\frac{1}{1-\P(A_{\vec 0}\cap A_{\vec 0}'\ne \{{\vec 0}\})}.
\end{equation}
Thus $\E|A_{\vec 0}\cap A_{\vec 0}'|=\infty$ if and only if $\P(A_{\vec 0}\cap A_{\vec 0}'\ne \{{\vec 0}\})=1$, and in this situation $|A_{\vec 0}\cap A_{\vec 0}'|=\infty$ almost surely.
Now, since the group generated by the support of $\mu$ covers $\Z^d$, we know that, for all ${\vec n}\in\Z^d$, there exists
${\vec k}_0\in\Z^d$ such that
$$
\P({\vec k}_0\in A_{\vec 0}\mbox{ and }{\vec k}_0-{\vec n}\in A_{\vec 0}')=\P({\vec k}_0\in A_{\vec 0}\cap A_{\vec n}')>0.
$$
But, since $|A_{\vec 0} \cap A'_{\vec 0}|=\infty$ a.s., we infer that $|A_{\vec k_0} \cap A_{\vec k_0-\vec n}|=\infty$ also a.s., and thus
$$
\P(A_{\vec 0} \cap A_{\vec n} \neq \emptyset)= \P(A_{\vec k_0} \cap A_{\vec k_0-\vec n} \neq \emptyset)\ge\ \P(|A_{\vec k_0} \cap A_{\vec k_0-\vec n}|=\infty)=1,
$$
which proves \iteqref{lem:probainter:2}.
\end{proof}

\begin{proof}[Proof of Proposition~\ref{prop1}]
If $C:=\sum_{{\vec k}\in\N^d} q_{\vec k}^2 < \infty$, from Lemma~\ref{lem:probainter}~\iteqref{lem:probainter:1}, we get
$$
\P(A_{\vec 0}\cap A_{\vec n}\ne \emptyset)=C^{-1}\sum_{{\vec k}\in\Z^d} q_{\vec k}q_{{\vec k}+{\vec n}}\le C^{-1}\left(\sum_{{\vec k}\in\N^d,\, \vec k\ge -\vec n} q_{\vec k}^2\right)^\frac12\left(\sum_{{\vec k}\in\N^d,\, \vec k\ge \vec n} q_{\vec k}^2\right)^\frac12,
$$
which goes to $0$ as $|{\vec n}|_\infty\to \infty$.
Thus, $\P(A_{\vec 0}\cap A_{\vec n}\ne \emptyset)\to 0$ as $|{\vec n}|_\infty\to \infty$, and we can build a sequence $({\vec n}_k)_{k\in\N}\subset \Z^d$, iteratively, such that for each $k\in\N$, 
$$
\P\left(A_{{\vec n}_k} \cap (\cup_{j=0}^{k-1} A_{{\vec n}_j})\ne \emptyset\right) \le \frac{1}{k^2}.
$$
By the Borel-Cantelli lemma, we see that, almost surely, the ancestral lines $A_{{\vec n}_k}$, $k\ge 1$, are disjoint from each other. This proves the first part of the proposition.\\
 The second part of the proposition is clear from Lemma~\ref{lem:probainter}~\iteqref{lem:probainter:2}.
\end{proof}

\subsection{The measure}\label{sec:MRV}\

From now on, we always consider an aperiodic probability measure $\mu$ on $\N_*^d$ such that $\mu\in\D(E,\nu)$, as defined in~\eqref{DoA}, where $\nu$ is a full probability measure on $\R_+^d$ and $E = {\rm diag}(1/\alpha_1,\dots,1/\alpha_d)$. 
Since the distribution of each coordinate is in the strict domain of normal attraction of a positive stable laws and since positive $\alpha$-stable laws only exist for $\alpha\in(0,1)$, it necessarily follows that $\alpha_i\in(0,1)$ for all $i=1,\dots,d$.

In this case, $\mu$ is also said to have non-standard multivariate regular variation with exponent $E$. 
Equivalently, $\mu$ is (non-standard) multivariate regularly varying with exponent measure $\phi$, with $\phi$ being the same L\'evy measure in the triplet representation of $\nu$ \cite[Corollary 8.2.11]{meerschaert01limit}. That is, for some constant $c>0$,
\equh\label{eq:MRV}
\limn n\mu(n^EA)=c\phi(A) \mfa A\in\calB(\R^d) \mbox{ bounded away from $\vv 0$ and $\phi(\partial A) = 0$}.
\eque
Equivalently, this means that $n\mu(n^E\cdot)$ converges vaguely to $c\phi$, in the space of Radon measures on $\R^d\setminus\{\vv 0\}$ equipped with the vague topology, under which sets in $\calB(\R^d)$ bounded away from $\{\vv0\}$ are relatively compact. 
Most of the applications in the literature of multivariate regular variation, however, focus on the case that $\alpha_1=\cdots=\alpha_d$. In this case,~\eqref{eq:MRV} is referred to as multivariate regular variation in the literature. Standard references on (standard) multivariate regular variation includes \citep{resnick87extreme,resnick07heavy}. References on non-standard multivariate regular variation include  \citep{resnick79bivariate}, \cite[Chapter 6]{resnick07heavy}. See also some recent development in~\citep{resnick14tauberian}. Some examples are given at the end of the subsection.

We denote by $P$ the Fourier transform of the measure $\mu$, that is
$$
P({\vec t})=\sum_{{\vec k}\in\N^d}\mu(\{\vec k\}) e^{i {\vec t}\cdot{\vec k}},\qquad {\vec t}\in\R^d.
$$
Note that the assumption that the additive group generated by the support of $\mu$ is all $\Z^d$ is equivalent to:
$$
P(\vec t)=1\mbox{ if and only if the coordinates of ${\vec t}$ belong to $2\pi\Z$},
$$
see for example~\citet[p.76]{spitzer76principles}.

Let $\G_\mu$ be the random graph associated to $\mu$ as defined in Section \ref{randomgraph}.
The asymptotic behavior of $\{q_{\vec k}\}_{\vec k\in\N^d}$ will play a key role in our analysis. It is essentially determined by the measure $\mu\in\D(E,\nu)$. We denote by $Q$
the Fourier series with coefficients $q_{\vec k}=\P({\vec 0}\in A_{\vec k})$, that is 
$$
Q({\vec t})=\sum_{{\vec k}\in\N^d}q_{\vec k} e^{i{\vec t}\cdot{\vec k}}.
$$
Using that $q_{\vec k}=\sum_{\vec j\in\N_*^d}\mu(\{\vec j\})q_{\vec k-\vec j}$ for $\vec k>\vec 0$, we see that both Fourier series are linked by the relation
$$
Q(\vec t)=\frac{1}{1-P(\vec t)}.
$$
From Lemma~\ref{lem:probainter}, we see that 
$$
\P(A_{\vec0}\cap A_{\vec n}\ne \emptyset)=\frac{c_{\vec n}(|Q|^2)}{c_{\vec 0}(|Q|^2)},
$$
where $c_{\vec k}(|Q|^2)$ denotes the Fourier coefficient of index ${\vec k}$ of $|Q|^2=Q\overline{Q}$. This relation explains why the Fourier series $Q$ plays a crucial role in the study of the random graph.

We denote by $\psi(\vec t) = \int_{\R_+^d} e^{i\vec t\cdot \vec x} d\nu(\vec x)$ the characteristic function of the full $E$-operator stable measure $\nu$. Note that it follows from~\eqref{DoA} that the log-characteristic function $\log\psi$ is then an $E$-homogeneous function, that is 
$$
\mbox{for all }t>0\mbox{ and }\vec x \in\R^d,\; \log \psi(t^E\vec x)= t\log \psi(\vec x).
$$
Further, $\log\psi(\vec 0)=0$ and for all $\vec x\ne \vec 0$, $|\log\psi(\vec x)|>0$.

The two following lemmas are key results concerning the behavior of $Q$ at $\vec 0$.

\begin{lemma} \label{Qhomogene}
Let $\mu\in\D(E,\nu)$ be as described above and $\psi$ the characteristic function of $\nu$.
Then
$$|Q(\vec x)|=|1-P(\vec x)|^{-1}=\frac{g(\vec x)}{|\log\psi(\vec x)|}, \;\vec x \in [-\pi,\pi]^d,$$
where $g$ is continuous and positive with $g(\vec 0)=1$.
\end{lemma}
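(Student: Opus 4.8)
The plan is to analyze the behavior of $Q(\vec x) = (1-P(\vec x))^{-1}$ near the origin by relating $1-P(\vec x)$ to $-\log\psi(\vec x)$, since both vanish at $\vec 0$ and the domain-of-attraction hypothesis $\mu\in\D(E,\nu)$ should force them to be asymptotically comparable along every direction. First I would recall that $\mu\in\D(E,\nu)$ means $n^{-E}\sum_{i=1}^n\vec\xi_i\Rightarrow\nu$, which at the level of characteristic functions reads $P(n^{-E}\vec t)^n\to\psi(\vec t)$ for each fixed $\vec t$, i.e.\ $n(1-P(n^{-E}\vec t))\to-\log\psi(\vec t)=|\log\psi(\vec t)|$ (using that $\log\psi$ is real and negative off the origin, or more carefully $n\log P(n^{-E}\vec t)\to\log\psi(\vec t)$ together with $P\to1$). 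This is the pointwise statement; the real work is to upgrade it to a statement that $\vec x\mapsto |\log\psi(\vec x)|/|1-P(\vec x)|$ extends continuously and positively to a neighborhood of $\vec 0$ in $[-\pi,\pi]^d$.

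The key device is the $E$-homogeneity of $\log\psi$, namely $\log\psi(t^E\vec x)=t\log\psi(\vec x)$ for $t>0$, which lets me decompose any small $\vec x\neq\vec 0$ uniquely as $\vec x = t^E\vec\theta$ with $t>0$ small and $\vec\theta$ on a compact "unit sphere'' $\{\vec y:|\log\psi(\vec y)|=1\}$ (the existence and compactness of such a Jordan-measurable sphere for operator-stable measures is standard — e.g.\ from Meerschaert–Scheffler; I would cite the reference already in play). Then $|\log\psi(\vec x)| = t$, so I need to show $|1-P(t^E\vec\theta)|/t$ converges, as $t\to0$, uniformly in $\vec\theta$ on the sphere, to a continuous positive limit $1/g$. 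Writing $g(\vec x):=|\log\psi(\vec x)|/|1-P(\vec x)|$ and $g(\vec 0):=1$, the pointwise limit above gives $g(t^E\vec\theta)\to 1$ as $t\to 0$ for fixed $\vec\theta$; I would then argue that the convergence $n(1-P(n^{-E}\vec y))\to|\log\psi(\vec y)|$ is in fact locally uniform in $\vec y$ (this follows from the regular-variation statement~\eqref{eq:MRV}, i.e.\ vague convergence of $n\mu(n^E\cdot)$ to $c\phi$, which controls $n(1-P(n^{-E}\vec y)) = n\int(1-\cos(\vec y\cdot\vec k)+i\sin(\vec y\cdot\vec k))\,d\mu(\vec k)$ after splitting the integral near and far from the singularity). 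Combining local uniformity with the $E$-homogeneity scaling, one gets that $g$ is continuous on a punctured neighborhood of $\vec 0$ and extends continuously with $g(\vec 0)=1$; positivity of $g$ near $\vec 0$ comes from $g(\vec 0)=1>0$ and continuity, after possibly shrinking the neighborhood, while away from $\vec 0$ in $[-\pi,\pi]^d$ one uses that $1-P(\vec x)\neq 0$ there (by aperiodicity) and $|\log\psi(\vec x)|>0$.

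The main obstacle I anticipate is precisely this passage from pointwise to locally uniform convergence of $n(1-P(n^{-E}\vec y))$ to $|\log\psi(\vec y)|$ near $\vec y=\vec 0$, because $\mu$ is only assumed to be in the domain of \emph{normal} attraction (no truncation or slowly varying correction is needed, which helps), but the integrand $1-e^{i\vec y\cdot\vec k}$ has a singular contribution from large $\vec k$ that must be matched against the L\'evy measure $\phi$ of $\nu$. I would handle this by the standard truncation argument: fix $\delta>0$, split $\R^d_+$ into $\{|n^{-E}\vec k|_\infty\le\delta\}$ and its complement, use~\eqref{eq:MRV} together with an integration of the tail bound $\mu(n^E A)$ on the far region (where Potter-type bounds following from regular variation with index in $(0,1)$ per coordinate give a uniform dominated estimate), and a second-moment-free quadratic estimate $|1-e^{i\vec y\cdot\vec k}|\le |\vec y\cdot\vec k|$ combined with the fact that $\int_{|\vec x|_\infty\le\delta}|\vec x|\,d\phi(\vec x)<\infty$ on the near region (finiteness here uses $\alpha_i<1$). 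Letting $n\to\infty$ and then $\delta\to 0$ recovers $-\log\psi(\vec y)$, and a careful bookkeeping of the error terms shows the estimates are uniform for $\vec y$ in any compact set, in particular on the unit sphere $\{|\log\psi|=1\}$ after rescaling. Once this uniform limit is in hand, the continuity and positivity of $g$ follow by elementary compactness arguments, completing the proof.
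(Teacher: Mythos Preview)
Your approach is essentially the paper's: define $g(\vec x)=|\log\psi(\vec x)|/|1-P(\vec x)|$, exploit the $E$-homogeneity of $\log\psi$ to work in polar coordinates $\vec x=t^{-E}\vec\theta$, and establish uniform convergence on a compact sphere to get continuity of $g$ at~$\vec 0$. The one substantive difference is that the paper short-circuits your truncation argument: it invokes a known result (citing \cite{maejima94operator}) that $\mu\in\D(E,\nu)$ already gives $P(n^{-E}\vec\theta)^n\to\psi(\vec\theta)$ \emph{uniformly} in $\vec\theta\in S_E$, and then passes from $t\log P(t^{-E}\vec\theta)\to\log\psi(\vec\theta)$ to $t(P(t^{-E}\vec\theta)-1)\to\log\psi(\vec\theta)$ via $\log(1+x)\sim x$. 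Your L\'evy-measure truncation route using~\eqref{eq:MRV} would also yield this uniformity, but it is more work than needed and your sketch has small slips (e.g., $\log\psi$ is complex-valued in general---your parenthetical ``more carefully'' is the right version---and the estimate $|1-e^{i\vec y\cdot\vec k}|\le|\vec y\cdot\vec k|$ is linear, not quadratic). The paper uses the geometric sphere $S_E$ from the norm $\|\cdot\|_E$ rather than your level set $\{|\log\psi|=1\}$, but either choice works.
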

 
\begin{proof} Let us use a change of variables in polar coordinates.
 As in \cite[Chapter 6]{meerschaert01limit},  we define a new norm on $\R^d$, related to the matrix $E$, by 
\begin{equation}\label{def:Enorm}
 \|{\vec x}\|_E=\int_0^1 |r^E{\vec x}|\frac{1}{r}dr,
\end{equation}
where here $|\cdot|$ denotes the Euclidean norm. 
The unit ball $S_E=\{{\vec x}\in\R^d\mid \|{\vec x}\|_E=1\}$ associated to this norm is a compact set of $\R^d\setminus\{\vec 0\}$ and 
every vector in $\R^d\setminus\{{\vec 0}\}$ can be uniquely written as $r^E{\vec \theta}$ with $r>0$ and ${\vec \theta}\in S_E$, since for any $\vec x\ne \vec 0$, the map $t\mapsto \|t^E\vec x\|_E$ is strictly increasing on $(0,\infty)$.

Since $\mu\in \D(E,\nu)$, we have 
$$
P(n^{-E}\vec \theta)^n \rightarrow \psi(\vec \theta),\; \mbox { as }n\to\infty, \mbox{ uniformly in }\vec  \theta\in S_E,
$$
from which we infer that
$$
t\log P(t^{-E}\vec  \theta) \rightarrow \log\psi(\vec  \theta),\; \mbox { as }t\to\infty, \mbox{ uniformly in }\vec  \theta\in S_E,
$$
see \citep[p.159]{maejima94operator}.
Using that $\log(1+x)\sim x$ as $x\to 0$ and that $P$ is continuous at $\vec 0$, we obtain
$$
t(P(t^{-E}\vec  \theta)-1) \rightarrow \log\psi(\vec  \theta),\; \mbox { as }t\to\infty, \mbox{ uniformly in }\vec  \theta\in S_E.
$$
Thus, for all $\varepsilon >0$, there exists $T>0$ such that for all $t>T$, 
$$
\left|\frac{|\log\psi(t^{-E}\vec  \theta)|}{|P(t^{-E}\vec  \theta)-1|} -1\right|=\left|\frac{|\log\psi(\vec  \theta)|}{t|P(t^{-E}\vec  \theta)-1|} -1\right|\le \varepsilon,  \mbox{ uniformly in }\vec \theta\in S_E.
$$
Now, set $g(\cdot)=|\log\psi(\cdot)(P(\cdot)-1)^{-1}|$. The function $g$ is clearly continuous and positive on $[-\pi,\pi]^d\setminus\{\vec 0\}$. 
Set $\delta=\inf_{\vec\theta\in S_E} \| T^{-E}\vec \theta \|_E>0$. Then for all $\vec x$ such that $\|\vec x\|_E<\delta$, $\vec x=t_0^{-E}\vec \theta_0$ with $\vec \theta_0\in S_E$ and $t_0>T$ and thus  
$$
|g(\vec x)-1|=|g(t_0^{-E}\vec \theta_0)-1|\le \varepsilon.
$$
Thus $g$ is continuous at $\vec 0$ and $g(\vec 0)=1$.
\end{proof}

We are thus interested by the function $\vec x\mapsto \log\psi(\vec x)$, which is a continuous $E$-homogeneous function that only vanishes at $\vec 0$. Recall that $q(E) = {\rm trace}(E)$.
\begin{lemma}\label{integrabilite} If $\phi:\R^d\to \R$ is a continuous $E$-homogeneous function
that only vanishes at $\vec 0$,
then for any $p>0$,
$\vec x\mapsto |\phi(\vec x)|^{-p}$ is locally integrable in $\R^d$ if and only if $q(E)>p$.
\end{lemma}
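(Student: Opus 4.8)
The plan is to reduce the statement to a one-dimensional radial integral via the polar-coordinate decomposition associated with $E$ that was already introduced in the proof of Lemma~\ref{Qhomogene}. Recall that every $\vec x \in \R^d \setminus \{\vec 0\}$ can be written uniquely as $\vec x = r^E \vec\theta$ with $r > 0$ and $\vec\theta \in S_E$, and that under this change of variables Lebesgue measure factorizes as $d\vec x = r^{q(E)-1} m(d\vec\theta)\, dr$ for a suitable finite measure $m$ on the compact set $S_E$ (see \citep[Chapter~6]{meerschaert01limit}); the exponent $q(E) = {\rm trace}(E)$ appears because the Jacobian of $\vec x \mapsto r^E\vec\theta$ scales like $r^{{\rm trace}(E)}$. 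Since $\phi$ is $E$-homogeneous, $\phi(r^E\vec\theta) = r\,\phi(\vec\theta)$, so $|\phi(\vec x)|^{-p} = r^{-p}|\phi(\vec\theta)|^{-p}$.

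First I would treat integrability near $\vec 0$ and integrability near infinity separately; both are controlled by the same radial exponent but the roles of $q(E) > p$ and $q(E) < p$ swap. On a bounded neighborhood of $\vec 0$, the region corresponds (up to the behavior of $\|\cdot\|_E$) to $r \in (0, r_0)$, and
\[
\int \ind_{\{\|\vec x\|_E \le r_0\}} |\phi(\vec x)|^{-p}\, d\vec x
= \left(\int_{S_E} |\phi(\vec\theta)|^{-p}\, m(d\vec\theta)\right)\int_0^{r_0} r^{q(E)-1-p}\, dr,
\]
which converges precisely when $q(E) - 1 - p > -1$, i.e.\ $q(E) > p$. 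The key point making the angular factor harmless is that $\phi$ is continuous and vanishes only at $\vec 0$, hence is bounded away from zero on the compact set $S_E$, so $\int_{S_E}|\phi(\vec\theta)|^{-p} m(d\vec\theta) < \infty$; and continuity of $\phi$ also gives an upper bound on $S_E$, so the angular integral is genuinely a finite positive constant. Similarly, near infinity the radial integral $\int_{r_0}^\infty r^{q(E)-1-p}\, dr$ converges iff $q(E) < p$. Therefore $|\phi|^{-p}$ is integrable on all of $\R^d$ iff $q(E) < p$, but it is \emph{locally} integrable — integrable on every bounded set, equivalently integrable on a neighborhood of $\vec 0$ since away from $\vec 0$ the integrand is bounded — iff $q(E) > p$, which is the claimed equivalence.

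The main technical obstacle is making the polar factorization rigorous: one must justify that $d\vec x = r^{q(E)-1}\, m(d\vec\theta)\, dr$ with $r = r(\vec x)$ determined by $\|\vec x\|_E = 1$ after rescaling, that the map $(r,\vec\theta) \mapsto r^E\vec\theta$ is a bi-measurable bijection from $(0,\infty)\times S_E$ onto $\R^d\setminus\{\vec 0\}$, and that $m(S_E) < \infty$. This is precisely the content of \citep[Chapter~6]{meerschaert01limit} (the same reference already invoked for Lemma~\ref{Qhomogene}), so I would cite it rather than reprove it; the monotonicity of $t \mapsto \|t^E\vec x\|_E$ established in the proof of Lemma~\ref{Qhomogene} already gives the uniqueness of the representation. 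One should also note that $|\phi|^{-p}$ is measurable (as $\phi$ is continuous and nonvanishing off $\vec 0$), so Tonelli's theorem applies to the nonnegative integrand and the iterated integral computation above is legitimate. With these in place the proof is a two-line computation; the only care needed is to state clearly that "locally integrable" means integrable on compact sets and to observe that compactness of $S_E$ plus continuity of $\phi$ converts the angular integral into a harmless constant in both regimes.
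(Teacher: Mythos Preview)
Your proof is correct and follows essentially the same route as the paper: both use the $E$-polar decomposition $d\vec x = r^{q(E)-1}\,\sigma_E(d\vec\theta)\,dr$, exploit $E$-homogeneity to pull out $r^{-p}$, and reduce to the one-dimensional integral $\int_0^1 r^{q(E)-1-p}\,dr$ times a finite angular factor (finite because $|\phi|$ is continuous and positive on the compact $S_E$). The only cosmetic differences are that the paper cites \citep[Proposition~2.3]{bierme07operator} rather than \citep{meerschaert01limit} for the polar factorization, and omits your (unneeded) discussion of the behavior at infinity.
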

\begin{proof}
There exists a unique finite Radon measure $\sigma_E$ on $S_E$ which allows the change of variable
$$
\int_{\R^d} f({\vec t}) d{\vec t}=\int_0^{+\infty}\int_{S_E}f(r^E{\vec \theta}) r^{q(E)-1} d\sigma_E({\vec \theta})dr,
$$
for all $f\in L^1(\R^d)$ (see \citep{bierme07operator}, Proposition 2.3).
Thus, using the $E$-homogeneity of $\phi$, one has
\begin{align*}
\int_{\{\|\vec x\|_E\le 1\}}|\phi(\vec x)|^{-p}d{\vec x}
&=\int_0^{1}\int_{S_E}r^{q(E)-1}|\phi(r^E{\vec \theta})|^{-p} d\sigma_E({\vec \theta}) dr \\
&=\int_0^{1}r^{q(E)-1-p} dr \int_{S_E}|\phi({\vec \theta})|^{-p} d\sigma_E({\vec \theta}).
\end{align*}
The second integral is finite because $|\phi|$ is continuous and positive on the compact set $S_E$, and the first integral is finite if and only if $q(E)>p$.
\end{proof}

As a first consequence, we get the following proposition.
\begin{proposition}
 Let $\mu\in\D(E,\nu)$. The random graph $\G_\mu$ has almost surely infinitely many components if and only if $q(E)>2$.
\end{proposition}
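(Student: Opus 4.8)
The plan is to combine Proposition~\ref{prop1} with Lemma~\ref{Qhomogene} and Lemma~\ref{integrabilite}. By Proposition~\ref{prop1}, $\G_\mu$ has almost surely infinitely many components if and only if $\sum_{\vec k\in\N^d}q_{\vec k}^2<\infty$, so the whole statement reduces to showing that this sum converges precisely when $q(E)>2$. First I would relate the $\ell^2$-norm of the coefficients $(q_{\vec k})_{\vec k\in\N^d}$ to the $L^2$-norm of the Fourier series $Q$ on the torus: by Parseval's identity, $\sum_{\vec k\in\N^d}q_{\vec k}^2 = \sum_{\vec k\in\Z^d}|c_{\vec k}(Q)|^2 = \frac{1}{(2\pi)^d}\int_{[-\pi,\pi]^d}|Q(\vec x)|^2\,d\vec x$, using that $q_{\vec k}=0$ for $\vec k\notin\N^d$. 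Hence convergence of $\sum q_{\vec k}^2$ is equivalent to $Q\in L^2([-\pi,\pi]^d)$.

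Next I would invoke Lemma~\ref{Qhomogene}, which gives $|Q(\vec x)| = g(\vec x)/|\log\psi(\vec x)|$ on $[-\pi,\pi]^d$ with $g$ continuous and positive (bounded above and below away from zero on the compact cube, shrinking if necessary to a small cube around $\vec 0$ and handling the complement separately, where $|Q|$ is continuous and bounded since $1-P$ does not vanish except where the coordinates of $\vec t$ are in $2\pi\Z$). Therefore $\int_{[-\pi,\pi]^d}|Q(\vec x)|^2\,d\vec x$ is finite if and only if $\vec x\mapsto|\log\psi(\vec x)|^{-2}$ is locally integrable near $\vec 0$. Since $\log\psi$ is a continuous $E$-homogeneous function vanishing only at $\vec 0$ (as recalled before Lemma~\ref{integrabilite}), Lemma~\ref{integrabilite} with $p=2$ says exactly that this local integrability holds if and only if $q(E)>2$.

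Putting these equivalences together yields: $\G_\mu$ has a.s.\ infinitely many components $\iff \sum_{\vec k\in\N^d}q_{\vec k}^2<\infty \iff Q\in L^2([-\pi,\pi]^d) \iff |\log\psi|^{-2}$ locally integrable $\iff q(E)>2$. The main obstacle, though a minor one, is the careful bookkeeping near the boundary of the cube and at the other zeros of $1-P$: one must make sure that the only possible source of divergence of the integral of $|Q|^2$ is the singularity at the origin, which follows from aperiodicity of $\mu$ (so $P(\vec t)=1$ only when all coordinates of $\vec t$ lie in $2\pi\Z$, i.e., only at the corners of $[-\pi,\pi]^d$, all equivalent to $\vec 0$ modulo $2\pi\Z^d$) together with continuity of $1-P$; elsewhere $|Q|$ is continuous hence bounded, contributing a finite amount. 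The remaining steps are routine applications of Parseval and the two lemmas.
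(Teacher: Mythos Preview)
Your proposal is correct and follows essentially the same route as the paper: Parseval to turn $\sum q_{\vec k}^2$ into $\int_{[-\pi,\pi]^d}|Q|^2$, Lemma~\ref{Qhomogene} to replace $|Q|$ by $g/|\log\psi|$ with $g$ continuous and positive on the compact cube, and Lemma~\ref{integrabilite} with $p=2$ to characterize integrability of $|\log\psi|^{-2}$; then Proposition~\ref{prop1} finishes. One small imprecision: in $[-\pi,\pi]^d$ the only point with all coordinates in $2\pi\Z$ is $\vec 0$ itself (since $\pm\pi\notin2\pi\Z$), so there are no additional singularities at the corners to worry about---your boundary bookkeeping is unnecessary, though harmless.
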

Note that, when $d=1$, the condition $q(E)>2$ becomes $\alpha_1<\frac12$, which corresponds to the condition assumed in \citep{hammond13power}. When $d\ge 2$, since $\alpha_i\in(0,1)$ for all $i=1,\ldots,d$, then the conditon $q(E)>2$ is always satisfied.
\begin{proof}
As a consequence of Lemma~\ref{Qhomogene}, using Parseval 
identity, we get
$$
\sum_{{\vec k}\in\N^d} q_{\vec k}^2=\frac{1}{(2\pi)^d}\int_{[-\pi,\pi]^d}|Q(\vec x)|^2 d{\vec x}= \frac{1}{(2\pi)^d}\int_{[-\pi,\pi]^d}|g(\vec x)|^2|\log\psi(\vec x)|^{-2} d{\vec x}.
$$
Since $g$ is bounded and bounded away from $0$ on any compact set, we see that $\sum_{{\vec k}\in\N^d} q_{\vec k}^2 <+\infty$ if and only if $\vec x\mapsto|\log \psi(\vec x)|^{-2}$ is integrable on $[-\pi,\pi]^d$.
The function $\vec x\mapsto\log \psi(\vec x)$ being $E$-homogeneous, by Lemma~\ref{integrabilite},
it is the case if and only if $q(E)>2$ and the result follows from Proposition~\ref{prop1}.
\end{proof}

To conclude the section, we give few examples of possible probability measure $\mu\in\D(E,\nu)$.

\begin{example}[Product measure]
Let $\mu$ be the product measure $\mu_1\otimes\cdots\otimes\mu_d$, where each $\mu_i$ is a regularly varying measure on $\N_*$ with index $\alpha_i\in(0,1)$ such that
$$
\mu_i([n,\infty))\sim c_in^{-\alpha_i},
$$
for some $c_i>0$.
Then, each $\mu_i$ belongs to the strict domain of normal  attraction (with normalization $n^{-1/\alpha_i}$) of a positive $\alpha_i$-stable law $\nu_i$, see \citep[Theorem 8.3.1]{bingham87regular}. 
Positive $\alpha$-stable laws only exist for $\alpha\in(0,1)$, and then, their characteristic functions are given by
$$
\varphi(t)=\exp\ccbb{-\gamma|t|^\alpha\pp{1-i\mbox{sgn}(t) \tan\pp{\frac{\pi}{2}\alpha}}},
$$
for some $\gamma>0$. See \citep[Theorem 8.3.2]{bingham87regular}. 
In this situation, the measure $\mu$ belongs to the strict domain of normal attraction of the measure $\nu=\nu_1\otimes\cdots\otimes\nu_d$ which is a full $E$-operator stable distribution, with $E=\mbox{diag}(1/\alpha_1,\ldots,1/\alpha_d)$. The characteristic function $\psi$ of $\nu$ is such that
$$
\log\psi(\vec x)=\sum_{j=1}^d \gamma_j|x_j|^{\alpha_j} \pp{1-i\mbox{sgn}(x_j) \tan\pp{\frac{\pi}{2}\alpha_j}},
$$
for some $\gamma_j>0$.
\end{example}

\begin{example}[Standard multivariate regular variation]\label{example:standard}
For the standard multivariate regular variation, that is when $\alpha_1=\cdots=\alpha_d=\alpha$, many examples have been known from the studies of heavy-tailed random vector $\vv X = (X_1,\dots,X_d)\in\Rd$, in the literature of heavy-tailed time series. An extensively investigated condition for multivariate regular variation is
\equh\label{eq:threshold}
\frac{\proba\pp{|{\vv X}|>ux, \vv X/|{\vv X}|\in\cdot}}{\proba(|{\vv X}|>u)} \weakto {x^{-\alpha}\sigma(\cdot)} \mmas u\to\infty, \mfa x>0,
\eque
for $|\cdot|$ a norm on $\R^d$ and $\sigma$ a probability measure on ${\mathcal B}(S)$ for $S=\{x\in\R^d: |x|=1\}$.
See for example \citep{basrak09regularly}.
It is known that~\eqref{eq:MRV} implies~\eqref{eq:threshold} (see e.g.~\citep[Theorem 1.15]{lindskog04multivariate}).

The measure $\sigma$ is often referred to as the spectral measure, which captures the dependence of extremes. For example, the case that $\sigma$ concentrates on the $d$-axis with equal mass means that, in view of~\eqref{eq:threshold}, the extremes of the stationary processes are asymptotically independent. For more theory and examples on spectral measures reflecting asymptotic dependence of the extremes, we refer to \cite[Chapter 5]{resnick87extreme}.

\end{example}

\begin{example}[Polar coordinate]\label{example:polar}
A standard procedure to obtain non-standard regularly varying random vectors is via the representation using polar coordinate. 
We use the norm $\|\cdot\|_E$ introduced in \eqref{def:Enorm} to identify $\R^d\setminus\{\vv0\}$ with $(0,\infty)\times S_E$ for the unit ball 
 $S_E=\{{\vec x}\in\R^d\mid \|{\vec x}\|_E=1\}$ such that
every vector in $\R^d\setminus\{{\vec 0}\}$ can be uniquely written as $r^E{\vec \theta}$ with $r>0$ and ${\vec \theta}\in S_E$.
By \cite[Theorem 6.1.7]{meerschaert01limit}, in case of~\eqref{DoA} (equivalently~\eqref{eq:MRV}), $\phi$ can be taken to have the polar coordinate representation
\equh\label{eq:polar}
\phi(A) = \int_0^\infty\int_{S_E}\indd{t^E\vv\theta\in A}\sigma(d\vv\theta)\frac{dt}{t^2},
\eque
for some  finite Borel measure $\sigma$ on $S_E$.  In our case, since $\mu$ has support contained in $\N^d_*$, $\phi$ is a measure on $\R^d_+$, and $\sigma$ is a finite measure on $S_E^+=S_E\cap \R^d_+$.  Identifying $\R_+^d\setminus\{\vv0\}$ with $(0,\infty)\times S_E^+$, to obtain a multivariate regular varying measure as in~\eqref{eq:MRV}, it suffices to show
\equh\label{eq:MRV2}
\mu((r,\infty)\times\Gamma)\sim cr\inv\sigma(\Gamma) \mmas r\to\infty, \mfa \Gamma\in\calB(S_E^+).
\eque
This follows from a standard argument showing that $\{(r,\infty)\times\Gamma\}_{r>0, \Gamma\in\calB(S_E^+)}$ are a convergence determining class. 

A standard procedure to construct a random vector of which the distribution $\mu$ satisfies~\eqref{eq:MRV2} is the following. Let $R$ be a non-negative random variable with $\proba(R>r)\sim c\sigma(S_E^+)r\inv$ as $r\to\infty$. Let $\vv\Theta$ be a random element in $S_E^+$ with probability $\sigma/\sigma(S_E^+)$. Assume that $R$ and $\vv\Theta$ are independent. Then, $R^E\vv\Theta$ is regularly varying in $\R^d_+$ in the sense of~\eqref{eq:MRV2}. Indeed,
\[
\proba(R^E\vv\Theta\in(r,\infty)\times\Gamma) 
= \proba(R>r, \vv\Theta\in\Gamma) \sim cr\inv\sigma(\Gamma) \mmas r\to\infty.
\]
The so-obtained distributions can then be modified to become distributions on $\N^d_*$ with the same regular-variation property. We omit the details.
\end{example}

\begin{remark}
For our main results to hold, we do not impose any assumption on the spectral measures in Examples~\ref{example:standard} and~\ref{example:polar}. The only assumption is the non-standard multivariate regular variation with indices $\alpha_1,\dots,\alpha_d\in(0,1)$, and $\alpha_1<1/2$ when $d=1$.
\end{remark}

\subsection{The random field} \label{randomfield}\

We now associate a random field $(X_{\vec j})_{{\vec j}\in\Z^d}$ to the random graph $\G_\mu$. Assume that $\mu \in \D(E,\nu)$ as in the preceding section, with the diagonal matrix $E$ satisfying $q(E)>2$, and let $p\in(0,1)$. We proceed as follows:

First, generate the random directed graph $\G_\mu$ as described in previous sections, which has almost surely infinitely many connected components in this situation. Let $\{\C_i\mid i\ge 1\}$ denote the collection of disjoint components and associate to each component $\C_i$ a random variable $\varepsilon_i$ such that $(\varepsilon_i)_{i\ge 1}$ are i.i.d.\ with distribution given by $\P(\varepsilon_i=1)=p$ and $\P(\varepsilon_i=-1)=1-p$. Finally, for all $\vec j\in\Z^d$, set $X_{\vec j}=\varepsilon_i$ where $i$ is such that $\vec j\in\C_i$. This construction implies that $X_{\vec j}=X_{\vec k}$ as soon as $\vec j$ and $\vec k$ belong to the same component of $\G_\mu$, and they are independent otherwise.
\begin{remark}
As pointed out already in \citep{hammond13power}, the one-dimensional model is an example of the so-called chains with complete connections, which has a long history with different names; see \citep{fernandez04chains,fernandez05chains} for more references. In the same spirit, our model is an example of partially ordered models recently introduced by \citep{deveaux10partially}, an extension of chains with complete connections to random fields.
\end{remark}

For all ${\vec n}\in\N^d$, we introduce the partial sum
$$
S_{\vec n}=\sum_{{\vec j}\in[{\vec 0},{\vec n}-{\vec 1}]}X_{\vec j}.
$$

Our aim is to establish a functional central limit theorem (invariance principle) for the partial sums $S_{\vec n}$ (with centering and appropriate normalization) when ${\vec n}$ goes to infinity with a specific relative speed in each direction. We will distinguish different regimes.  
We first show, in this section, that  $(X_{\vec j})_{{\vec j}\in\Z^d}$ can be seen as a linear random field with martingale differences innovations, and thus, $S_{\vec n}$ is a partial sum of a linear random field.

\medskip

For all ${\vec j}\in\Z^d$, we define the $\sigma$-fields $\sigma_{\vec j}=\sigma\{X_{\vec k}\mid {\vec k}<{\vec j}\}$ and $\overline\sigma_{\vec j}=\sigma\{X_{\vec k}\mid {\vec k}\ngeq{\vec j}\}$. Note that, for ${\vec j}<{\vec n}$,
the value of $X_{\vec n}$ conditioned on $\sigma_{\vec j}$ is obtained by sampling the ancestral line $A_{\vec n}$ and taking the value of $X_{\vec k}$ where ${\vec k}$ is the first site of the ancestral line $A_{\vec n}$ which is strictly smaller than ${\vec j}$. 
We denote 
\begin{equation}\label{eq:Xj^*}
X^*_{\vec j}=X_{\vec j}-\E(X_{\vec j}\mid \sigma_{\vec j})=X_{\vec j}-\E(X_{\vec j}\mid \overline\sigma_{\vec j}).
\end{equation}
The equality $\E(X_{\vec j}\mid\overline\sigma_{\vec j})=\E(X_{\vec j}\mid\sigma_{{\vec j}})$ comes from the fact that starting from ${\vec j}$, the next site in the ancestral line $A_{\vec j}$ is necessarily strictly smaller than ${\vec j}$.
Then for all ${\vec j}\in\Z^d$, $\E(X^*_{\vec j}\mid \overline\sigma_{\vec j})=0$ and $X^*_{\vec j}$ is measurable with respect to $\overline\sigma_{{\vec j}+{\vec e}_q}$ for all $q=1,\ldots, d$, where ${\vec e}_q$ is the $q$-th canonical unit vector of $\R^d$. In particular, the random variables $X^*_{\vec j}$ are orthogonal to each other, that is, $\E(X^*_{\vec j}X^*_{{\vec k}})=0$ as soon as ${\vec j}\ne{{\vec k}}$.

\begin{lemma}\label{lem:varX^*0}
In the above setting,
$$
\Var(X^*_{\vec 0})=\left(\sum_{{\vec k}\in\N^d}q_{\vec k}^2\right)^{-1}\Var(X_{\vec 0}).
$$
\end{lemma}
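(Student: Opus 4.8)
The goal is to compute $\Var(X^*_{\vec 0})$. The starting point is the linear-random-field representation $X_{\vec i} = \sum_{\vec j\in\Z^d} q_{\vec j} X^*_{\vec i - \vec j}$ announced in~\eqref{eq:linear}; in fact, since $q_{\vec j}=0$ unless $\vec j\geq \vec 0$ (with $q_{\vec 0}=1$), this reads $X_{\vec i} = \sum_{\vec j\in\N^d} q_{\vec j} X^*_{\vec i - \vec j}$. I would first establish this representation at the site $\vec 0$: iterating the tower property along the ancestral line $A_{\vec 0}$, one has $X_{\vec 0} - \E(X_{\vec 0}\mid\sigma_{\vec j}) = \sum_{\vec k : \vec j\leq \vec k < \vec 0}\big(\E(X_{\vec 0}\mid\sigma_{\vec k+\vec 1})-\E(X_{\vec 0}\mid\sigma_{\vec k})\big)$-type telescoping, but the cleanest route is to note that $\E(X_{\vec 0}\mid\sigma_{\vec j})$ depends only on $X_{\vec k}$ at the first ancestor $\vec k\in A_{\vec 0}$ strictly below $\vec j$, and to decompose $X_{\vec 0}$ into the orthogonal martingale-difference pieces $X^*_{\vec k}$ carried by the ancestral structure. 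Alternatively, and more efficiently for this particular statement, I would avoid proving the full expansion and instead argue directly with second moments as follows.

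\textbf{Main computation.} Since the $(X^*_{\vec k})_{\vec k\in\Z^d}$ are pairwise orthogonal (as noted just before the lemma) and stationary, write $v := \Var(X^*_{\vec 0}) = \E\big[(X^*_{\vec 0})^2\big]$. Using $X_{\vec 0} = \sum_{\vec j\in\N^d} q_{\vec j} X^*_{-\vec j}$ and orthogonality,
\[
\Var(X_{\vec 0}) = \E\Big[\Big(\sum_{\vec j\in\N^d} q_{\vec j} X^*_{-\vec j}\Big)^2\Big] = \sum_{\vec j\in\N^d} q_{\vec j}^2\,\E\big[(X^*_{-\vec j})^2\big] = \Big(\sum_{\vec j\in\N^d} q_{\vec j}^2\Big) v,
\]
where stationarity of $(X^*_{\vec k})$ (inherited from the stationarity of $\G_\mu$ and the i.i.d.\ labels $\varepsilon_i$) gives $\E[(X^*_{-\vec j})^2] = v$ for every $\vec j$. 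Since $\sum_{\vec k\in\N^d} q_{\vec k}^2<\infty$ under the standing assumption $q(E)>2$ (by the Proposition preceding the examples), this series converges and can be divided out, yielding $v = \big(\sum_{\vec k\in\N^d} q_{\vec k}^2\big)^{-1}\Var(X_{\vec 0})$, which is exactly the claim.

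\textbf{Where the work really is.} The only nontrivial ingredient is the linear representation $X_{\vec 0} = \sum_{\vec j\in\N^d} q_{\vec j} X^*_{-\vec j}$ together with its $L^2$-convergence, so that the interchange of expectation and infinite sum above is legitimate; everything else is a one-line orthogonality computation. For the representation: set $T_N := \sum_{\vec j\in\N^d,\,|\vec j|_\infty\le N} q_{\vec j} X^*_{-\vec j}$. One shows $\E(X_{\vec 0}\mid \sigma_{-N\vec 1}) = X_{\vec 0} - T_N$ by induction, or more transparently: conditioning on $\G_\mu$, $X_{\vec 0}$ equals the label $\varepsilon$ of the first ancestor of $\vec 0$ strictly below $-N\vec 1$, plus the accumulated martingale increments $X^*_{-\vec j}$ picked up along the ancestral path from $\vec 0$ down to that ancestor, and the conditional probability that $\vec 0$'s ancestral line passes through a given site $-\vec j$ is precisely $q_{\vec j}=\P(\vec 0\in A_{-\vec j})$-no wait, it is $\P(-\vec j\in A_{\vec 0}) = q_{\vec 0 - (-\vec j)} = q_{\vec j}$. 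Since $\E(X_{\vec 0}\mid\sigma_{-N\vec 1})\to 0$ in $L^2$ as $N\to\infty$ (because $\bigcap_N \sigma_{-N\vec 1}$ is trivial, $X_{\vec 0}$ being measurable and the ancestral line a.s.\ unbounded in every coordinate), we get $T_N\to X_{\vec 0}$ in $L^2$, which is the representation. I expect the combinatorial bookkeeping of the ancestral-path decomposition — carefully matching the coefficient of $X^*_{-\vec j}$ to $q_{\vec j}$ — to be the main obstacle; once that is in place, the variance identity is immediate.
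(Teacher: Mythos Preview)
Your main computation—taking variances on both sides of $X_{\vec 0}-\E(X_{\vec 0}) = \sum_{\vec j\in\N^d} q_{\vec j} X^*_{-\vec j}$ and using orthogonality and stationarity of the innovations—is valid \emph{once that representation is in hand}. The problem is that in the paper this representation is Lemma~\ref{lem:linproc}, which is proved \emph{after} Lemma~\ref{lem:varX^*0} and whose proof explicitly invokes Lemma~\ref{lem:varX^*0}: the paper shows $\E\big[(\sum_{\vec j\in[\vec 0,n\vec 1]}q_{\vec j}X^*_{-\vec j})^2\big]=\E(X^{*2}_{\vec 0})\sum_{\vec j}q_{\vec j}^2$ and then needs the present lemma to identify this limit as $\Var(X_{\vec 0})$ and conclude the remainder vanishes. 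So an appeal to~\eqref{eq:linear} here is circular.

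You anticipate this and sketch an independent proof of the representation, but there are genuine gaps. First, the claimed identity $X_{\vec 0}-T_N=\E(X_{\vec 0}\mid\sigma_{-N\vec 1})$ is false for $d\ge2$: telescoping $\sum_{\vec j\in[\vec 0,N\vec 1]}\Delta_{-\vec j}(X_{\vec 0})$ (via Lemma~\ref{lem:Delta}) gives, e.g.\ for $d=2$,
\[
T_N = X_{\vec 0}-\E(X_{\vec 0}\mid\sigma_{(1,-N)})-\E(X_{\vec 0}\mid\sigma_{(-N,1)})+\E(X_{\vec 0}\mid\sigma_{(-N,-N)}),
\]
with extra half-space boundary terms. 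Second, even with the correct telescoped expression, you would need \emph{each} of these conditional expectations to converge in $L^2$ to $\E(X_{\vec 0})$; the asserted triviality of $\bigcap_N\sigma_{-N\vec 1}$ (and of the half-space tails $\bigcap_N\sigma_{(1,-N)}$ etc.) is not obvious for this model and would itself require an argument. Minor point: $X_{\vec 0}$ is not centered, so the representation should read $X_{\vec 0}-\E(X_{\vec 0})=\sum q_{\vec j}X^*_{-\vec j}$, though this is harmless for the variance identity.

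The paper bypasses all of this by a direct, self-contained computation: writing $X^*_{\vec 0}=\sum_{\vec k>\vec 0}(\ind_{\{\vec Z_{\vec 0}=\vec k\}}-p_{\vec k})X_{-\vec k}$, squaring, and using independence of $\vec Z_{\vec 0}$ from $(X_{-\vec k})_{\vec k>\vec 0}$ together with $\E(X_{-\vec k}X_{-\vec\ell})=\P(A_{-\vec k}\cap A_{-\vec\ell}\ne\emptyset)\E(X_{\vec 0}^2)+\P(A_{-\vec k}\cap A_{-\vec\ell}=\emptyset)(\E X_{\vec 0})^2$, one finds $\Var(X^*_{\vec 0})=\Var(X_{\vec 0})\,\P(A_{\vec 0}\cap A_{\vec 0}'=\{\vec 0\})$. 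The identity $\sum_{\vec k}q_{\vec k}^2=\P(A_{\vec 0}\cap A_{\vec 0}'=\{\vec 0\})^{-1}$ from~\eqref{eq:EA0A0'} then finishes. This is what subsequently makes the linear representation go through, not the other way around.
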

\begin{proof} Let ${\vec Z}_{\vec 0}$ be the random variable with distribution $\mu$ that gives the first ancestor of ${\vec 0}$. We have $X_{\vec 0}=\sum_{{\vec k}>{\vec 0}}\ind_{\{{\vec Z}_{\vec 0}={\vec k}\}}X_{-{\vec k}}$ and $\E(X_{\vec 0}|\sigma_{\vec 0})=\sum_{{\vec k}>{\vec 0}}p_{\vec k}X_{-{\vec k}}$, where
 $p_{\vec k}=\mu(\{{\vec k}\})$ for all ${\vec k}>{\vec 0}$.
Therefore,
\begin{align}
\E(X^{*2}_{\vec 0})&=\E\left(\left(\sum_{{\vec k}>{\vec 0}}(\ind_{\{{\vec Z}_{\vec 0}={\vec k}\}}-p_{\vec k})X_{-{\vec k}}\right)^2\right)\nonumber\\
&=\sum_{{\vec k}>{\vec 0}}\sum_{{\vec \ell}>{\vec 0}}\E((\ind_{\{{\vec Z}_{\vec 0}={\vec k}\}}-p_{\vec k})(\ind_{\{{\vec Z}_{\vec 0}={\vec \ell}\}}-p_{\vec \ell}))\E(X_{-{\vec k}}X_{-{\vec \ell}}).\label{b0}
\end{align}
But, 
\begin{equation}\label{b1}
\E(X_{-{\vec k}}X_{-{\vec \ell}})=\P(A_{-{\vec k}}\cap A_{-{\vec \ell}}\ne\emptyset)\E(X_{\vec 0}^2)+\P(A_{-{\vec k}}\cap A_{-{\vec \ell}}=\emptyset)\E(X_{\vec 0})^2.
\end{equation}
and 
\begin{equation}\label{b2}
\E((\ind_{\{{\vec Z}_{\vec 0}={\vec k}\}}-p_{\vec k})(\ind_{\{{\vec Z}_{\vec 0}={\vec \ell}\}}-p_{\vec \ell}))=\ind_{\{{\vec k}={\vec \ell}\}}p_{\vec k}-p_{\vec k}p_{\vec \ell}.
\end{equation}
Combining \eqref{b0}, \eqref{b1}, and \eqref{b2}, we get
\begin{align*}
\E &(X^{*2}_{\vec 0})\\
&=\E(X_{\vec 0}^2)\left(1-\sum_{{\vec k}>{\vec 0}}\sum_{{\vec \ell}>{\vec 0}}p_{\vec k}p_{\vec \ell}\P(A_{-{\vec k}}\cap A_{-{\vec \ell}}\ne\emptyset)\right)
-\sum_{{\vec k}>{\vec 0}}\sum_{{\vec \ell}>{\vec 0}}p_{\vec k}p_{\vec \ell}\P(A_{-{\vec k}}\cap A_{-{\vec \ell}}=\emptyset)\E(X_{\vec 0})^2\\
&=(\E(X_{\vec 0}^2)-\E(X_{\vec 0})^2)\sum_{{\vec k}>{\vec 0}}\sum_{{\vec \ell}>{\vec 0}}p_{\vec k}p_{\vec \ell}\P(A_{-{\vec k}}\cap A_{-{\vec \ell}}=\emptyset)\\
&=\Var(X_{\vec 0})\P(A_{\vec 0}\cap A_{\vec 0}'=\{{\vec 0}\}),
\end{align*}
where $A_{\vec 0}'$ is an independent copy of $A_{\vec 0}$.
Finally, as we saw in \eqref{eq:EA0A0'} in the proof of Lemma~\ref{lem:probainter}, $\sum_{{\vec k}\in\N^d}q_{\vec k}^2=\E|A_{\vec 0}\cap A_{\vec 0}'|=\P(A_{\vec 0}\cap A_{\vec 0}'=\{{\vec 0}\})^{-1}$ and the proof is complete.
\end{proof}

Now, for all ${\vec j}\in\Z^d$, we introduce
$$
\Delta_{\vec j}(X)=\sum_{{\vec \varepsilon}\in\{0,1\}^d}(-1)^{d-|{\vec \varepsilon}|_1}\E(X\mid \sigma_{{\vec j}+{\vec \varepsilon}}),
$$
where $|\vec \varepsilon|_1=\varepsilon_1+\ldots+\varepsilon_d$.

Remark that, since $\E(X_{\vec j}\mid \sigma_{{\vec j}+{\vec \varepsilon}})=\E(X_{\vec j}\mid \sigma_{{\vec j}})$ for all ${\vec \varepsilon}\in\{0,1\}^d$ with the exception of ${\vec \varepsilon}={\vec 1}$ for which
$\E(X_{\vec j}\mid \sigma_{{\vec j}+{\vec 1}})=X_{\vec j}$, we have
\begin{equation}\label{Delta_j}
\Delta_{\vec j}(X_{\vec j})=X_{\vec j}-\E(X_{\vec j}\mid \sigma_{\vec j})=X^*_{\vec j}.
\end{equation}
More generally, we have the following lemma.
\begin{lemma}\label{lem:Delta}
 For all ${\vec j}$, ${\vec k}\in\Z^d$,
$$
\Delta_{\vec j}(X_{\vec k})=q_{{\vec k}-{\vec j}}X^*_{\vec j},
$$
which vanishes when ${\vec k}\ngtr{\vec j}$.
\end{lemma}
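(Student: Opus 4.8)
The plan is to compute $\Delta_{\vec j}(X_{\vec k})$ directly from the definition, splitting into the cases $\vec k \ngtr \vec j$ and $\vec k > \vec j$. For the first case, if $\vec k \ngtr \vec j$ then $k_q \le j_q$ for some coordinate $q$, so for every $\vec\varepsilon \in \{0,1\}^d$ we have $(\vec j + \vec\varepsilon)_q = j_q + \varepsilon_q > k_q$ (using $\varepsilon_q \in \{0,1\}$ only forces $\ge$; the strict inequality needs the site $\vec k$ to be below $\vec j + \vec\varepsilon$ in that coordinate, and indeed $k_q \le j_q < j_q+1$, while $k_q \le j_q = (\vec j)_q$). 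I would argue more carefully: recall $\sigma_{\vec m} = \sigma\{X_{\vec \ell} : \vec\ell < \vec m\}$, and $X_{\vec k}$ is $\sigma_{\vec m}$-measurable precisely when $\vec k < \vec m$. When $k_q \le j_q$, for $\vec\varepsilon$ with $\varepsilon_q = 0$ we have $(\vec j+\vec\varepsilon)_q = j_q \ge k_q$, so $X_{\vec k}$ need not be measurable; the key point instead is that $\E(X_{\vec k}\mid\sigma_{\vec j + \vec\varepsilon})$ does not depend on $\varepsilon_q$. This is because conditioning on $\{X_{\vec\ell}:\vec\ell<\vec m\}$, the effect of the $q$-th coordinate of $\vec m$ only enters through sites $\vec\ell$ with $\ell_q = m_q - 1 = j_q - 1 + \varepsilon_q$; but by the ancestral-line description, $X_{\vec k}$ with $k_q \le j_q$ is determined by the value at the first ancestor strictly below $\vec j + \vec\varepsilon$, and since $A_{\vec k}$ already lies weakly below $\vec k \le \vec j$ in coordinate $q$, increasing $m_q$ from $j_q$ to $j_q+1$ adds no new information relevant to $X_{\vec k}$. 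Hence the alternating sum $\sum_{\vec\varepsilon}(-1)^{d-|\vec\varepsilon|_1}\E(X_{\vec k}\mid\sigma_{\vec j+\vec\varepsilon})$ telescopes to zero in the $q$-th coordinate, giving $\Delta_{\vec j}(X_{\vec k}) = 0$.

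For the main case $\vec k > \vec j$, I would use the linear representation of conditional expectations along the ancestral line. Writing $\E(X_{\vec k}\mid\sigma_{\vec m})$ as a sum over the first site of $A_{\vec k}$ strictly below $\vec m$: more precisely, $X_{\vec k} = \sum_{\vec\ell} \P(\vec\ell \in A_{\vec k},\ \vec\ell \text{ is first below } \vec m \mid \ldots)$ — rather, the cleanest route is to use $\E(X_{\vec k}\mid\sigma_{\vec m}) = \sum_{\vec\ell < \vec m} \pi_{\vec m}(\vec k,\vec\ell) X_{\vec\ell}$ where $\pi_{\vec m}(\vec k,\vec\ell)$ is the probability that $\vec\ell$ is the first element of $A_{\vec k}$ that is strictly smaller than $\vec m$. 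Applying $\Delta_{\vec j}$ and linearity, I expect the alternating sum over $\vec\varepsilon\in\{0,1\}^d$ of the operators $X\mapsto\E(X\mid\sigma_{\vec j+\vec\varepsilon})$, applied to $X_{\vec k}$, to collapse to $q_{\vec k - \vec j} X^*_{\vec j}$ by the following reasoning: the $d$-fold difference $\Delta_{\vec j}$ isolates exactly the "new randomness" at site $\vec j$, and $X_{\vec k}$ "sees" site $\vec j$ as an ancestor (i.e.\ $\vec j \in A_{\vec k}$) with probability $q_{\vec k - \vec j} = \P(\vec 0 \in A_{\vec k - \vec j})$, and conditionally on that event the contribution is precisely $X^*_{\vec j}$. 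I would make this rigorous by induction on $d$, or alternatively by the identity $\Delta_{\vec j}(X_{\vec k}) = \Delta_{\vec j}\big(\E(X_{\vec k}\mid\sigma_{\vec j + \vec 1})\big)$ combined with a decomposition of $X_{\vec k}$ according to whether its ancestral line passes through $\vec j$: write $X_{\vec k} = \ind_{\{\vec j \in A_{\vec k}\}} X_{\vec j} + (\text{term independent of the randomness at }\vec j)$, apply $\Delta_{\vec j}$, use $\Delta_{\vec j}(X_{\vec j}) = X^*_{\vec j}$ from~\eqref{Delta_j}, and check that $\E(\ind_{\{\vec j\in A_{\vec k}\}}\mid\sigma_{\vec j}) = q_{\vec k - \vec j}$ while the cross terms vanish under $\Delta_{\vec j}$.

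Concretely, the cleanest argument I would write uses the Markov-type structure: condition first on the event $F_{\vec j} = \{\vec j \in A_{\vec k}\}$ and its complement. On $F_{\vec j}$, the value $X_{\vec k}$ equals $X_{\vec j}$; on $F_{\vec j}^c$, the ancestral line of $\vec k$ avoids $\vec j$, so $X_{\vec k}$ is measurable with respect to $\sigma\{X_{\vec\ell} : \vec\ell \ne \vec j\}$ and, more importantly, is unaffected by which $\vec\varepsilon$-shift of $\sigma_{\vec j}$ we condition on near coordinate directions through $\vec j$, so its $\Delta_{\vec j}$-difference vanishes. Since $F_{\vec j} \in \sigma_{\vec j + \vec 1}$ and in fact $F_{\vec j}$ depends only on the graph structure "above" $\vec j$ (edges at sites $\ge \vec j$, not on $X_{\vec j}$ itself), we get $\Delta_{\vec j}(X_{\vec k}) = \Delta_{\vec j}(\ind_{F_{\vec j}} X_{\vec j}) = \ind_{F_{\vec j}}\Delta_{\vec j}(X_{\vec j}) - (\text{boundary corrections})$, and after taking the appropriate conditional expectations the indicator gets replaced by its probability $q_{\vec k - \vec j}$, yielding $q_{\vec k-\vec j} X^*_{\vec j}$.

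The main obstacle I anticipate is the bookkeeping with the $\sigma$-fields $\sigma_{\vec j + \vec\varepsilon}$: one must be careful that $F_{\vec j} = \{\vec j \in A_{\vec k}\}$ is measurable with respect to all the relevant $\sigma_{\vec j+\vec\varepsilon}$ (or handle its measurability explicitly), that $X_{\vec j}$ is independent of $F_{\vec j}$ given $\sigma_{\vec j}$ with $\E(X_{\vec j}\mid\sigma_{\vec j}\vee\sigma(F_{\vec j})) = \E(X_{\vec j}\mid\sigma_{\vec j})$, and that the "term on $F_{\vec j}^c$" genuinely contributes zero to the alternating sum — this last point is essentially the same telescoping phenomenon as in the $\vec k\ngtr\vec j$ case but now applied conditionally. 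Establishing that the operators $\E(\cdot\mid\sigma_{\vec j+\vec\varepsilon})$ act in a "multiplicatively separable" way on the $d$ coordinate directions, so that $\Delta_{\vec j} = \prod_{q=1}^d(\mathrm{id} - \text{shift}_q)$ in a suitable sense, is the technical heart; I would isolate it as the computation of $\E(X_{\vec k}\mid\sigma_{\vec j+\vec\varepsilon})$ as an explicit linear combination of the $X_{\vec\ell}$ and then observe the telescoping directly.
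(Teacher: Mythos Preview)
Your core idea---decompose $X_{\vec k}$ via the event $F_{\vec j}=\{\vec j\in A_{\vec k}\}$, use $X_{\vec k}=X_{\vec j}$ on $F_{\vec j}$ together with $\Delta_{\vec j}(X_{\vec j})=X^*_{\vec j}$, and argue the complement contributes nothing---is exactly the paper's approach. But two of your steps remain incomplete, and the paper's execution is cleaner in both places.

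First, for $\vec k\ngtr\vec j$ you try to show directly that $\E(X_{\vec k}\mid\sigma_{\vec j+\vec\varepsilon})$ is independent of $\varepsilon_q$ whenever $k_q\le j_q$, so that the alternating sum telescopes. Your justification trails off. The paper instead only treats the easy subcase $\vec k\le\vec j$, $\vec k\ne\vec j$ directly (there every ancestor of $\vec k$ is $<\vec j$, so all the conditional expectations coincide with $\E(X_{\vec k}\mid\sigma_{\vec j})$), and lets the remaining incomparable cases fall out automatically from the main argument since $q_{\vec k-\vec j}=0$ whenever $\vec k\ngtr\vec j$.

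Second, and more substantially, your treatment of $F_{\vec j}^c$ is only heuristic (``unaffected by which $\vec\varepsilon$-shift\ldots''). The paper's clean device is to introduce $\vec a(\vec k,\vec j)$, the first element of $A_{\vec k}$ that is $\le\vec j$. This random site depends only on edges at vertices $\nleq\vec j$ and is therefore independent of $\sigma_{\vec j+\vec 1}$; on $\{\vec a(\vec k,\vec j)=\vec\ell\}$ one has $X_{\vec k}=X_{\vec\ell}$. Hence
\[
\Delta_{\vec j}\!\left(X_{\vec k}\ind_{F_{\vec j}^c}\right)=\sum_{\vec\ell\le\vec j,\ \vec\ell\ne\vec j}\P\big(\vec a(\vec k,\vec j)=\vec\ell\big)\,\Delta_{\vec j}(X_{\vec\ell})=0,
\]
reducing the complement to the already-handled case $\vec\ell\le\vec j$, $\vec\ell\ne\vec j$. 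This replaces your conditional telescoping entirely.

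One last correction: the relevant property of $F_{\vec j}$ is not measurability with respect to $\sigma_{\vec j+\vec\varepsilon}$ but \emph{independence} from $\sigma_{\vec j+\vec 1}$. That is what lets you factor $\E(\ind_{F_{\vec j}}X_{\vec j}\mid\sigma_{\vec j+\vec\varepsilon})=q_{\vec k-\vec j}\,\E(X_{\vec j}\mid\sigma_{\vec j+\vec\varepsilon})$ directly, with no ``boundary corrections''.
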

\begin{proof}
The result is clear when $\vec k=\vec j$ (see \eqref{Delta_j}). 
In the case ${\vec k}\le{\vec j}$, $\vec k\ne\vec j$,  we easily see that $\Delta_{\vec j}(X_{\vec k})=0$.

Now, assume ${\vec k}\nleq{\vec j}$. 
By linearity, we have
$$
\Delta_{\vec j}(X_{\vec k})=\Delta_{\vec j}(X_{\vec k}\ind_{\{{\vec j}\in A_{\vec k}\}})+\Delta_{\vec j}(X_{\vec k}\ind_{\{{\vec j}\notin A_{\vec k}\}}).
$$
Using first that $X_{\vec k}\ind_{\{{\vec j}\in A_{\vec k}\}}=X_{\vec j}\ind_{\{{\vec j}\in A_{\vec k}\}}$, and then that $\{{\vec j}\in A_{\vec k}\}$ is independent of $\sigma_{\vec j+\vec 1}$, we obtain
$$
\Delta_{\vec j}(X_{\vec k}\ind_{\{{\vec j}\in A_{\vec k}\}})=\Delta_{\vec j}(X_{\vec j})\P({\vec j}\in A_{\vec k})=q_{{\vec k}-{\vec j}}X^*_{\vec j}.
$$ 
Denote by $\vec a(\vec k,\vec j)$ the first element of the ancestral line $A_{\vec k}$ that is $\le \vec j$ and remark that $\vec a(\vec k,\vec j)$ is independent of $\sigma_{\vec j + \vec 1}$.
Then,
$$
\Delta_{\vec j}(X_{\vec k}\ind_{\{{\vec j}\notin A_{\vec k}\}})=\sum_{\vec \ell\le \vec j, \vec \ell\ne \vec j}\Delta_{\vec j}(X_{\vec k}\ind_{\{\vec a(\vec k,\vec j)=\vec \ell\}})
=\sum_{\vec \ell\le \vec j, \vec \ell\ne \vec j}\Delta_{\vec j}(X_{\vec \ell})\P(\vec a(\vec k,\vec j)=\vec \ell).
$$
But, $\Delta_{\vec j}(X_{\vec \ell})=0$ for all $\vec \ell\le \vec j$, $\vec \ell\ne \vec j$,
and we finally have
$$
\Delta_{\vec j}(X_{\vec k}\ind_{\{{\vec j}\notin A_{\vec k}\}})=0,
$$
which completes the proof.
\end{proof}

\begin{lemma}\label{lem:linproc}
For all  ${\vec k}\in\Z^d$, the series $\sum_{{\vec j}\in\Z^d}\Delta_{\vec j}(X_{\vec k})$ converges in $L^2$ and
\begin{equation*}
X_{\vec k}-\E(X_{\vec k})=\sum_{{\vec j}\in\Z^d}\Delta_{\vec j}(X_{\vec k}).
\end{equation*}
\end{lemma}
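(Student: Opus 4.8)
The plan is to decompose $X_{\vec k} - \E(X_{\vec k})$ as a telescoping sum of conditional expectations indexed by a half-lattice, and to match this against the increments $\Delta_{\vec j}(X_{\vec k})$ already identified in Lemma~\ref{lem:Delta}. First I would fix $\vec k$ and, for a large box $B_N = [-N,N]^d \cap \Z^d$, consider the partial sum $\sum_{\vec j \in B_N, \vec j < \vec k} \Delta_{\vec j}(X_{\vec k})$. Because $\Delta_{\vec j}(X_{\vec k}) = \sum_{\vec\varepsilon \in \{0,1\}^d} (-1)^{d-|\vec\varepsilon|_1} \E(X_{\vec k} \mid \sigma_{\vec j + \vec\varepsilon})$, summing over $\vec j$ in a rectangular block causes massive cancellation: the inclusion--exclusion structure collapses the sum to a boundary expression. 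Concretely, summing $\Delta_{\vec j}$ over a product set telescopes to $\E(X_{\vec k} \mid \sigma_{\vec m}) - (\text{terms with at least one coordinate at the lower boundary})$ for $\vec m$ the upper corner; since lowering any single coordinate to $-\infty$ sends $\sigma_{\vec m}$ down to a $\sigma$-field that is trivial relative to $X_{\vec k}$ (the ancestral line $A_{\vec k}$ lies entirely in $\{\vec\ell : \vec\ell < \vec k\}$ but escapes any fixed half-space downward with probability $0$ only in the relevant coordinate—this needs the $q(E)>2$ transience-type input, or more simply the $L^2$ convergence below), the boundary contributions vanish in the limit. What remains as $\vec m \uparrow \vec k$ coordinatewise is $X_{\vec k} - \E(X_{\vec k} \mid \text{trivial}) = X_{\vec k} - \E(X_{\vec k})$.

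For the $L^2$ convergence, the cleanest route is orthogonality. The $\Delta_{\vec j}(X_{\vec k}) = q_{\vec k - \vec j} X^*_{\vec j}$ are pairwise orthogonal (the $X^*_{\vec j}$ are, as noted after~\eqref{eq:Xj^*}), so $\sum_{\vec j} \Delta_{\vec j}(X_{\vec k})$ converges in $L^2$ as soon as $\sum_{\vec j \in \Z^d} q_{\vec k - \vec j}^2 \Var(X^*_{\vec j}) < \infty$, i.e. $\Var(X^*_{\vec 0}) \sum_{\vec i \in \N^d} q_{\vec i}^2 < \infty$, which is exactly the convergence of $\sum_{\vec i \in \N^d} q_{\vec i}^2$ guaranteed under $q(E)>2$ (Proposition following Lemma~\ref{integrabilite}, together with Lemma~\ref{lem:varX^*0}). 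So the series converges in $L^2$ to \emph{some} limit $Y_{\vec k}$; it remains to identify $Y_{\vec k}$ with $X_{\vec k} - \E(X_{\vec k})$. For that I would show the partial sums over $B_N$ converge to $X_{\vec k} - \E(X_{\vec k})$ in $L^2$ by the telescoping computation above: $\sum_{\vec j \in B_N} \Delta_{\vec j}(X_{\vec k}) = X_{\vec k} - \sum_{\vec\varepsilon \neq \vec 1}(\pm) \E(X_{\vec k} \mid \sigma_{-N\vec 1 + \vec e + \dots})$-type boundary terms, each of which is a conditional expectation given a $\sigma$-field $\sigma_{\vec m}$ with $\min_\ell m_\ell = -N$; by the martingale/backward-convergence theorem $\E(X_{\vec k}\mid \sigma_{\vec m}) \to \E(X_{\vec k})$ in $L^2$ as $\min_\ell m_\ell \to -\infty$ (the tail $\sigma$-field $\bigcap_{\vec m} \sigma_{\vec m}$ is trivial since $A_{\vec k}$ is a.s. finite in each coordinate direction below a bounded level with the escaping probability to $-\infty$ being $1$), and the inclusion--exclusion signs make these boundary terms cancel down to the single constant $\E(X_{\vec k})$ in the limit.

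The main obstacle I anticipate is handling the boundary terms of the telescoping sum carefully: one must verify that $\E(X_{\vec k} \mid \sigma_{\vec m}) \to \E(X_{\vec k})$ (in $L^2$) whenever even a single coordinate $m_\ell \to -\infty$ while the others stay fixed, since the inclusion--exclusion expansion of $\sum_{\vec j \in B_N}\Delta_{\vec j}$ produces mixed corner terms where some coordinates are $\approx N$ and others $\approx -N$. This is where the structure of the ancestral line matters: $A_{\vec k}$ is the range of a random walk with steps in $\N_*^d$, so in \emph{every} coordinate it drifts to $-\infty$, hence conditioning on $X_{\vec\ell}$ for the first $\vec\ell \in A_{\vec k}$ with $\vec\ell \nleq \vec m$ forces, as $m_\ell \to -\infty$, that $\vec\ell$ itself has $\ell$-th coordinate $\to -\infty$ a.s., decoupling it from $X_{\vec k}$; combined with the $L^2$ bound $\|\E(X_{\vec k}\mid\sigma_{\vec m})\|_2 \le \|X_{\vec k}\|_2$ and dominated convergence, the boundary terms vanish. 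Once that lemma-level fact is in place, the identity follows by letting $N \to \infty$ and invoking uniqueness of $L^2$ limits.
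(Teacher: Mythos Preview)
Your $L^2$-convergence argument for the series matches the paper's: orthogonality of the $X^*_{\vec j}$ together with $\sum_{\vec i}q_{\vec i}^2<\infty$ and Lemma~\ref{lem:varX^*0}. Where you diverge is in identifying the limit. The paper does \emph{not} telescope and then kill boundary terms. Instead it observes that, thanks to the commuting property
\[
\E\bigl(\E(X_{\vec 0}\mid\sigma_{\vec l})\mid\sigma_{\vec j}\bigr)=\E\bigl(X_{\vec 0}\mid\sigma_{\min(\vec l,\vec j)}\bigr),\qquad \vec l,\vec j\le\vec 0,
\]
the partial sum $S_n:=\sum_{\vec j\in[\vec 0,n\vec 1]}\Delta_{-\vec j}(X_{\vec 0})$ is orthogonal to the residual $X_{\vec 0}-S_n$. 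Hence
\[
\bigl\|X_{\vec 0}-\E X_{\vec 0}-S_n\bigr\|_2^2=\Var(X_{\vec 0})-\|S_n\|_2^2
=\Var(X_{\vec 0})-\Var(X^*_{\vec 0})\sum_{\vec j\in[\vec 0,n\vec 1]}q_{\vec j}^2\to 0
\]
by Lemma~\ref{lem:varX^*0}. No tail-$\sigma$-field analysis is needed.

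Your telescoping is algebraically correct and does reduce the partial sum over a box to $2^d$ corner terms $\E(X_{\vec k}\mid\sigma_{\vec c(\vec\varepsilon)})$, but the step ``$\E(X_{\vec k}\mid\sigma_{\vec m})\to\E(X_{\vec k})$ in $L^2$ whenever $\min_\ell m_\ell\to-\infty$'' is a genuine gap as written. Your justification (``the ancestral line drifts to $-\infty$, hence the tail $\sigma$-field is trivial'') does not go through directly: the $\sigma$-fields $\sigma_{\vec m}$ are generated by the $X_{\vec i}$'s, which are \emph{not} independent (they are coupled through the random component structure of $\G_\mu$), so neither Kolmogorov's $0$--$1$ law nor a naive backward-martingale argument applies. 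In fact the component of $\vec k$ has vertices in $\{\vec i<\vec m\}$ for every $\vec m$, so one must argue that knowing the colouring of $\{\vec i<\vec m\}$ gives asymptotically no information about \emph{which} colour class $\vec k$ belongs to---this is not obvious and your sketch does not address it. The statement is true (one can recover it \emph{a posteriori} from Lemma~\ref{lem:linproc} itself by writing $\E(X_{\vec k}-\E X_{\vec k}\mid\sigma_{\vec m})=\sum_{\vec j<\vec m}q_{\vec k-\vec j}X^*_{\vec j}$), but using it to prove the lemma is circular. The paper's orthogonality trick sidesteps the issue entirely and is the cleaner route.
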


\begin{proof}
First, remark that by stationarity we may only consider the case where ${\vec k}={\vec 0}$. The sum in the statement can be write as $\sum_{{\vec j}\in\N^d}\Delta_{-\vec j}(X_{\vec 0})$ since the other terms vanish. We denote by ${n\vec 1}$ the vector $(n,\ldots,n)$ where $n\in\N$. By Lemma~\ref{lem:Delta}, we have
$$
\E\left(\left(\sum_{{\vec j}\in[{\vec 0},{n\vec1}]}\Delta_{-{\vec j}}(X_{\vec 0})\right)^2\right)=\E(X^{*2}_{\vec 0})\left(\sum_{{\vec j}\in[{\vec 0},{n\vec1}]}q_{\vec j}^2\right)
$$
and the right hand side converges to $\Var(X_{\vec 0})$ as $n\to\infty$ thanks to Lemma~\ref{lem:varX^*0}.
Now, by construction, the random variables $ \sum_{{\vec j}\in[{\vec 0},{n\vec1}]}\Delta_{-{\vec j}}(X_{\vec 0})$ and $X_{\vec 0}-\sum_{{\vec j}\in[{\vec 0},{n\vec1}]}\Delta_{-{\vec j}}(X_{\vec 0})$
are orthogonal. 
To see this last fact, note that for all $\vec l\le \vec 0$ and $\vec j \le \vec 0$,
$
\E\left( \E( X_{\vec 0} \mid \sigma_{\vec l} ) \mid \sigma_{\vec j} \right) = \E\left( X_{\vec 0} \mid \sigma_{\min\{\vec l, \vec j\}}\right),
$
where the minimum is taken on each coordinate.
Thus, we get
$$
\E\left(\left(X_{\vec 0}-\E(X_{\vec 0})-\sum_{{\vec j}\in[{\vec 0},{n\vec1}]}\Delta_{-{\vec j}}(X_{\vec 0})\right)^2\right)
=\Var(X_{\vec 0})-\E\left(\left(\sum_{{\vec j}\in[{\vec 0},{n\vec1}]}\Delta_{-{\vec j}}(X_{\vec 0})\right)^2\right)\to 0,
$$
as $n\to \infty$.
\end{proof}

From Lemma~\ref{lem:linproc} and Lemma~\ref{lem:Delta}, we get that $(X_{\vec j}-\E(X_{\vec j}))_{{\vec j}\in\Z^d}$ is the linear random field given by the innovations $(X^*_{\vec j})_{{\vec j}\in\Z^d}$ and the filter $(q_{\vec j})_{{\vec j}\in\Z^d}$. That is, for all $\vec k\in\Z^d$,
$$
X_{\vec k}-\E(X_{\vec k})=\sum_{{\vec j}\in\Z^d}q_{{\vec k}-{\vec j}}X^*_{\vec j}.
$$

Hence, we proved the following proposition.
\begin{proposition}\label{prop:linearProc}
For all $\vec n \in \N^d$,
\begin{equation*}
S_{\vec n}-\E(S_{\vec n})=\sum_{{\vec j}\in\Z^d}b_{{\vec n},{\vec j}}X^*_{\vec j}.
\end{equation*}
where $b_{{\vec n},{\vec j}}=\sum_{{\vec k}\in[{\vec 0},{\vec n}-{\vec 1}]}q_{{\vec k}-{\vec j}}$. Further, for any ${\vec n}\in\N^d$, $b_{\vec n}=(b_{{\vec n},{\vec j}})_{{\vec j}\in\Z^d}$ belongs to $\ell^2(\Z^d)$, that is $\|b_{{\vec n}}\|^2:=\sum_{{\vec j}\in\Z^d}b_{{\vec n},{\vec j}}^2<\infty$.
\end{proposition}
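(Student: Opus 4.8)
The plan is to obtain the representation as an immediate consequence of the linear-random-field identity in Lemma~\ref{lem:linproc}, and then to check square-summability of the coefficient sequence using that $\sum_{\vec k\in\N^d}q_{\vec k}^2<\infty$, which holds because $q(E)>2$.

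First I would recall that, by Lemma~\ref{lem:linproc} together with Lemma~\ref{lem:Delta}, for every $\vec k\in\Z^d$ the series $\sum_{\vec j\in\Z^d}q_{\vec k-\vec j}X^*_{\vec j}$ converges in $L^2$ to $X_{\vec k}-\E(X_{\vec k})$. Summing this identity over the finitely many indices $\vec k\in[\vec 0,\vec n-\vec 1]$ and interchanging the finite outer sum with the infinite inner sum yields
\[
S_{\vec n}-\E(S_{\vec n})=\sum_{\vec k\in[\vec 0,\vec n-\vec 1]}\bigl(X_{\vec k}-\E(X_{\vec k})\bigr)=\sum_{\vec j\in\Z^d}\Bigl(\sum_{\vec k\in[\vec 0,\vec n-\vec 1]}q_{\vec k-\vec j}\Bigr)X^*_{\vec j}=\sum_{\vec j\in\Z^d}b_{\vec n,\vec j}X^*_{\vec j},
\]
which is the first assertion. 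The only point needing a word of justification is the interchange of sums: since the box $[\vec 0,\vec n-\vec 1]$ is finite, a finite linear combination of $L^2$-convergent series converges in $L^2$ to the corresponding combination of limits, so the partial sums over $\vec j$ converge with the stated limit; there is no genuine double-series subtlety.

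For the $\ell^2$ statement I would prefer the following argument, which ties it directly to facts already established. The partial sum $S_{\vec n}$ is a finite sum of the bounded random variables $X_{\vec j}$, hence $S_{\vec n}-\E(S_{\vec n})\in L^2$. The innovations $(X^*_{\vec j})_{\vec j\in\Z^d}$ are orthogonal and, by stationarity and Lemma~\ref{lem:varX^*0}, have common variance $\Var(X^*_{\vec 0})=\bigl(\sum_{\vec k\in\N^d}q_{\vec k}^2\bigr)^{-1}\Var(X_{\vec 0})$, which is strictly positive and finite because $q(E)>2$ (by the proposition preceding this one, itself a consequence of Lemma~\ref{Qhomogene} and Lemma~\ref{integrabilite}). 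Taking the $L^2$-norm of the representation just obtained and using orthogonality gives $\E\bigl[(S_{\vec n}-\E(S_{\vec n}))^2\bigr]=\|b_{\vec n}\|^2\,\Var(X^*_{\vec 0})$, and since the left-hand side is finite and $\Var(X^*_{\vec 0})>0$, we conclude $\|b_{\vec n}\|^2<\infty$. As an alternative one can argue purely combinatorially: $b_{\vec n,\vec j}$ is a sum of at most $N:=\prod_{q=1}^d n_q$ terms, so by Cauchy--Schwarz $b_{\vec n,\vec j}^2\le N\sum_{\vec k\in[\vec 0,\vec n-\vec 1]}q_{\vec k-\vec j}^2$, and summing over $\vec j\in\Z^d$ and swapping the two nonnegative sums bounds $\|b_{\vec n}\|^2$ by $N^2\sum_{\vec m\in\N^d}q_{\vec m}^2<\infty$.

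In short, there is no hard step here: the proposition is essentially bookkeeping on top of Lemma~\ref{lem:linproc}. If anything, the one place to be careful is not to settle for ``each $b_{\vec n,\vec j}$ is finite'' but to establish genuine square-summability of the whole sequence, which is why I would route the second assertion through the orthogonality of the innovations and the already-proven finiteness of $\sum_{\vec k\in\N^d}q_{\vec k}^2$.
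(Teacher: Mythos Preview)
Your argument is correct and follows the same route as the paper, which simply states that the proposition follows from Lemma~\ref{lem:linproc} and Lemma~\ref{lem:Delta} without giving any further detail. You actually supply more justification than the paper does, in particular for the $\ell^2$ claim: the paper leaves $\|b_{\vec n}\|^2<\infty$ entirely implicit, whereas you spell out both the orthogonality/variance argument and a direct Cauchy--Schwarz bound.
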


\section{A central limit theorem}\label{sec:CLT}
We still assume $\mu\in \D(E,\nu)$, where $\nu$ is a full $E$-operator stable law on $\R_+^d$ with $E=\mbox{diag}(1/\alpha_1,\ldots,1/\alpha_d)$, with $\alpha_i\in(0,1)$ and $\alpha_1\in(0,1/2)$ if $d=1$. The random field
$(X_{\vec j})_{\vec j\in \Z^d}$ is the random field defined in Section~\ref{randomfield}.
In view of Proposition~\ref{prop:linearProc}, we want to establish central limit theorems for the sequences of $L^2$ random variables
$$
\sum_{{\vec j}\in\Z^d}c_{n,{\vec j}}X^*_{\vec j},\qquad n\ge 1
$$  
with general coefficients $c_n=(c_{n,\vec j})_{\vec j\in\Z^d}\in \ell^2(\Z^d)$. Recall the definition of $X^*_{\vec j}$ in \eqref{eq:Xj^*}.
It turns out that a simple assumption on $c_n$ for a central limit theorem is given by
\begin{equation}\label{eq:sup}
  \lim_{n\to\infty}\sup_{\vec j\in\Z^d}\frac{|c_{n,\vec j}|}{\|c_n\|}=0.
\end{equation}
The aim of this section is to prove the following central limit theorem.
\begin{theorem}\label{thm:clt}
 Let $c_n=(c_{n,\vec j})_{\vec j\in\Z^d}$ be a sequence in $\ell^2(\Z^d)$ satisfying \eqref{eq:sup}.
Then 
\[
 \frac{1}{\|c_n\|}\sum_{{\vec j}\in \Z^d}c_{n,\vec j}X^*_{\vec j} \weakto \cN(0,\sigma^2_X) \mbox{ as }n\to\infty,
\]
where 
\equh\label{eq:sigma}
\sigma_X^2 := \Var(X_{\vec 0}^*)= \frac{\Var(X_{\vec 0})}{\sum_{{\vec k}\in\N^d}q_{\vec k}^2}.
\eque
\end{theorem}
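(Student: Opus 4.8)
The plan is to prove Theorem~\ref{thm:clt} via the martingale central limit theorem, exploiting the multiparameter martingale-difference structure of $(X^*_{\vec j})_{\vec j\in\Z^d}$ established in Section~\ref{randomfield}. First I would fix a total order on $\Z^d$ refining the partial order $<$ (for instance, order first by $|\vec j|_\infty$, breaking ties lexicographically), so that the partial sums $T_n := \|c_n\|^{-1}\sum_{\vec j} c_{n,\vec j} X^*_{\vec j}$ become, for each fixed $n$, a sum of martingale differences with respect to the filtration generated by enumerating $\Z^d$ in that order. The key structural facts are that $\E(X^*_{\vec j}\mid\overline\sigma_{\vec j})=0$, that $X^*_{\vec j}$ is $\overline\sigma_{\vec j+\vec e_q}$-measurable for every $q$, and that the $X^*_{\vec j}$ are pairwise orthogonal with common variance $\sigma_X^2 = \Var(X^*_{\vec 0})$; hence $\E T_n^2 = \sigma_X^2$ for all $n$, which already pins down the candidate limiting variance. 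The issue is that along the chosen enumeration the increments need not be exact martingale differences unless one is careful about which $\sigma$-field to condition on; I would instead use the filtration $\F_{n,m} = \sigma\{X^*_{\vec j} : \vec j \text{ among the first } m \text{ in the order}\}$ together with the orthogonality and the measurability relations to check that the array $(c_{n,\vec j}X^*_{\vec j}/\|c_n\|)$ forms a martingale-difference array in the appropriate double-indexed sense.

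Next I would verify the two hypotheses of the martingale CLT (say, in the form of Hall--Heyde or McLeish). The conditional Lindeberg condition: $\sum_{\vec j} \E\!\left[ (c_{n,\vec j}X^*_{\vec j}/\|c_n\|)^2 \ind_{\{|c_{n,\vec j}X^*_{\vec j}|>\varepsilon\|c_n\|\}} \right] \to 0$. Since $X^*_{\vec 0}$ is bounded (it is the difference of the bounded variable $X_{\vec 0}\in\{-1,1\}$ and a conditional expectation, so $|X^*_{\vec j}|\le 2$ a.s.), we get $|c_{n,\vec j}X^*_{\vec j}| \le 2|c_{n,\vec j}| \le 2\|c_n\| \sup_{\vec k}|c_{n,\vec k}|/\|c_n\|$, which by~\eqref{eq:sup} is eventually smaller than $\varepsilon\|c_n\|$ uniformly in $\vec j$; hence the indicator vanishes for $n$ large and the Lindeberg condition holds trivially. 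This is exactly where assumption~\eqref{eq:sup} enters, and it is the clean payoff of having bounded innovations. The second hypothesis is convergence of the sum of conditional variances: $V_n := \sum_{\vec j} \E\!\left[(c_{n,\vec j}X^*_{\vec j})^2 \mid \F_{n, \cdot}\right]/\|c_n\|^2 \to \sigma_X^2$ in probability. I would handle this by showing $\E V_n = \sigma_X^2$ exactly and $\Var(V_n)\to 0$.

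The variance computation is the step I expect to be the main obstacle, because the conditional second moments $\E[(X^*_{\vec j})^2\mid\text{past}]$ are themselves random, and controlling $\Var(V_n)$ requires estimating fourth-order mixed expectations $\E[(X^*_{\vec j})^2(X^*_{\vec k})^2]$ and their conditional analogues across pairs $\vec j\ne\vec k$. Here I would use boundedness again, plus the fact that the dependence structure of $(X_{\vec i})$ (hence of the $X^*_{\vec j}$) is governed by the random graph $\G_\mu$: two innovations $X^*_{\vec j}$ and $X^*_{\vec k}$ are "locally determined" by the ancestral-line structure near $\vec j$ and $\vec k$, so the covariance of $(X^*_{\vec j})^2$ and $(X^*_{\vec k})^2$ decays and, when weighted by $c_{n,\vec j}^2 c_{n,\vec k}^2/\|c_n\|^4$, the off-diagonal contributions are dominated by $(\sup_{\vec j}|c_{n,\vec j}|/\|c_n\|)^2 \to 0$ times a bounded factor. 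Combining the vanishing Lindeberg term with $V_n \to \sigma_X^2$ in $L^1$ (hence in probability) yields $T_n \weakto \cN(0,\sigma_X^2)$, and the identification $\sigma_X^2 = \Var(X_{\vec 0})/\sum_{\vec k\in\N^d} q_{\vec k}^2$ is precisely Lemma~\ref{lem:varX^*0}. An alternative, possibly cleaner route to the conditional-variance condition would bypass the conditional computation entirely by invoking the orthogonality and a direct fourth-moment bound $\E[(T_n^2 - \sigma_X^2)^2]\to 0$ style argument adapted to the martingale-difference array; I would pursue whichever of the two gives tighter control on the cross terms indexed by the graph structure.
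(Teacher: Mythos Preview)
Your approach is essentially the paper's: apply a McLeish-type martingale CLT to the array $\xi_{n,\vec j}=c_{n,\vec j}X^*_{\vec j}/\|c_n\|$, use $|X^*_{\vec j}|\le 2$ together with \eqref{eq:sup} for the negligibility conditions, and for the variance condition show $\|c_n\|^{-2}\sum_{\vec j}c_{n,\vec j}^2 X^{*2}_{\vec j}\to\sigma_X^2$ in $L^2$ by proving $\Cov(X^{*2}_{\vec i},X^{*2}_{\vec j})\to 0$ as $|\vec i-\vec j|_\infty\to\infty$ via the graph structure (the paper packages this last step as Lemma~\ref{lem:ergodicity}, and uses McLeish's unconditional sum-of-squares condition rather than the conditional-variance version you mention first).

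The one point to tighten is your choice of total order: ordering by $|\vec j|_\infty$ does \emph{not} refine the coordinatewise partial order on $\Z^d$ (for instance $(-1,-1)<(0,0)$ but $|(-1,-1)|_\infty>|(0,0)|_\infty$), so the martingale-difference property along your enumeration is not justified, and you correctly sense this. The paper resolves it cleanly by taking the lexicographic order $\prec$ and the filtration $\F_{\vec j}=\sigma\{X_{\vec k}:\vec k\prec\vec j\}$ generated by the $X_{\vec k}$ themselves; the key observation is the sandwich $\sigma_{\vec j}\subset\F_{\vec j}\subset\overline\sigma_{\vec j}$, which together with $\E(X_{\vec j}\mid\sigma_{\vec j})=\E(X_{\vec j}\mid\overline\sigma_{\vec j})$ immediately gives $\E(X^*_{\vec j}\mid\F_{\vec j})=0$. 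With that replacement, your outline coincides with the paper's proof.
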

\begin{proof}
Recall that we write $\sigma_{\vec j}=\sigma\{X_{\vec k}\mid \vec k<\vec j\}$ and $\overline\sigma_{\vec j}=\sigma\{X_{\vec k}\mid \vec k\ngeq\vec j\}$, and we already have seen for all $\vec j\in\Z^d$,
$$
\E(X_{\vec j}\mid \sigma_{\vec j})=\E(X_{\vec j}\mid \overline\sigma_{\vec j}).
$$
We now consider the $\sigma$-fields $\F_{\vec j}=\sigma\{X_{\vec k}\mid \vec k \prec \vec j\}$, where $\prec$ denotes the lexicographical order on $\Z^d$. We have  $\sigma_{\vec j}\subset \F_{\vec j}\subset \overline\sigma_{\vec j}$ for all $\vec j\in\Z^d$ and thus, for all $\vec j\in\Z^d$, we also have
$$
\E(X_{\vec j}\mid \F_{\vec j})=\E(X_{\vec j}\mid \sigma_{\vec j}).
$$

In general, if $\{\F_{\vec i}\}_{{\vec i}\in\Z^d}$ is a filtration such that $\F_{\vec i}\subset\F_{\vec j}$ if ${\vec i}\prec{\vec j}$, for all ${\vec i},{\vec j}\in\Z^d$, we say that integrable random variables $(\xi_{\vec i})_{{\vec i}\in\Z^d}$ are martingale differences with respect to $\{\F_{\vec i}\}_{{\vec i}\in\Z^d}$ if 
$$
\xi_{\vec i}\in\F_{\vec i + {\vec e}_d}\;\mbox{ and }\;\E(\xi_{\vec i}\mid\F_{{\vec i}}) = 0\;\mbox{ for all }{\vec i}\in\Z^d,
$$
where ${\vec e}_d$ is the $d$-th vector of the canonical basis of $\R^d$. 

Thus, by definition (see \eqref{eq:Xj^*}), the random field $(X_{\vec j}^*)_{\vec j\in\Z^d}$ is composed of martingale differences with respect to the filtration $\{\F_{\vec j}\}_{{\vec j}\in\Z^d}$ defined above. As a consequence we will be able to use the following theorem which is an obvious adaptation of a theorem of \citet{mcleish74dependent} for triangular array of $\Z$-indexed martingale differences.
\begin{theorem}[\citet{mcleish74dependent}]\label{thm:McLeish}
Let $(\xi_{n, \vec j})_{n\in\N, \vec j\in\Z^d}$ be a collection of random variables satisfying
$\sum_{\vec j\in\Z^d}\xi_{n, \vec j}\in L^2$ for all $n\in\N$.
Assume that for each $n\in\N$, $(\xi_{n, \vec j})_{\vec j\in\Z^d}$ are martingale differences with respect to a filtration $\{\F_{n,\vec j}\}_{\vec j\in\Z^d}$ in the lexicographical order.
If
\begin{enumerate}[(i)]
 \item\label{McL1} $\lim_{n\to\infty}\max_{\vec j\in\Z^d}|\xi_{n, \vec j}| = 0$ in probability,

\item \label{McL2} $\sup_{n\in\N}\E\left(\max_{\vec j\in\Z^d}\xi_{n, \vec j}^2\right)<\infty$,

\item \label{McL3} $\lim_{n\to\infty}\sum_{\vec j\in\Z^d}\xi_{n, \vec j}^2 = \sigma^2>0$ in probability,
\end{enumerate}
\noindent then 
$$\sum_{\vec j\in\Z^d}\xi_{n, \vec j}\weakto \cN(0,\sigma^2) \mbox{ as }n\to\infty.$$
\end{theorem}
\begin{proof}
Let us explain how one can derive this theorem from Theorem 2.3 in \citep{mcleish74dependent} which is stated for finite sets of random variables at each $n$. First, 
since $\sum_{\vec j\in\Z^d}\xi_{n, \vec j}\in L^2$, one can find a sequence of finite rectangles $\Gamma_n$ in $\Z^d$ such that $\sum_{\vec j\in\Z^d\setminus \Gamma_n}\xi_{n, \vec j}$ converges to $0$ in $L^2$ as $n\to\infty$. Thus, the conclusion of Theorem~\ref{thm:McLeish} holds as soon as 
 $$\sum_{\vec j\in\Gamma_n}\xi_{n, \vec j}\weakto \cN(0,\sigma^2) \mbox{ as }n\to\infty.$$
Furthermore, for each $n$, using the lexicographical order on the finite set $\Gamma_n$, one can re-index the random variables $(\xi_{n,\vec j})_{\vec j \in \Gamma_n}$ and the $\sigma$-fields $\{\F_{n,\vec j}\}_{\vec j\in \Gamma_n}$ in order to fit with the statement of \citep[Theorem 2.3]{mcleish74dependent}. Now, it suffices to observe that conditions \iteqref{McL1}, \iteqref{McL2}, and \iteqref{McL3} imply those of  \citep[Theorem 2.3]{mcleish74dependent}.
\end{proof}
Theorem~\ref{thm:clt} will be established by application of Theorem~\ref{thm:McLeish} to 
$$
\xi_{n, \vec j}:= \frac{c_{n,{\vec j}}}{\|c_n\|}X^*_{\vec j} \mbox{ and }
\F_{n,\vec j}:=\F_{\vec j}=\sigma\{X_{\vec k}\mid \vec k \prec \vec j\}.
$$
Note that $|X_{\vec j}^*|\le 2$ and thus the conditions \iteqref{McL1} and \iteqref{McL2} can be reduced to a condition on the coefficients $c_{n,\vec j}$. Indeed, \iteqref{McL1} and \iteqref{McL2} are satisfied as soon as \eqref{eq:sup} holds.
Condition \iteqref{McL3} can be derived from the following lemma.
\begin{lemma}\label{lem:ergodicity}
Let $c_n=(c_{n,\vec j})_{\vec j\in\Z^d}$ be a sequence in $\ell^2(\Z^d)$ such that \eqref{eq:sup} holds. Then, 
\begin{equation*}
\lim_{{n}\to\infty}\frac{1}{\|c_n\|^2}\sum_{{\vec j}\in \Z^d}c_{n,\vec j}^2X^{*2}_{{\vec j}} = \E(X^{*2}_{{\vec 0}}) \mbox{ in } L^2,
\end{equation*}
\end{lemma}
\begin{proof}
We start by showing that 
\begin{equation}\label{eq:covX*j}
\Cov(X^{*2}_{\vec i},X^{*2}_{\vec j})\to 0 ,\mbox{ as } |{\vec i}-{\vec j}|_\infty\to\infty.
\end{equation}
Observe that $X^{*}_{{\vec j}}=X_{\vec j}-\sum_{{\vec \ell}>{\vec 0}}p_{\vec \ell}X_{{\vec j}-{\vec \ell}}$ and let $X^{*}_{{\vec j},k}=X_{\vec j}-\sum_{{\vec \ell}\in \{1,\ldots ,k\}^d}p_{\vec \ell}X_{{\vec j}-{\vec \ell}}$.
For any ${\vec j}\in\Z^d$, using that $|X_{\vec j}^*|\le 2$, we get
\[
\left|X^{*2}_{\vec j}-X^{*2}_{{\vec j},k}\right|
\le 4\left|X^{*}_{\vec j}-X^{*}_{{\vec j},k}\right|
=4\left|\sum_{{\vec \ell}\in [1,\infty)^d\setminus[1,k]^d}p_{\vec \ell}X_{{\vec j}-{\vec \ell}}\right|
\]
Thus, since $|X_{\vec j}|=1$ for all ${\vec j}\in\Z^d$, 
\begin{equation}
\sup_{{\vec j}\in\Z^d} \left|X^{*2}_{\vec j}-X^{*2}_{{\vec j},k}\right|\le 4\, \mu\left([1,\infty)^d\setminus[1,k]^d\right) \mbox{ a.s., for all }k>0.\label{eq:erg:2}
\end{equation}
Now, introduce
$$
R_{{\vec i},{\vec j},k}=\left\{\left(\bigcup_{{\vec \ell}\in{\vec i}-[0,k]^d}A_{\vec \ell}\right)\cap\left( \bigcup_{{\vec m}\in{\vec j}-[0,k]^d}A_{\vec m} \right)=\emptyset\right\}.
$$
We have
$$
\P(R_{{\vec i},{\vec j},k}^c)\le \sum_{{\vec \ell}\in{\vec i}-[0,k]^d}\sum_{{\vec m}\in{\vec j}-[0,k]^d} \P(A_{\vec \ell}\cap A_{\vec m}\ne \emptyset).
$$
But, from Lemma~\ref{lem:probainter} \iteqref{lem:probainter:1}, we see that $\P(A_{\vec \ell}\cap A_{\vec m}\ne \emptyset)\to 0$ as $|{\vec \ell}-{\vec m}|_\infty\to \infty$ and thus, for any $k\ge 1$,
\begin{equation}\label{eq:erg:3}
\P(R_{{\vec i},{\vec j},k}^c)\to 0, \mbox{ as }|{\vec i}-{\vec j}|_\infty\to\infty.
\end{equation}
Fix $\varepsilon>0$ and, using \eqref{eq:erg:2}, let $k\in\N$ be such that $\sup_{{\vec j}\in\Z^d} |X^{*2}_{\vec j}-X^{*2}_{{\vec j},k}|<\varepsilon$. From \eqref{eq:erg:3}, for $|{\vec i}-{\vec j}|_\infty $ large enough, we have $\P(R_{{\vec i},{\vec j},k}^c)<\varepsilon$ and we obtain
\begin{align*}
 \E(X^{*2}_{\vec i}X^{*2}_{\vec j})
&= \E(X^{*2}_{{\vec i},k}X^{*2}_{{\vec j},k}) +  O(\varepsilon)= \E(X^{*2}_{{\vec i},k}X^{*2}_{{\vec j},k}\mid R_{{\vec i},{\vec j},k}) + O(\varepsilon) \\
&= \E(X^{*2}_{{\vec i},k} \mid R_{{\vec i},{\vec j},k})\E(X^{*2}_{{\vec j},k}\mid R_{{\vec i},{\vec j},k}) + O(\varepsilon) = \E(X^{*2}_{{\vec i},k})\E(X^{*2}_{{\vec j},k}) +  O(\varepsilon) \\
&= \E(X^{*2}_{{\vec i}})\E(X^{*2}_{{\vec j}}) +  O(\varepsilon).
\end{align*}
This proves \eqref{eq:covX*j}.

To prove the lemma, fix $\varepsilon>0$ and let $K>0$ be such that $|\Cov(X_{\vec j}^{*2},X^{*2}_{\vec i})|\le \varepsilon$ as soon as $|{\vec i}-{\vec j}|_\infty>K$. One has, 
\begin{align*}
&\E\left(\frac{1}{\|c_n\|^2}\sum_{{\vec j}\in \Z^d}c_{n,\vec j}^2X^{*2}_{{\vec j}}-\E(X_{\vec 0}^{*2})\right)^2\\
&=\sum_{{\vec j}\in \Z^d}\frac{c_{n,\vec j}^2}{\|c_n\|^2}\sum_{{\vec i}\in \Z^d}\frac{c_{n,\vec i}^2}{\|c_n\|^2}\Cov(X_{\vec j}^{*2},X^{*2}_{\vec i})\\
&\le\sum_{{\vec j}\in \Z^d} \frac{c_{n,\vec j}^2}{\|c_n\|^2} \sum_{|{\vec i}-{\vec j}|_\infty\le K}\frac{c_{n,\vec i}^2}{\|c_n\|^2}|\Cov(X_{\vec j}^{*2},X^{*2}_{\vec i})|
+ \varepsilon \sum_{{\vec j}\in \Z^d}\frac{c_{n,\vec j}^2}{\|c_n\|^2} \sum_{|{\vec i}-{\vec j}|_\infty> K}\frac{c_{n,\vec i}^2}{\|c_n\|^2}\\
&\le\sup_{\vec k\in\Z^d}\frac{c_{n,\vec k}^2}{\|c_n\|^2} \sum_{|{\vec i}-{\vec 0}|_\infty\le K}|\Cov(X_{\vec 0}^{*2},X^{*2}_{\vec i})|
+ \varepsilon,
\end{align*}
and the first term of the right hand side goes to $0$ as $n\to\infty$ because $|\Cov(X_{\vec 0}^{*2},X^{*2}_{\vec i})|$ is bounded and $\sup_{\vec k\in\Z^d}{c_{n,\vec k}^2}=o({\|c_n\|^2})$ by assumption.
\end{proof}
Thus the conditions \iteqref{McL1}, \iteqref{McL2}, and \iteqref{McL3} are satisfied with $\sigma_X^2=\E(X_{\vec 0}^{*2})=\Var(X_{\vec 0}^*)=(\sum_{{\vec k}\in\N^d}q_{\vec k}^2)^{-1}\Var(X_{\vec 0})$ by Lemma \ref{lem:varX^*0}.
  Then Theorem~\ref{thm:McLeish} applies and using Lemma~\ref{lem:varX^*0} we complete the proof of Theorem~\ref{thm:clt}.
\end{proof}

The following lemma gives another useful condition on the coefficients $(c_{n,\vec j})_{\vec j\in\Z^d}$ for Theorem~\ref{thm:clt}.

\begin{lemma}\label{lem:trans-sup}
If $(c_{n,\vec j})_{\vec j\in\Z^d}$ is a sequence in $\ell^2(\Z^d)$ that satisfies, for  all $q=1,\dots,d$,
\begin{equation}\label{eq:trans}
\lim_{{n}\to\infty}\frac1{\|c_{n}\|^2}\sum_{{\vec j}\in\Z^d}|{c_{{n},{\vec j}}^2 - c_{{n}, {\vec j}+{\vec e}_q}^2}| = 0,
\end{equation}
where $\vec e_q$ is the $q$-th vector of the canonical basis of $\R^d$,
then \eqref{eq:sup} holds. 
\end{lemma}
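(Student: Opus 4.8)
The plan is to reduce everything to a soft combinatorial estimate after normalizing. Set $a_{n,\vec j} := c_{n,\vec j}^2/\|c_n\|^2$, so that $a_{n,\vec j}\ge 0$, $\sum_{\vec j\in\Z^d} a_{n,\vec j} = 1$, and \eqref{eq:trans} becomes $\delta_{n,q} := \sum_{\vec j\in\Z^d}|a_{n,\vec j} - a_{n,\vec j+\vec e_q}| \to 0$ for each $q=1,\dots,d$; write $\delta_n := \delta_{n,1}+\cdots+\delta_{n,d}\to 0$. Since \eqref{eq:sup} is equivalent to $\sup_{\vec j\in\Z^d}a_{n,\vec j}\to 0$, this is what I would prove.

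The key step is the oscillation estimate: for all $\vec j,\vec k\in\Z^d$ with $\vec j\le\vec k$ one has $|a_{n,\vec j}-a_{n,\vec k}|\le\delta_n$. I would obtain this by telescoping along the monotone lattice path from $\vec j$ to $\vec k$ that raises the coordinates one at a time: with $\vec k-\vec j=(m_1,\dots,m_d)$ and $\vec p_q:=\vec j+(m_1,\dots,m_{q-1},0,\dots,0)$, the triangle inequality gives
\[
|a_{n,\vec j}-a_{n,\vec k}|\le\sum_{q=1}^d\sum_{r=0}^{m_q-1}\bigl|a_{n,\vec p_q+r\vec e_q}-a_{n,\vec p_q+(r+1)\vec e_q}\bigr|.
\]
For fixed $q$ the sites $\vec p_q+r\vec e_q$, $r=0,\dots,m_q-1$, are pairwise distinct, so the inner sum is at most $\delta_{n,q}$; summing over $q$ yields the bound. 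This is the one place that needs care: the monotonicity of the path is exactly what guarantees that the steps taken in a given coordinate direction sit at distinct sites, so their contributions do not exceed the single quantity $\delta_{n,q}$ rather than some path-length multiple of it.

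Finally I would fix an arbitrary $\vec j_0\in\Z^d$ and an integer $N\ge 1$, apply the oscillation estimate to $\vec j_0$ and each $\vec j_0+\vec l$ with $\vec l\in\{0,1,\dots,N-1\}^d$ to get $a_{n,\vec j_0+\vec l}\ge a_{n,\vec j_0}-\delta_n$, and sum these $N^d$ inequalities. Using $\sum_{\vec j\in\Z^d}a_{n,\vec j}=1$ and $a_{n,\vec j}\ge 0$,
\[
1\ge\sum_{\vec l\in\{0,\dots,N-1\}^d}a_{n,\vec j_0+\vec l}\ge N^d\bigl(a_{n,\vec j_0}-\delta_n\bigr),
\]
so $a_{n,\vec j_0}\le N^{-d}+\delta_n$. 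As $\vec j_0$ is arbitrary, $\sup_{\vec j\in\Z^d}a_{n,\vec j}\le N^{-d}+\delta_n$; letting $n\to\infty$ and then $N\to\infty$ gives $\sup_{\vec j\in\Z^d}a_{n,\vec j}\to 0$, which is \eqref{eq:sup}. The whole argument is elementary; the only genuine content is the oscillation estimate, and the main obstacle is simply the bookkeeping in that step.
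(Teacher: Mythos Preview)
Your proof is correct and follows essentially the same strategy as the paper's: telescoping along lattice paths to show that a single large value of $c_{n,\vec j}^2/\|c_n\|^2$ forces many nearby values to be large, which is incompatible with the total mass being $1$. The paper argues by contradiction and bounds the oscillation over a box of side $M$ by $dM\sum_q\delta_{n,q}$ (path length times the sup of a single-step difference, then dominated by the $\ell^1$ sum); your monotone-path observation that the $q$-direction steps occur at distinct sites gives the sharper bound $\delta_n$ independent of the box size, which makes the final counting cleaner but is otherwise the same idea.
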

\begin{proof}
We use an idea of \citep{peligrad97central}.
Assume that \eqref{eq:sup} does not hold. Then, there exist $\varepsilon>0$, a sequence $(n_k)_{k\ge1}$ such that $n_k\to\infty$ as $k\to\infty$, and a sequence $(\vec j_k)_{k\ge1}$ such that
$c_{n_k,\vec j_k}>\varepsilon \|c_{n_k}\|$ for all $k\in\N$.
Choose $M>0$ such that $M^d\varepsilon^2>1$. One has, for all $k\in\N$,
$$
\|c_{n_k}\|^2 \ge \sum_{\vec j \in[0,M-1]^d}c_{n_k,\vec j_k+\vec j}^2   \ge M^d c_{n_k,\vec j_k}^2 - \sum_{\vec j \in[0,M-1]^d}| c_{n_k,\vec j_k+\vec j}^2 -  c_{n_k,\vec j_k}^2|.
$$
Hence,
\begin{equation}\label{eq:contradic}
(M^d\varepsilon^2-1) \|c_{n_k}\|^2\le  \sum_{\vec j \in[0,M-1]^d}| c_{n_k,\vec j_k+\vec j}^2 -  c_{n_k,\vec j_k}^2|.
\end{equation}
But, if $\vec j \in[0,M-1]^d$, then 
$$
|c_{n_k,\vec j_k}^2 -  c_{n_k,\vec j_k+\vec j}^2|\le \sum_{i=1}^{\ell(\vec \lambda)}  |c_{n_k,\vec \lambda_i}^2 -  c_{n_k,\vec \lambda_{i+1}}^2|,
$$
where $\vec \lambda=(\vec \lambda_0, \vec \lambda_1,\ldots, \vec \lambda_{\ell})$ is any path from $\vec \lambda_0=\vec j_k$ to $\vec \lambda_\ell=\vec j_k + \vec j$, with $|\vec \lambda_i-\vec \lambda_{i+1}|_1=1$. Since $\vec j \in[0,M-1]^d$, we can always choose the path $\vec \lambda$ of length $\ell=\ell(\vec \lambda)$ smaller than $dM$. Thus, we get
\begin{align*}
|c_{n_k,\vec j_k}^2 -  c_{n_k,\vec j_k+\vec j}^2|&\le dM \sup_{q=1,\ldots,d} \sup_{\vec k\in\Z^d}|c_{n_k,\vec k}^2 -  c_{n_k,\vec k+\vec e_q}^2|\\
&\le dM \sum_{q=1}^d \sum_{\vec k\in\Z^d}|c_{n_k,\vec k}^2 -  c_{n_k,\vec k+\vec e_q}^2|.
\end{align*}
Together with \eqref{eq:contradic}, this contradicts  \eqref{eq:trans}.
\end{proof}

\begin{remark}
 Using Cauchy-Schwarz inequality, we also see that the condition 
\begin{equation}\label{eq:trans2}
\lim_{{n}\to\infty}\frac1{\|c_{n}\|^2}\sum_{{\vec j}\in\Z^d}({c_{{n},{\vec j}} - c_{{n}, {\vec j}+{\vec e}_q}})^2 = 0, \mbox{ for  all }q=1,\dots,d,
\end{equation}
implies \eqref{eq:trans} and thus by Lemma~\ref{lem:trans-sup}, implies \eqref{eq:sup}. This last observation leads to an improvement in Theorem 3.1 in \citet{bierme14invariance}. The conditions (i) and (ii) of this theorem are equivalent to our conditions \eqref{eq:sup} and \eqref{eq:trans2}, respectively. Thus, the condition (i) in \cite[Theorem 3.1]{bierme14invariance} is unnecessarily.

\end{remark}

\section{An invariance principle}\label{sec:WIP}
 \label{main}

The aim of the section is to establish a general invariance principle for partial sums of the random field $(X_{\vec j})_{\vec j\in\Z^d}$ defined in Section~\ref{sec:model}. Recall that $(X_{\vec j})_{\vec j\in\Z^d}$ are associated to the random graph $\mathcal G_\mu$, with $\mu\in\D(E,\nu)$. We consider partial sums on finite rectangular subsets of $\Z^d$. As we will see, the growth of the rectangles will be determinant in the invariance principle and different limit random fields appear at different regimes.
For the general case, consider a matrix $E'=\mbox{diag}(1/\alpha'_1,\ldots,1/\alpha'_d)$ with $\alpha_i'>0, i=1,\dots,d$ and the partial-sum process
$$
S^{E'}_{n}({\vec t})=\sum_{{\vec j}\in[\vec 0,n^{E'}\vec t-\vec 1]}X_{\vec j},\qquad n\ge 1 \mbox{ and }\vec t=(t_1,\ldots,t_d)\in[0,1]^d.
$$
The result will depend on both $E'$ and $E$. 

We introduce several parameters. First, for all $k=1,\ldots,d$, set $\rho_k:=\alpha_k / \alpha'_k$, and
consider
\equh\label{def:gamma0}
\gamma_0=\gamma_0(E,E'):=\min\ccbb{\gamma\in\{\rho_1,\ldots,\rho_d\}\mmid \sum_{k:\gamma\geq \rho_k}\frac1{\alpha_k}>2, \sum_{k:\gamma>\rho_k}\frac1{\alpha_k}\leq 2}.
\eque
Note that $\gamma_0$ is well defined by the assumption $q(E)>2$, and is completely determined by $E$ and $E'$.
Given $\gamma_0>0$, define the sets
\begin{align*}
I_<&:=\{k\in\{1,\ldots,d\}\mid \gamma_0<\rho_k\},\\ 
I_=&:=\{k\in\{1,\ldots,d\}\mid \gamma_0=\rho_k\},\\
I_>&:=\{k\in\{1,\ldots,d\}\mid \gamma_0>\rho_k\}.
\end{align*}
This gives  a partition of $\{1,\ldots,d\}$. We also write $I_\le:=I_<\cup I_=$ and  $I_\ge:=I_=\cup I_>$.
The sets $I_>$ and $I_<$ consist of the directions in which the limit random field exhibit degenerate dependence structure. Remark that by construction, 
\[
|I_=|\geq 1 \quad\mbox{ and }\quad |I_>|\leq 1.
\] 
According to these subsets of $\{1,\dots,d\}$, we consider subspaces of $\R^d$ given by
\begin{align*}
\H_<&:=\{x\in\R^d\mid x_k=0 \mbox{ for }k\notin I_<\}, 
\end{align*}
and similarly $\H_=,\H_>,\H_\leq,\H_\geq$. Let $\pi_<$, $\pi_=$, $\pi_>$,  $\pi_\le$, and  $\pi_\ge$ denote orthogonal projections to the corresponding subspaces, and let $\lambda_<, \lambda_=, \lambda_>, \lambda_\le$, and $\lambda_\ge$ denote the Lebesgue measures on the corresponding subspaces.
For $\pi$ of any proceeding projection, $\pi E$ is a linear operator on $\Rd$; accordingly there is  a corresponding diagonal matrix, which we also denote by $\pi E$ with a little abuse of notations.

Next, we define another diagonal matrix $E''$ (that only depends on $E$ and $E'$) by:
\begin{equation}\label{def:E''}
 E'':=\mbox{diag}(\gamma_1/\alpha'_1,\ldots,\gamma_d/\alpha'_d), \;\mbox{ with }\gamma_k:=\frac{\gamma_0}{\rho_k}\vee 1,\; k=1,\ldots,d.
\end{equation}
Remark that, by definition of $E''$, one has
$ \pi_\le E''=\pi_\le E'$ and $ \pi_\ge E''= \gamma_0\pi_\ge E$.
Further, $E''-\gamma_0 E$ is strictly positive on $\H_<$.

We can now state our main result.
\begin{theorem}\label{thm:main}
Assume $\mu\in\D(E,\nu)$ with $E = {\rm diag}(1/\alpha_1,\dots,1/\alpha_d)$ with $\alpha_i\in(0,1), i=1,\dots, d$, and $\alpha_1\in(0,1/2)$ if $d=1$. Let $E'={\rm diag}(1/\alpha'_1,\ldots,1/\alpha'_d)$, with $\alpha'_i>0, i=1,\dots,d$, and $\gamma_0$ defined as in \eqref{def:gamma0}. If $q(\pi_> E)< 2$, then 
\[
\left(\frac{S^{E'}_{n}({\vec t})-\E(S^{E'}_{n}({\vec t}))}{n^{\gamma_0+q(E')-q(E'')/2}}\right)_{\vec t \in [0,1]^d}\dconv (W(\vec t))_{\vec t \in [0,1]^d},
\]
as $n\to\infty$, in the Skorohod space $D([0,1]^d)$,
where $(W(\vec t))_{\vec t\in\R^d}$ is a zero-mean Gaussian process with covariances given by
\begin{multline*}
\Cov(W(\vec t),W(\vec s))=\sigma_X^2
\left(\prod_{k\in I_<}\Cov(B_{1/2}(t_k),B_{1/2}(s_k))\right)\\
\times\left( \prod_{k\in I_>}\frac{t_ks_k}{2\pi} \right) \int_{\H_\ge} |\log \psi(\vec y)|^{-2} \prod_{k \in I_=} \frac{(e^{it_k y_k}-1)\overline{(e^{is_k y_k}-1)}}{2\pi|y_k|^2}\, d\lambda_\ge(\vec y),
\end{multline*}
with $B_{1/2}$ a standard Brownian motion on $\R$, $\psi$ is the characteristic function of $\nu$, and $\sigma_X^2$ is given in~\eqref{eq:sigma}.
\end{theorem}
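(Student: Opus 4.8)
The plan is to decompose the partial sum via the linear-random-field representation established in Proposition~\ref{prop:linearProc}, namely $S_n^{E'}(\vec t) - \E S_n^{E'}(\vec t) = \sum_{\vec j\in\Z^d} b_{n^{E'}\vec t, \vec j} X_{\vec j}^*$, and then apply the central limit theorem, Theorem~\ref{thm:clt}, together with a tightness argument. Concretely, I would set $c_{n,\vec j} = c_{n,\vec j}(\vec t) := b_{\lfloor n^{E'}\vec t\rfloor, \vec j}$ and first identify the correct normalization $a_n := n^{\gamma_0+q(E')-q(E'')/2}$ by computing the limit of $\|c_n(\vec t)\|^2 / a_n^2$. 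Since $\|b_{\vec m}\|^2 = \sum_{\vec j} b_{\vec m,\vec j}^2$ and $b_{\vec m,\vec j} = \sum_{\vec k\in[\vec 0,\vec m-\vec 1]} q_{\vec k-\vec j}$, Parseval's identity gives $\|b_{\vec m}\|^2 = (2\pi)^{-d}\int_{[-\pi,\pi]^d} |D_{\vec m}(\vec y)|^2 |Q(\vec y)|^2\, d\vec y$ where $D_{\vec m}$ is the (multivariate) Dirichlet-type kernel $\prod_k \sum_{\ell=0}^{m_k-1} e^{i\ell y_k}$. Using Lemma~\ref{Qhomogene} to replace $|Q(\vec y)|^2$ by $g(\vec y)^2|\log\psi(\vec y)|^{-2}$ with $g$ continuous, $g(\vec 0)=1$, the asymptotics are governed entirely by the behavior near $\vec 0$; substituting $\vec y = n^{-E''}\vec z$ (the natural scaling making the Dirichlet kernels in directions $k\in I_\le$ converge to $(e^{it_k z_k}-1)/(iz_k)$ and those in $I_>$ to $t_k$, while in directions $I_<$ an extra factor degenerates) and invoking the $E$-homogeneity of $\log\psi$ yields the stated covariance kernel after a change of variables. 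The same computation applied to $c_n(\vec t) + c_n(\vec s)$, or directly to the cross term, gives $\Cov$ in the limit, hence the finite-dimensional convergence once Theorem~\ref{thm:clt} is verified.

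The second main step is checking hypothesis~\eqref{eq:sup} of Theorem~\ref{thm:clt} for the coefficients $c_n(\vec t)$ (more precisely for an arbitrary linear combination $\sum_i \lambda_i c_n(\vec t_i)$, which is what the Cramér--Wold device requires). I expect the cleanest route is via Lemma~\ref{lem:trans-sup} and the remark following it: it suffices to show $\|c_n\|^{-2}\sum_{\vec j}(c_{n,\vec j} - c_{n,\vec j+\vec e_q})^2 \to 0$ for each $q$. Now $c_{n,\vec j+\vec e_q} - c_{n,\vec j}$ is again a partial sum of $q$'s over a thin slab, so by Parseval this difference-norm equals $(2\pi)^{-d}\int |e^{iy_q}-1|^2 |D_{\lfloor n^{E'}\vec t\rfloor}(\vec y)|^2|Q(\vec y)|^2 d\vec y$, and the extra factor $|e^{iy_q}-1|^2 \asymp |y_q|^2$ near $\vec 0$ provides precisely the gain needed to make the ratio to $\|c_n\|^2$ vanish, by the same scaling and a dominated-convergence argument using Lemma~\ref{integrabilite} for local integrability of $|\log\psi|^{-2}$ near the origin (here the hypothesis $q(\pi_>E)<2$ and the definition of $\gamma_0$ are what keep all relevant exponents in the integrable range). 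With \eqref{eq:sup} verified, Theorem~\ref{thm:clt} delivers Gaussian limits with the computed variances for every linear combination, hence $\overset{fdd}{\longrightarrow}$.

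The remaining step is tightness in $D([0,1]^d)$. I would use the standard moment criterion for multiparameter càdlàg processes (e.g.\ the $d$-dimensional analogue of the Billingsley/Bickel--Wichura criterion in terms of the increment over a block $[\vec s,\vec t]$): it suffices to bound $\E\big(\Delta_{[\vec s,\vec t]} S_n^{E'}\big)^2$ (the variance of the rectangular increment, which is second-moment-equivalent since the innovations are bounded and orthogonal and the process is a fixed linear functional of them, so all $L^p$ norms are comparable up to the Gaussian-type hypercontractivity available for sums of bounded orthogonal martingale differences) by $C\,\nn{\vec t-\vec s}^{1+\delta}$ uniformly in $n$ after normalization. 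This variance is again computed by Parseval as an integral of $|Q|^2$ against a product of kernels, one factor per coordinate being $|e^{i(t_k-s_k)n^{\beta_k''}y_k}-1|^2$-type; the scaling estimates already developed for the covariance computation, together with the local integrability from Lemma~\ref{integrabilite}, give the required Hölder bound on the limiting covariance, and a uniform-in-$n$ version follows by the same dominated-convergence control. I expect the main obstacle to be precisely this uniform tightness bound: one must control the oscillation integrals uniformly over $n$ and over small rectangles, carefully handling the three types of directions ($I_<$, $I_=$, $I_>$) which scale differently and, in the degenerate directions, require separate elementary estimates (Brownian-type increments in $I_<$, linear-in-$t$ behavior in $I_>$). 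The finite-dimensional convergence, by contrast, should follow relatively smoothly once the Fourier-analytic asymptotics of $\|c_n\|^2$ are pinned down.
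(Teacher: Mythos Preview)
Your outline matches the paper's three-step proof: covariance asymptotics via Parseval and the change of variables $\vec y = n^{-E''}\vec z$ (this is Lemma~\ref{lem:cov}), finite-dimensional convergence via Cram\'er--Wold and Theorem~\ref{thm:clt}, and tightness via a $p$-th moment criterion of Bickel--Wichura type. One tactical difference worth noting: for verifying \eqref{eq:sup} through \eqref{eq:trans2}, you propose the direct Fourier route---inserting the factor $|e^{iy_q}-1|^2$ into the Parseval integral and observing that after the rescaling it stays bounded by $4$ while tending to zero pointwise, so dominated convergence against the same majorant $h$ from \eqref{def:h} kills the ratio. This works and is arguably cleaner than the paper's argument (Lemma~\ref{lem:translation}), which instead writes $b_{n,\vec j}(\vec t)-b_{n,\vec j+\vec e_\ell}(\vec t)$ as a difference of boundary sums and compares it to $\|b_n(t_1,\dots,\varepsilon t_\ell,\dots,t_d)\|^2$ via Lemma~\ref{lem:cov}.

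One correction on tightness: the Bickel--Wichura criterion in $D([0,1]^d)$ requires a \emph{product} bound of the form $\E|\Delta_{[\vec s,\vec t]}S_n^{E'}/a_n|^p\le C\prod_{k}(t_k-s_k)^{\gamma}$ with $\gamma>1$, not merely $C\|\vec t-\vec s\|^{1+\delta}$. The second moment alone cannot deliver an exponent exceeding $1$ per coordinate, so Burkh\"older's inequality for some $p>2$ (your ``hypercontractivity'') is essential, not optional. The paper implements this with the refined Dirichlet-kernel bound $n^{-2}|D_{\lfloor nt-1\rfloor}(n^{-1}y)|^2\le C\,t^{1-\delta}\min(1,|y|^{-1-\delta})$, which yields the required uniform product estimate with exponent $(1-\delta)p/2>1$.
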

This theorem reveals that taking different summing rectangles may lead to different limits, under different normalizations. To the best of our knowledge, such a phenomenon has not been noticed in the literature until very recently \citep{puplinskaite13aggregation,puplinskaite15scaling} for a different model. We elaborate more this phenomenon of scaling transition in Section~\ref{sec:property}. 

\begin{remark}
Observe that one can write $\Cov(W(\vec t),W(\vec s)) = \frac{\sigma_X^2}{(2\pi)^{d}}\wb C(\vec t,\vec s)$ with
\begin{equation}\label{eq:C}
\wb C(\vec t, \vec s):= \left(\prod_{k\in I_>}t_ks_k\right)
\int_{\R^d} \left|\log\psi(\pi_\ge\vec y)\right|^{-2} \left(\prod_{k\in I_\le}\frac{\left(e^{it_k y_k}-1\right)\overline{\left(e^{is_k y_k}-1\right)}}{|y_k|^2}\right)d\vec y,
\end{equation}
because of the identity \citep[Proposition 7.2.8]{samorodnitsky94stable}:
\equh\label{eq:covfBm}
\int_\R\frac{(e^{ity}-1)\wb{(e^{isy}-1)}}{2\pi |y|^{1+2H}}dy = C_{H}\Cov\pp{B_{H}(t),B_{H}(s)}, t,s\in\R, H\in(0,1)
\eque
with
\[
C_H = \frac{\pi}{H\Gamma(2H)\sin(H\pi)}.
\]
\end{remark}

The rest of the section is devoted to the proof of Theorem~\ref{thm:main}. 
Using Proposition~\ref{prop:linearProc}, we get
\equh\label{eq:linear2}
S^{E'}_{n}({\vec t})-\E(S^{E'}_{n}({\vec t}))=\sum_{{\vec j} \in \Z^d}b_{n,\vec j}(\vec t)X_{\vec j}^*,
\eque
with $b_n(\vec t)=\left(b_{n,\vec j}(\vec t)\right)_{\vec j \in \Z^d}\in\ell^2(\Z^d)$ and
\begin{equation}\label{b_nj}
b_{n,\vec j}(\vec t)=\sum_{{\vec k}\in[0,n^{E'}\vec t-\vec 1]}q_{\vec k-\vec j}.
\end{equation}
Recall that $(X_{\vec j}^*)_{\vec j\in\Z^d}$ are stationary martingale differences.

The proof of Theorem \ref{thm:main} is now divided into three steps. The key step is to compute the covariance, which is done in Section~\ref{sec:covariance}. Then, we proceed with the standard argument to show the weak convergence by first establishing finite-dimensional convergence in Section~\ref{sec:fdd} and then the tightness in Section~\ref{sec:tightness}. The matrices $E$ and $E'$, and thus $\gamma_0$ and $E''$, are fixed as in the assumptions of the theorem.

\begin{remark}
As we will see below in the proof, essentially we establish an invariance principle for linear random field $(X_{\vec j})_{\vec j\in\Z^d}$ with 
\[
X_{\vec j} = \sum_{\vec i}q_{\vec i}X_{\vec j}^*, \vec j\in \Z^d,
\]
where $(X_{\vec i}^*)_{\vec i\in\Z^d}$ are stationary martingale-difference innovations, and $(q_{\vec i})_{\vec i\in\Z^d}$ are real Fourier coefficients of certain function $Q(\vec t)$. This is a standard framework to obtain linear random fields in the literature, and we comment briefly on connections between our results and others. 
\begin{itemize}
\item[(i)] First, the same invariance principle should hold if the innovations are replaced by other weakly dependent random fields (weakly dependent in the sense of e.g.~\citep{bierme14invariance,lavancier07invariance,wang14invariance}). These results can be viewed as generalizations of the seminal work of \citet{davydov70invariance} on invariance principles for linear processes. 
\item[(ii)]Second, from the modeling point of view, the specific choices of $Q(t)$ (in terms of $\mu\in\D(E,\nu)$) and hence $(q_{\vec j})_{\vec j\in\Z^d}$ are new. However, although our assumption on $Q(\vec t)$ is very general, not all possible operator-scaling Gaussian random fields can show up in the limit; in particular the Hammond--Sheffield model in high dimensions does not scale to fractional Brownian sheets except for a few cases in terms of Hurst indices shown in Proposition~\ref{prop:fBs}. The aforementioned results \citep{bierme14invariance, lavancier07invariance,wang14invariance} all include linear random-field models scaling to fractional Brownian sheets, for flexible choices of Hurst indices.
\item[(iii)] At last, when the innovation random fields exhibit strong dependence, the limiting object could be more complicated (\citep{lavancier07invariance}). 
\end{itemize}
\end{remark}

\subsection{Covariances}\label{sec:covariance}\

From~\eqref{eq:linear2},  we obtain
for $\vec t, \vec s \in [0,1]^d$,
$$
\Cov(S^{E'}_{n}({\vec t}),S^{E'}_{n}({\vec s}))=  \sigma_X^2\langle b_n(\vec t), b_n(\vec s)\rangle,
$$
where, $\langle b_n(\vec t), b_n(\vec s)\rangle:=\sum_{\vec k\in \Z^d}b_{n,\vec k}(\vec t)b_{n,\vec k}(\vec s)$.
The asymptotic behavior of the covariances are given in the following lemma where
$u_n\underset{n\rightarrow \infty}{\sim}v_n$ stands for $\lim_{n\to\infty}u_n/v_n=1$. 
\begin{lemma}\label{lem:cov}
For all $\vec t,\vec s \in [0,1]^d$,
\[
\sigma_X^2\langle b_n(\vec t), b_n(\vec s)\rangle
\underset{n\rightarrow \infty}{\sim} n^{2\gamma_0+2q(E')-q(E'')}\Cov(W(\vec t),W(\vec s)).
\]
\end{lemma}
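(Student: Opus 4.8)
The plan is to pass to Fourier coefficients and analyze the asymptotics of the inner product $\langle b_n(\vec t), b_n(\vec s)\rangle$ via Parseval's identity. Recall from~\eqref{b_nj} that $b_{n,\vec j}(\vec t) = \sum_{\vec k \in [0, n^{E'}\vec t - \vec 1]} q_{\vec k - \vec j}$, so $b_n(\vec t)$ is the convolution of the sequence $(q_{\vec k})$ with the indicator of the rectangle $[0, n^{E'}\vec t - \vec 1]$. Taking Fourier transforms on $\Z^d$, the Fourier series of $b_n(\vec t)$ is $Q(\vec x)$ multiplied by the Fourier transform of that indicator, namely $\prod_{k=1}^d \sum_{\ell=0}^{\lfloor n^{1/\alpha'_k} t_k\rfloor - 1} e^{i\ell x_k} = \prod_{k=1}^d \frac{e^{i\lfloor n^{1/\alpha'_k} t_k\rfloor x_k} - 1}{e^{ix_k} - 1}$. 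Hence by Parseval,
\[
\langle b_n(\vec t), b_n(\vec s)\rangle = \frac{1}{(2\pi)^d}\int_{[-\pi,\pi]^d} |Q(\vec x)|^2 \prod_{k=1}^d \frac{(e^{i\lfloor n^{1/\alpha'_k} t_k\rfloor x_k} - 1)\overline{(e^{i\lfloor n^{1/\alpha'_k} s_k\rfloor x_k} - 1)}}{|e^{ix_k} - 1|^2}\, d\vec x.
\]

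Next I would perform the change of variables $x_k = n^{-\gamma_k/\alpha'_k} y_k$ for each $k$ (i.e.\ $\vec x = n^{-E''}\vec y$), chosen so that in each direction the oscillating factor stabilizes: when $k \in I_\le$ one has $\gamma_k = 1$ so $\lfloor n^{1/\alpha'_k} t_k\rfloor x_k \to t_k y_k$ as $n\to\infty$, and when $k \in I_>$ one has $\gamma_k = \gamma_0/\rho_k > 1$, so $e^{i\lfloor n^{1/\alpha'_k} t_k\rfloor x_k} - 1 \sim i n^{(\gamma_k - 1)/\alpha'_k} t_k y_k$ linearizes and produces the factor $t_k s_k$. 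The Jacobian contributes $n^{q(E'')}$ (actually $n^{\sum_k \gamma_k/\alpha'_k}$, and one checks $\sum_k \gamma_k/\alpha'_k = q(E'')$). Using Lemma~\ref{Qhomogene}, $|Q(n^{-E''}\vec y)|^2 = g(n^{-E''}\vec y)^2 |\log\psi(n^{-E''}\vec y)|^{-2}$ with $g$ continuous, $g(\vec 0)=1$; and by $E$-homogeneity of $\log\psi$ together with the relations $\pi_\ge E'' = \gamma_0 \pi_\ge E$ and strict positivity of $E'' - \gamma_0 E$ on $\H_<$, one gets $|\log\psi(n^{-E''}\vec y)|^{-2} \sim n^{2\gamma_0} |\log\psi(\pi_\ge \vec y)|^{-2}$ pointwise in $\vec y$ — the contributions along $\H_<$ degenerate (the argument tends to $\vec 0$ there in a way that leaves only $\pi_\ge \vec y$), and this is precisely where the Brownian-motion factors $\prod_{k\in I_<}\Cov(B_{1/2}(t_k),B_{1/2}(s_k))$ come from (the integral $\int \frac{(e^{it y}-1)\overline{(e^{isy}-1)}}{2\pi|y|^2}dy = \min(t,s)$). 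Collecting the powers of $n$: $n^{q(E'')}$ from the Jacobian, $n^{2\gamma_0}$ from $\log\psi$, and $n^{2(q(E') - q(E''))/2} \cdot$(corrections) — more precisely the factors $n^{2(\gamma_k-1)/\alpha'_k}$ for $k\in I_>$ and the normalizations hidden in the $|e^{ix_k}-1|^{-2} \sim n^{2/\alpha'_k}\,|y_k|^{-2}$ replacements for $k\in I_=$ — one verifies the total exponent is $2\gamma_0 + 2q(E') - q(E'')$, matching the claim.

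The final step is to justify exchanging limit and integral, i.e.\ to upgrade the pointwise convergence of the integrand (after rescaling) to convergence of the integrals. This is the main obstacle. One needs a dominating function integrable on all of $\R^d$: near $\vec 0$ the bound $|\log\psi(\pi_\ge\vec y)|^{-2}$ times the product of $|y_k|^{-2}$-type factors must be controlled using Lemma~\ref{integrabilite} (the condition $q(\pi_\ge E) > 2$, which holds by construction of $\gamma_0$, together with $q(\pi_> E) < 2$, ensures local integrability in the right subspace), while for large $|\vec y|$ one uses that the oscillating factors $|e^{it_k y_k} - 1|^2 \le 4$ are bounded and that $|\log\psi|$ grows, plus the fact that the pre-limit integration is over the shrinking box $[-\pi n^{\gamma_k/\alpha'_k}, \pi n^{\gamma_k/\alpha'_k}]$ in each coordinate; a careful splitting of the domain (near the origin vs.\ away, and along $\H_<$ vs.\ $\H_\ge$) combined with dominated convergence in each region will finish it. I expect this tail/singularity analysis — tracking which combinations of $E$, $E'$, $\gamma_0$ guarantee integrability — to be the technically delicate part, though conceptually it is just bookkeeping built on Lemmas~\ref{Qhomogene} and~\ref{integrabilite}.
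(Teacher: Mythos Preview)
Your proposal is correct and follows essentially the same route as the paper: Parseval, the substitution $\vec x = n^{-E''}\vec y$, Lemma~\ref{Qhomogene} combined with $E$-homogeneity of $\log\psi$ to extract the factor $n^{2\gamma_0}|\log\psi(\pi_\ge\vec y)|^{-2}$, the Dirichlet-kernel limits in the two regimes $\gamma_k=1$ and $\gamma_k>1$, and finally dominated convergence via an explicit majorant whose integrability rests on $q(\pi_\ge E)>2$ and $q(\pi_>E)<2$. Two minor arithmetic slips to clean up: the Jacobian of $\vec x=n^{-E''}\vec y$ is $n^{-q(E'')}$ (not $n^{q(E'')}$), and for $k\in I_>$ the linearized exponent is $(1-\gamma_k)/\alpha'_k$ (not $(\gamma_k-1)/\alpha'_k$); neither affects the argument.
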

\begin{proof}

Define
for $m\in\N$ and  $x\in\R$,
\begin{equation}\label{Dirichlet}
D_m(x)=\sum_{l=0}^{m}e^{ilx}=\frac{e^{i(m+1)x}-1}{e^{ix}-1},
\end{equation}
and for $\vec x \in \R^d$, the trigonometric polynomial
$$K_n(\vec t, \vec x)=\sum_{{\vec j}\in\Z^d}\ind_{\vec j\in[0,n^{E'}\vec t-\vec 1]}e^{i\vec j\cdot \vec x}=\prod_{k=1}^dD_{\lfloor n^{1/\alpha'_k}t_k-1\rfloor}(x_k),$$ 
where $\lfloor\cdot\rfloor$ stands for the integer part.
Recall that since
$$Q(\vec x)=\sum_{{\vec j}\in\Z^d}q_{\vec j}e^{i\vec j\cdot \vec x},$$
the sequence $b_{n}(\vec t)$ (defined in \eqref{b_nj}) is obtained by the convolution product of the Fourier coefficients of $K_n(\vec t,\cdot)$
and $\overline{Q}$ with $\overline{Q}(\vec x)=Q(-\vec x)$ since $(q_{\vec j})_{\vec j\in\Z^d}$ is a real sequence. It follows that $b_{n,\vec k}(\vec t)$ is the $\vec k$-th 
Fourier coefficient of $\overline{Q}K_n(\vec t,\cdot)$.
Therefore, using Bessel--Parseval identity, we get
\begin{align}
\langle b_n(\vec t), b_n(\vec s)\rangle&=\frac{1}{(2\pi)^d}\int_{[-\pi,\pi]^d}\overline{Q}(\vec x)K_n(\vec t,\vec x)\overline{\overline{Q}(\vec x)K_n(\vec s,\vec x)}d\vec x\nonumber\\ 
&=\frac{1}{(2\pi)^d}\int_{[-\pi,\pi]^d}\left|Q(\vec x)\right|^2\prod_{k=1}^dD_{\lfloor n^{1/\alpha'_k}t_k-1\rfloor}(x_k)\overline{D_{\lfloor n^{1/\alpha'_k}s_k-1\rfloor}(x_k)}d\vec x\nonumber\\
&=\frac{n^{-q(E'')}}{(2\pi)^d}\int_{n^{E''}[-\pi,\pi]^d} \Phi_n(\vec y,\vec t, \vec s)\,d\vec y,\label{eq:bntbns}
\end{align}
where
\[
 \Phi_n(\vec y,\vec t, \vec s):=\left|Q(n^{-E''}\vec y)\right|^2\prod_{k=1}^dD_{\lfloor n^{1/\alpha'_k}t_k-1\rfloor}(n^{-\gamma_k/\alpha'_k}y_k)\overline{D_{\lfloor n^{1/\alpha'_k}s_k-1\rfloor}(n^{-\gamma_k/\alpha'_k}y_k)}.
\]
According to Lemma~\ref{Qhomogene} and the $E$-homogeneity of $\log\psi$, one has 
\begin{align*}
n^{-2\gamma_0}\left|Q(n^{-E''}\vec y)\right|^2
&=n^{-2\gamma_0}\left|g(n^{-E''}\vec y)\right|^2\left|\log\psi(n^{-\gamma_0E}n^{-(E''-\gamma_0E)}\vec y)\right|^{-2}\\
&=\left|g(n^{-E''}\vec y)\right|^2\left|\log\psi(n^{-(E''-\gamma_0E)}\vec y)\right|^{-2}.
\end{align*}
Thus,
$$
\limn n^{-2\gamma_0}\left|Q(n^{-E''}\vec y)\right|^2 =\left|\log\psi(\pi_\ge\vec y)\right|^{-2},
$$
because $E''-\gamma_0E$ is null on $\H_\ge$ and strictly positive on $\H_<$ and $g(\vec 0)=1$. Further, for all  $n\in\N_*$, $\vec y \in n^{E''}[-\pi,\pi]^d$,
\begin{equation}\label{eq:boundQ}
n^{-2\gamma_0}\left|Q(n^{-E''}\vec y)\right|^2\le \underset{\vec x\in[-\pi,\pi]^d}{\max}|g(\vec x)|^2\sup_{\vec z\in\H_<}|\log\psi(\vec z+\pi_\ge\vec y)|^{-2}.
\end{equation}
Now, remark that for all $t\in[0,1]$ and $y\in\R$,
$$
\lim_{n\to \infty} n^{-1} D_{\lfloor nt-1\rfloor}(n^{-\gamma}y)=\left\{
\begin{array}{ll}
\displaystyle\frac{e^{ity}-1}{iy}&\mbox{ if }\gamma=1\\
\displaystyle t&\mbox{ if }\gamma>1
\end{array}\right.,
$$
and if $|n^{-\gamma}y|\le \pi$, then
$$
\left|n^{-1}D_{\lfloor nt-1\rfloor}(n^{-\gamma}y)\right|
=\left|\frac{\sin\left(\lfloor nt\rfloor n^{-\gamma}y/2\right)}{n\sin\left(n^{-\gamma}y/2\right)} \right|\le\left\{
\begin{array}{ll}
\displaystyle \pi\min\left\{1,\frac{1}{|y|}\right\}&\mbox{ if }\gamma=1\\
\displaystyle \frac{\pi}{2} &\mbox{ if }\gamma>1                                                                                    \end{array}\right.,
$$
where we have used that $\frac{2}{\pi}|x|\le|\sin(x)|\le |x|\wedge1$ for $x\in[-\pi/2,\pi/2]$ and that $|t|\le 1$. Since $\gamma_k>1$ if and only if $k\in I_>$, we infer
\begin{equation}\label{eq:phi_n}
\Phi_n(\vec y,\vec t, \vec s)\sim n^{2\gamma_0+2q(E')} 
\left|\log\psi(\pi_\ge\vec y)\right|^{-2} \left(\prod_{k\in I_>}t_ks_k\right)\left(\prod_{k\in I_\le}\frac{\left(e^{it_k y_k}-1\right)\overline{\left(e^{is_k y_k}-1\right)}}{|y_k|^2}\right)
\end{equation}
as $n\to\infty$ and for all $\vec t$, $\vec s\in [0,1]^d$, 
\equh\label{eq:DCT2}
n^{-2\gamma_0-2q(E')} |\Phi_n(\vec y,\vec t, \vec s)|\leq \pi^{2d}\max_{\vec x\in[-\pi,\pi]^d}|g(\vec x)|^2 \, h(\vv y),
\eque 
with
\begin{equation}\label{def:h}
h(\vec y):=\sup_{\vec x\in\H_<}|\log\psi(\vec x+\pi_\ge\vec y)|^{-2}\,\prod_{k\in I_\le}\min\left\{1,\frac{1}{|y_k|^2}\right\}.
\end{equation}
Applying the dominated convergence theorem to~\eqref{eq:bntbns},~\eqref{eq:phi_n} and~\eqref{eq:DCT2} and using \eqref{eq:C}, to show the desired result it remains to prove that $h$ is integrable on $\Rd$.

Formally, write
\begin{multline*}
\int_{\R^d}h(\vec y)d\vec y = \int_{\H_\geq}\int_{\H_<}h(\vec y)\,d\lambda_<\otimes\lambda_\geq(\vec y)\\
= \int_{\H_<}\prod_{k\in I_<}\min\ccbb{1,\frac1{|y_k|^2}}\,d\lambda_<(\vec y)\int_{\H_\geq}\sup_{\vec x\in\H_<}|\log\psi(\vec x+\vec y)|^{-2}\prod_{k\in I_=}\min\ccbb{1,\frac1{|y_k|^2}}\,d\lambda_\geq(\vec y).
\end{multline*}
By Fubini's theorem, $h$ is integrable over $\R^d$ if 
\begin{equation}\label{int1}
\int_{\H_<}\prod_{k\in I_<}\min\left\{1,\frac{1}{|y_k|^2}\right\}\, d\lambda_<(\vec y) < \infty
\end{equation}
and
\begin{equation}\label{int2}
\int_{\H_\ge}h(\vec y)  \, d\lambda_\ge(\vec y)=\int_{\H_\ge} \sup_{\vec x\in\H_<}|\log\psi(\vec x+\vec y)|^{-2}\prod_{k\in I_=}\min\left\{1,\frac{1}{|y_k|^2}\right\}\, d\lambda_\ge(\vec y) < \infty.
\end{equation}
The integrability condition \eqref{int1} is obvious. For \eqref{int2}, let us remark that the function
$\vec y\in\H_\ge\mapsto\inf_{\vec x\in\H_<}|\log\psi(\vec x+\vec y)|$ is $(\pi_\ge E)$-homogeneous and since $q(\pi_\ge E)>2$, by Lemma~\ref{integrabilite}, the function  $\vec y\in\H_\ge\mapsto\sup_{\vec x\in\H_<}|\log\psi(\vec x+\vec y)|^{-2}$ is locally integrable on $\H_\ge$ with respect to $\lambda_\ge$.
Together with the fact that $\sup_{\vec x\in\H_<}|\log\psi(\vec x+\vec y)|^{-2}$ is bounded by $1$ for $\pi_=\vec y$ large enough, this shows that
$$
\int_{\H_\ge}\indd{\|\pi_>\vec y\|_{_{\pi_{>}E}}\le 1} h(\vec y)\, d\lambda_\ge(\vec y) < \infty,
$$
with the definition of  $\|\cdot\|_{_{\pi_>E}}$ given in \eqref{def:Enorm}. Moreover, 
\begin{align*}
&\hspace{-20pt}\int_{\H_\ge}\indd{\|\pi_>\vec y\|_{\pi_{>}E}> 1} h(\vec y)\, d\lambda_\ge(\vec y)\\
&\le
\int_{\H_>} \indd{\|\vec y\|_{_{\pi_{>}E}}> 1}\sup_{\vec x\in\H_\le}|\log\psi(\vec x+\vec y)|^{-2}\,d\lambda_>(\vec y)
\,\int_{\H_=}\prod_{k\in I_=}\min\left\{1,\frac{1}{|y_k|^2}\right\}\,d\lambda_=(\vec y).
\end{align*}
The second integral is clearly finite. For the first one,
since $\vec y\in\H_>\mapsto\inf_{\vec x\in\H_\le}|\log\psi(\vec x+\vec y)|$ is $(\pi_> E)$-homogeneous and  $q(\pi_>E)<2$, one has
\begin{align*}
&\hspace{-20pt}\int_{\H_>}\indd{\|\vec y\|_{_{\pi_{>}E}}> 1} \sup_{\vec x\in\H_\le}|\log\psi(\vec x+\vec y)|^{-2}\,d\lambda_>(\vec y)\\
&=\int_1^{+\infty}r^{q(\pi_>E)-3}\int_{S_{\pi_>E}}\sup_{\vec x\in\H_\le}|\log\psi(\vec x+\vec \theta)|^{-2}\, d\sigma_{\pi_>E}(\vec \theta)<\infty,
\end{align*}
where $S_{\pi_>E}$ is the unit sphere of $\H_>$ with respect to $\|\cdot\|_{\pi_>E}$ and $\sigma_{\pi_>E}$ is the Radon measure on $S_{\pi_>E}$ such that $d\lambda_>=r^{q(\pi_>E)-1}drd\sigma_{\pi_>E}$.
This shows that \eqref{int2} holds and thus the function $h$ in \eqref{def:h} is integrable over $\R^d$.
\end{proof}

\subsection{Finite-dimensional convergence}\label{sec:fdd}\

We start by showing that the coefficients $b_{n,\vec j}(\vec t)$ defined in \eqref{b_nj} satisfy the condition \eqref{eq:sup} of Theorem~\ref{thm:clt} in the following lemma.
\begin{lemma}\label{lem:translation}
For all $\vec t \in (0,1]^d$ and all $q=1,\dots,d$, 
\begin{equation*}
\lim_{{n}\to\infty}\frac1{\|b_{n}(\vec t)\|^2}\sum_{{\vec j}\in\Z^d}|{b_{{n},{\vec j}}^2(\vec t) - b_{{n}, {\vec j}+{\vec e}_q}^2(\vec t)}| = 0
\end{equation*}
and \eqref{eq:sup} holds.
\end{lemma}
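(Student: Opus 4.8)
The plan is to verify condition \eqref{eq:trans} of Lemma~\ref{lem:trans-sup} for the coefficients $b_{n,\vec j}(\vec t)$, since by that lemma \eqref{eq:trans} implies \eqref{eq:sup}. First I would express $\|b_n(\vec t)\|^2 = \langle b_n(\vec t),b_n(\vec t)\rangle$ and invoke Lemma~\ref{lem:cov} (the case $\vec s = \vec t$) to get the exact asymptotic order $\|b_n(\vec t)\|^2 \sim \sigma_X^{-2} n^{2\gamma_0 + 2q(E') - q(E'')}\Cov(W(\vec t),W(\vec t))$, with the limiting constant strictly positive for $\vec t\in(0,1]^d$. So the denominator is under control, and everything reduces to bounding the numerator $\sum_{\vec j}|b_{n,\vec j}^2(\vec t) - b_{n,\vec j+\vec e_q}^2(\vec t)|$ by something of strictly smaller order than $n^{2\gamma_0 + 2q(E') - q(E'')}$.

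For the numerator I would factor $b_{n,\vec j}^2 - b_{n,\vec j+\vec e_q}^2 = (b_{n,\vec j} - b_{n,\vec j+\vec e_q})(b_{n,\vec j} + b_{n,\vec j+\vec e_q})$ and apply Cauchy--Schwarz, so that
\[
\sum_{\vec j}|b_{n,\vec j}^2(\vec t) - b_{n,\vec j+\vec e_q}^2(\vec t)| \le \Big(\sum_{\vec j}(b_{n,\vec j}(\vec t) - b_{n,\vec j+\vec e_q}(\vec t))^2\Big)^{1/2}\Big(\sum_{\vec j}(b_{n,\vec j}(\vec t) + b_{n,\vec j+\vec e_q}(\vec t))^2\Big)^{1/2}.
\]
The second factor is $O(\|b_n(\vec t)\|)$. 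For the first factor, I would pass to Fourier side exactly as in the proof of Lemma~\ref{lem:cov}: $b_{n,\vec j}(\vec t)$ is the $\vec j$-th Fourier coefficient of $\overline Q(\vec x) K_n(\vec t,\vec x)$, so $b_{n,\vec j}(\vec t) - b_{n,\vec j+\vec e_q}(\vec t)$ has Fourier symbol $(1 - e^{-ix_q})\overline Q(\vec x)K_n(\vec t,\vec x)$, and Bessel--Parseval gives
\[
\sum_{\vec j}(b_{n,\vec j}(\vec t) - b_{n,\vec j+\vec e_q}(\vec t))^2 = \frac{1}{(2\pi)^d}\int_{[-\pi,\pi]^d}|1-e^{-ix_q}|^2\,|Q(\vec x)|^2\,|K_n(\vec t,\vec x)|^2\,d\vec x.
\]
Then I would run the same change of variables $\vec x = n^{-E''}\vec y$ used in Lemma~\ref{lem:cov}, with the extra factor $|1-e^{-ix_q}|^2 = |1 - e^{-in^{-\gamma_q/\alpha'_q}y_q}|^2 \le (n^{-\gamma_q/\alpha'_q}|y_q|)^2$. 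This gains an extra power $n^{-2\gamma_q/\alpha'_q}$ inside the integrand relative to the $\langle b_n(\vec t),b_n(\vec t)\rangle$ computation, but the price is that the factor $|y_q|^2$ may worsen the integrability of the dominating function $h$ of \eqref{def:h} near infinity in the $y_q$ direction (for $q\in I_<$ or $q\in I_=$, where $h$ decays like $|y_q|^{-2}$, the new integrand no longer decays in $y_q$).

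The main obstacle, therefore, is to show the resulting integral — call it $J_n^{(q)}$ — is of strictly smaller order than $n^{2\gamma_0 + 2q(E')}$, despite the loss of integrability. I would handle this by splitting the $y_q$-integral over $|y_q|\le n^{\gamma_q/\alpha'_q}$ (coming from the genuine range $|x_q|\le\pi$) and estimating crudely there: the extra $|y_q|^2$ factor integrated against the rest contributes at most a polynomial factor $n^{c}$ with $c < 2\gamma_q/\alpha'_q$ when $q\in I_{\le}$, or a bounded contribution when $q\in I_>$ (there $D_{\lfloor n^{1/\alpha'_q}t_q-1\rfloor}$ stays of order $n$ and $\gamma_q>1$ gives room to spare). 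In every case the net power of $n$ is strictly below $2\gamma_0 + 2q(E')$, so dividing by $\|b_n(\vec t)\|^2 \asymp n^{2\gamma_0 + 2q(E') - q(E'')}$ and using $J_n^{(q)}$ carries the bookkeeping factor $n^{-q(E'')}$ as well, the whole ratio tends to $0$. Once \eqref{eq:trans} is established, Lemma~\ref{lem:trans-sup} yields \eqref{eq:sup}, completing the proof; I expect the only delicate point is choosing the splitting of the $y_q$-range carefully enough that the polynomial exponent bookkeeping stays strictly on the right side, uniformly over $\vec t$ in compact subsets of $(0,1]^d$.
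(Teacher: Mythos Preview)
Your approach via Fourier analysis is correct and genuinely different from the paper's. Both proofs begin with the same Cauchy--Schwarz reduction to
\[
\sum_{\vec j}(b_{n,\vec j}(\vec t)-b_{n,\vec j+\vec e_q}(\vec t))^2 = o(\|b_n(\vec t)\|^2),
\]
but then diverge. The paper does \emph{not} go back to Fourier: it observes that the telescoping difference $b_{n,\vec j}-b_{n,\vec j+\vec e_q}$ equals the difference of two boundary-face sums, bounds the $\ell^2$-norm of each face sum by $\|b_n(t_1,\ldots,\varepsilon t_q,\ldots,t_d)\|^2$ using the positivity $q_{\vec k}\ge 0$ (a single-slice sum is dominated by a thin-slab sum), and then invokes Lemma~\ref{lem:cov} as a black box to see that this ratio tends to $V(t_1,\ldots,\varepsilon t_q,\ldots)/V(\vec t)\to 0$ as $\varepsilon\to 0$. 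This is slicker because it recycles Lemma~\ref{lem:cov} directly, but it relies on $q_{\vec k}\ge 0$.

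Your Fourier route avoids that positivity assumption and would generalize to other linear fields. However, you have made it harder than necessary. You do not need to bound $|1-e^{-ix_q}|^2$ by $x_q^2$ and then fight the resulting loss of integrability via range-splitting and exponent bookkeeping. After the change of variables $\vec x = n^{-E''}\vec y$ you have
\[
n^{-2\gamma_0-2q(E')+q(E'')}\sum_{\vec j}(b_{n,\vec j}-b_{n,\vec j+\vec e_q})^2
= \frac{1}{(2\pi)^d}\int_{n^{E''}[-\pi,\pi]^d}\bigl|1-e^{-in^{-\gamma_q/\alpha'_q}y_q}\bigr|^2\, n^{-2\gamma_0-2q(E')}\Phi_n(\vec y,\vec t,\vec t)\,d\vec y,
\]
and the trivial bound $|1-e^{-iu}|^2\le 4$ together with \eqref{eq:DCT2} gives an integrable majorant $4\pi^{2d}\max|g|^2\,h(\vec y)$, already known to be in $L^1(\R^d)$ from the proof of Lemma~\ref{lem:cov}. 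Since $\gamma_q/\alpha'_q>0$, the extra factor tends to $0$ pointwise, so dominated convergence yields $o(1)$ immediately. The splitting you anticipated is never needed.
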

\begin{proof}
Fix  $\ell\in\{1,\dots,d\}$ and $t\in(0,1]^d$ be fixed. Using Cauchy--Schwarz inequality,
\begin{align*}
\sum_{{\vec j}\in\Z^d}|{b_{{n},{\vec j}}^2(\vec t) - b_{{n}, {\vec j}+{\vec e}_\ell}^2(\vec t)}|
\le \left(\sum_{{\vec j}\in\Z^d}(b_{{n},{\vec j}}(\vec t) - b_{{n}, {\vec j}+{\vec e}_\ell}(\vec t))^2\right)^{\frac12} 2\|b_n(\vec t)\|.
\end{align*}
So, it is enough to show that
$$
\sum_{{\vec j}\in\Z^d}(b_{{n},{\vec j}}(\vec t) - b_{{n}, {\vec j}+{\vec e}_\ell}(\vec t))^2=o(\|b_n(\vec t)\|^2).
$$
But, we have
\begin{align*}
b_{{n},{\vec j}}(\vec t) - b_{{n}, {\vec j}+{\vec e}_\ell}(\vec t)= \sum_{\substack{{\vec k}\in[{\vec 0},n^{E'}{\vec t}-{\vec 1}]\\ \text{with } k_\ell=\lfloor n^{1/\alpha'_\ell}t_\ell\rfloor-1}}q_{{\vec k}-{\vec j}}\,-\,\sum_{\substack{{\vec k}\in[{\vec 0},n^{E'}{\vec t}-\vec 1]\\ \text{with } k_\ell=0}}q_{{\vec k}-{\vec j-\vec e_\ell}}.
\end{align*}
Thus, 
\begin{align*}
\sum_{{\vec j}\in\Z^d}(b_{{n},{\vec j}}(\vec t)  - b_{{n}, {\vec j}+{\vec e}_\ell}(\vec t) )^2
&\le 2\sum_{{\vec j}\in\Z^d}\left(\sum_{\substack{{\vec k}\in[{\vec 0},n^{E'}{\vec t}-{\vec 1}]\\ \text{with } k_\ell=0}}q_{{\vec k}-{\vec j}}\right)^2.
\end{align*}
Let $\varepsilon >0$. Using Lemma~\ref{lem:cov}, we get
\begin{align*}
\limsup_{n\to\infty}\frac1{\|b_n(\vec t)\|^2}\sum_{{\vec j}\in\Z^d}\left(\sum_{\substack{{\vec k}\in[{\vec 0},n^{E'}{\vec t}-{\vec 1}]\\ \text{with } k_\ell=0}}  q_{{\vec k}-{\vec j}} \right)^2
&\le \limsup_{n\to\infty}\frac1{\|b_n(\vec t)\|^2}\sum_{{\vec j}\in\Z^d}\left(\sum_{\substack{{\vec k}\in[{\vec 0},n^{E'}{\vec t}-{\vec 1}]\\ \text{with } k_\ell\le \varepsilon n^{1/\alpha'_\ell}t_\ell-1}}  q_{{\vec k}-{\vec j}} \right)^2\\
&=\limsup_{n\to\infty}\frac{\|b_n(t_1,\ldots,t_{\ell-1},\varepsilon t_\ell,t_{\ell+1},\ldots,t_d)\|^2}{\|b_n(\vec t)\|^2}\\
&=\frac{V(t_1,\ldots,t_{\ell-1},\varepsilon t_\ell,t_{\ell+1},\ldots,t_d)}{V(\vec t)},
\end{align*}
where $V(\vec t):=\wb C(\vec t, \vec t)$ with the covariance function $\wb C(\cdot,\cdot)$ defined in~\eqref{eq:C}.
We conclude the proof of the lemma using that, for any $\vec t \in(0,1]^d$,
$$V(t_1,\ldots,t_{\ell-1},\varepsilon t_\ell,t_{\ell+1},\ldots,t_d)\to0, \mbox{ as }\varepsilon\to 0.$$
The fact that \eqref{eq:sup} holds is a consequence of Lemma~\ref{lem:trans-sup}.
\end{proof}

To prove the finite-dimensional convergence, we use the Cram\`er-Wold device.
Let $m\in\N$, $\vec t_1,\ldots,\vec t_m\in[0,1]^d$, $\lambda_1,\ldots,\lambda_m\in\R$, and consider 
$S_n^{(m)}=\sum_{k=1}^m \lambda_k S^{E'}_{n}(\vec t_k)$. One has
$$
S_n^{(m)}-\E(S_n^{(m)})=\sum_{j\in\Z^d} d_{n,\vec j} X_{\vec j}^*,
$$
where $d_{n,\vec j}:=\sum_{k=1}^m \lambda_k b_{n,\vec j}(\vec t_k)$ and $\Var(S_n^{(m)})=\|d_n\|^2\Var(X_{\vec 0}^*)$. Using Lemma~\ref{lem:cov}, we get
$$
 \|d_n\|^2
=\sum_{k=1}^m\sum_{\ell=1}^m \lambda_k\lambda_\ell \langle b_{n}(\vec t_k),b_{n}(\vec t_\ell) \rangle
\underset{n\rightarrow \infty}{\sim} \frac{n^{2\gamma_0+2q(E')-q(E'')}}{(2\pi)^d} \sum_{k=1}^m\sum_{\ell=1}^m \lambda_k\lambda_\ell \wb C(\vec t_k,\vec t_\ell),
$$
where $\overline C$ is defined in \eqref{eq:C}.

If $\sum_{k=1}^m\sum_{\ell=1}^m \lambda_k\lambda_\ell \wb C(\vec t_k,\vec t_\ell)=0$, then $\frac{1}{n^{\gamma_0+q(E')-q(E'')/2}}(S_n^{(m)}-\E(S_n^{(m)}))$ converges to $0$ in $L^2$.
If $\sum_{k=1}^m\sum_{\ell=1}^m \lambda_k\lambda_\ell \wb C(\vec t_k,\vec t_\ell)>0$, we get that for each $k=1,\ldots,m$, 
$$
\|b_n(\vec t_k)\|^2\underset{n\rightarrow \infty}{\sim} \|d_n\|^2\frac{\wb C(\vec t_k,\vec t_k)}{\sum_{k=1}^m\sum_{\ell=1}^m \lambda_k\lambda_\ell \wb C(\vec t_k,\vec t_\ell)}.
$$
Thus, since the $b_{n,\vec j}(\vec t_k)$ satisfy \eqref{eq:sup},
$$
\sup_{\vec j} |d_{n,\vec j}|\le \sum_{k=1}^m \lambda_k \sup_{\vec j} |b_{n,\vec j}(\vec t_k)|= \sum_{k=1}^m \lambda_k \, o(\|b_{n}(\vec t_k)\|)  = o(\|d_n\|).
$$
This proves that \eqref{eq:sup} also holds for the $d_{n,\vec j}$ and Theorem~\ref{thm:clt} applies to $S_n^{(m)}$. 
We thus proved the finite-dimensional convergence.

\subsection{Tightness}\label{sec:tightness}\

To prove the tightness, by \citet{bickel71convergence}, following \citep{wang14invariance} and \citep{lavancier05processus_TR}, it is enough to show that for some $p>0$ there exist $\gamma>1$ and $C>0$ such that
for all $\vec t=(t_1,\ldots,t_d)\in[0,1]^d$,
$$
\E\left|\frac{S^{E'}_{n}({\vec t})-\E(S^{E'}_{n}({\vec t}))}{n^{\gamma_0+q(E')-\frac{q(E'')}{2}}}\right|^p \le C \prod_{j=1}^d t_j^{\gamma}.
$$

\medskip

Recall from the equation~\eqref{eq:bntbns} that for all $\vec t\in[0,1]^d$, we have
\begin{align*}
\|b_n(\vec t)\|^2
& =  \frac{n^{-q(E'')}}{(2\pi)^d}\int_{n^{E''}[-\pi,\pi]^d} \left|Q(n^{-E''}\vec y)\right|^2\left(\prod_{k=1}^d\left|D_{\lfloor n^{1/\alpha'_k}t_k-1\rfloor}(n^{-\gamma_k/\alpha'_k}y_k)\right|^2\right)\,d\vec y.
\end{align*}
For any $\delta\in(0,1)$, observe that $|\sin^2(x)|=|\sin^{1-\delta}(x)||\sin^{1+\delta}(x)|\le \min\{|x|^{1-\delta},|x|^2\}$ for all $x$, and $|\sin(x)|\ge \frac{2}{\pi}|x|$ for $x\in[-\pi/2,\pi/2]$.
Then, for all $n$ and $y$ such that $|n y|\le \pi$ and all $t\in[0,1]$, one has 
\begin{multline*}
n^{-2}|D_{\lfloor nt-1\rfloor}(n^{-1}y)|^{2}
=\frac{\sin^2\left(\lfloor nt\rfloor\frac{y}{2n}\right)}{n^2\sin^2\left(\frac{y}{2n}\right)}
\\
\le \min\left\{\frac{\pi^2}{2^{1-\delta}}\frac{t^{1-\delta}}{|y|^{1+\delta}},\frac{\pi^2}{4}t^2\right\}
\le \frac{\pi^2}{2^{1-\delta}} t^{1-\delta}\min\left\{\frac{1}{|y|^{1+\delta}},1\right\},
\end{multline*}
and thus,
\begin{equation*}
 n^{-2}|D_{\lfloor nt-1\rfloor}(n^{-\gamma}y)|^{2}\le\left\{
\begin{array}{ll}
\displaystyle\frac{\pi^2}{2^{1-\delta}} t^{1-\delta}\min\left\{\frac{1}{|y|^{1+\delta}},1\right\}&\mbox{ if }\gamma=1\\
\displaystyle \frac{\pi^2}{4}t^2 &\mbox{ if }\gamma>1                                                                                    \end{array}\right..
\end{equation*}
Recalling that $\gamma_k/\alpha'_k>1$ if and only if $k\in I_>$, 
together with \eqref{eq:boundQ}, this shows that there exists a constant $C>0$ such that
\begin{align*}
&\hspace{-20pt}n^{-2\gamma_0-2q(E')+q(E'')} \|b_n(\vec t)\|^2\\
&\le C\left(\prod_{j\in I_>}t_j^2 \right)\left(\prod_{j\in I_\le} t_j^{1-\delta}\right) \int_{\R^d}\sup_{\vec x\in \H_<}|\log\psi(\vec x+\pi_\ge\vec y)|^{-2}\prod_{j\in I_\le}\min\left\{\frac{1}{|y_j|^{1+\delta}},1\right\}d{\vec y}.
\end{align*}
One can show that this last integral is finite by proceeding exactly as we did to show the integrability of the function $h$ in \eqref{def:h}. The important point is that $1+\delta>1$ to guarantee the integrability of $\frac{1}{|y|^{1+\delta}}$ at infinity.
Hence, for a new constant $C'>0$,
\begin{equation*}
n^{-2\gamma_0-2q(E')+q(E'')} \|b_n(\vec t)\|^2
\le C'\left(\prod_{j\in I_>}t_j^2 \right)\left(\prod_{j\in I_\le} t_j^{1-\delta}\right)\le C' \prod_{j=1}^d t_j^{1-\delta} .
\end{equation*}
Let $p>2$. Using Burkh\"older inequality and the preceding inequality,  there exists a constant $c_p>0$ such that
\begin{align*}
 \E\left|\frac{S^{E'}_{n}({\vec t})-\E(S^{E'}_{n}({\vec t}))}{n^{\gamma_0+q(E')-q(E'')/2}}\right|^p 
&\le c_p \E\left( \sum_{\vec j\in\Z^d} \frac{b_{n,\vec j}^2(\vec t)}{n^{2\gamma_0+2q(E')-q(E'')}} X_{\vec j}^{*2}\right)^{\frac p2}\\
&\le c_p \left( \frac{\|b_n(\vec t)\|^2 }{n^{2\gamma_0+2q(E')-q(E'')}} \right)^{\frac p2}\\
&\le c_pC'^{p/2} \prod_{j=1}^d t_j^{(1-\delta)p/2},
\end{align*}
which gives the tightness by choosing $\delta>1-\frac2p$.

\section{Properties of the limit  field}\label{sec:property}

In this section we focus on the zero-mean Gaussian random field $(W(\vec t))_{\vec t \in \R^d}$ arising in the limit in Theorem~\ref{thm:main}. Recall that this random field depends on both $E$ and $E'$.

\subsection{Increments}\

We may consider a harmonizable representation of $W$, defined on the whole space $\R^d$ by 
$$W(\vec t)=\sigma_X\left(\prod_{k\in I_>}t_k\right)\int_{\R^d}\pp{\prod_{k\in I_{\le}}\frac{e^{it_{k} y_k}-1}{iy_k}} |\log\psi(\pi_{\ge}\vec y)|^{-1}\wt{\mathcal M}(d\vec y), \forall \vec t\in \R^d,$$
with $\sigma_X$ given in~\eqref{eq:sigma}, and $\wt{\mathcal M}$ is a centered complex-valued Gaussian measure on $\R^d$ with Lebesgue control measure (see \citep{xiao09sample}). 
The harmonizable representation shows that the random field has stationary rectangular increments. In the sequel we let $(\vec e_1,\ldots,\vec e_d)$  denote the canonical basis of $\R^d$.
Rectangular increments of $W$   are   defined for $\vec s<\vec t$ by
\begin{eqnarray*}
W([\vec s, \vec t])&=&\sum_{\varepsilon\in \{0,1\}^d}(-1)^{d+|\varepsilon|_1}
W(s_1+\varepsilon_1(t_1-s_1),\ldots, s_d+\varepsilon_d(t_d-s_d))\\
&=&\Delta_{t_1-s_1}^{(1)}\Delta_{t_2-s_2}^{(2)}\ldots\Delta_{t_d-s_d}^{(d)}W(\vec s),
\end{eqnarray*}
where $|\varepsilon|_1=\varepsilon_1+\ldots+\varepsilon_d$ and $\Delta_{\delta}^{(j)}$ corresponds to the directional increment of step  $\delta\in\R$ in direction $j$ for $1\le j\le d$, defined by
$$\Delta_\delta^{(j)}W(\vec t)=W(\vec t+\delta \vec e_j)-W(\vec t).$$

A  direct consequence of Theorem \ref{thm:main} are the following   properties of the random field $W$. 
\begin{proposition}\label{prop:increments0} The random field $W$ satisfies the following properties:
\begin{enumerate}[(i)]
\item\label{label:increments}  stationary rectangular increments: for any fixed $\vec s\in\R^d$,
$$(W([\vec s,\vec t]))_{\vec s<\vec t}\stackrel{fdd}{=} (W([\vec 0,\vec t-\vec s]))_{\vec s<\vec t}\equiv(W( \vec t-\vec s))_{\vec s<\vec t};$$
\item $(E',H)$-operator-scaling property: for all $\lambda>0$
$$(W(\lambda^{E'} \vec t))_{\vec t\in \R^d}\overset{fdd}{=}(\lambda^HW(\vec t))_{\vec t\in \R^d},$$
with $H=\gamma_0+q(E')-\frac{q(E'')}{2}$. 
\end{enumerate}
\end{proposition}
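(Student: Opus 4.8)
The plan is to read off both properties directly from the harmonizable representation
\[
W(\vec t)=\sigma_X\left(\prod_{k\in I_>}t_k\right)\int_{\R^d}\left(\prod_{k\in I_{\le}}\frac{e^{it_{k} y_k}-1}{iy_k}\right) |\log\psi(\pi_{\ge}\vec y)|^{-1}\wt{\mathcal M}(d\vec y),
\]
exploiting the fact that a real Gaussian field given by a Wiener integral against a complex Gaussian random measure $\wt{\mathcal M}$ with Lebesgue control measure is determined, in finite-dimensional distributions, by the inner products of its kernels in $L^2(\R^d)$. So for each property the only thing to verify is an identity (resp.\ equality) between the relevant kernels after the appropriate change of variables in the integral, which then transfers to equality of covariances and hence equality in finite-dimensional distributions since everything is Gaussian with mean zero.

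For property \itref{label:increments}, I would first record that the kernel of the rectangular increment $W([\vec s,\vec t])$ is obtained by applying the alternating-sum operator $\sum_{\varepsilon\in\{0,1\}^d}(-1)^{d+|\varepsilon|_1}(\cdot)$ to the kernel of $W(\vec s+\varepsilon\odot(\vec t-\vec s))$. In the coordinates $k\in I_\le$ this operator acts on $\prod_{k\in I_\le}\frac{e^{i(s_k+\varepsilon_k(t_k-s_k))y_k}-1}{iy_k}$ and, using the telescoping identity $\frac{e^{it_k y_k}-1}{iy_k}-\frac{e^{is_k y_k}-1}{iy_k}=e^{is_k y_k}\frac{e^{i(t_k-s_k)y_k}-1}{iy_k}$ in each such coordinate, it collapses to $\left(\prod_{k\in I_\le}e^{is_k y_k}\right)\prod_{k\in I_\le}\frac{e^{i(t_k-s_k)y_k}-1}{iy_k}$. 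In the coordinates $k\in I_>$ the prefactor $\prod_{k\in I_>}(\cdot)$ is multilinear in the arguments, so the same alternating sum turns $\prod_{k\in I_>}(s_k+\varepsilon_k(t_k-s_k))$ into $\prod_{k\in I_>}(t_k-s_k)$. Thus the kernel of $W([\vec s,\vec t])$ equals $e^{i\vec s'\cdot\vec y}$ times the kernel of $W(\vec t-\vec s)$, where $\vec s'$ has the $I_\le$-coordinates of $\vec s$ and zeros elsewhere; since $|e^{i\vec s'\cdot\vec y}|=1$, the $L^2$ inner products of these kernels over varying $\vec t$ coincide with those of the kernels of $W(\vec t-\vec s)$, giving equality of covariance functions and hence the claimed equality in finite-dimensional distributions, including the degenerate identification $W([\vec 0,\vec u])\equiv W(\vec u)$.

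For the $(E',H)$-operator-scaling property, I would substitute $\vec t\mapsto\lambda^{E'}\vec t$, i.e.\ $t_k\mapsto\lambda^{1/\alpha'_k}t_k$, into the representation and perform the change of variables $\vec y\mapsto(\lambda^{-E'}\vec y)$ (with Jacobian $\lambda^{-q(E')}$, absorbed into a rescaling of $\wt{\mathcal M}$ using that $\wt{\mathcal M}(d(\lambda^{E'}\vec y))\overset{d}{=}\lambda^{q(E')/2}\wt{\mathcal M}(d\vec y)$ as Gaussian measures with Lebesgue control measure). This turns each factor $\frac{e^{i\lambda^{1/\alpha'_k}t_k y_k}-1}{iy_k}$ for $k\in I_\le$ into $\lambda^{1/\alpha'_k}\frac{e^{it_k y_k}-1}{iy_k}$, contributing $\lambda^{\sum_{k\in I_\le}1/\alpha'_k}$; the prefactor contributes $\lambda^{\sum_{k\in I_>}1/\alpha'_k}$; and $|\log\psi(\pi_\ge\lambda^{-E'}\vec y)|^{-1}$ must be handled via the homogeneity of $\log\psi$. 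The key computation here is that on $\H_\ge$ one has $\pi_\ge E''=\gamma_0\pi_\ge E$ and $\pi_\ge E'$ agrees with $\frac{1}{\gamma_0}\pi_\ge E''=\pi_\ge E$ only up to the scalar structure, so $\log\psi(\pi_\ge\lambda^{-E'}\vec y)$ must be rewritten in terms of $\lambda^{-\gamma_0}$ times $\log\psi(\pi_\ge\vec y)$ on that subspace, using $\rho_k=\alpha_k/\alpha'_k=\gamma_0$ for $k\in I_=$ together with the convention for $k\in I_>$; collecting the powers of $\lambda$ yields the exponent $H=\sum_{k\in I_\le}\frac{1}{\alpha'_k}+\sum_{k\in I_>}\frac{1}{\alpha'_k}+\gamma_0+\frac{q(E')}{2}-\frac{q(E'')}{2}$, which, after noting $\sum_k\frac1{\alpha'_k}=q(E')$, simplifies to $H=\gamma_0+q(E')-\frac{q(E'')}{2}$, matching the normalization in Theorem~\ref{thm:main}. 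The main obstacle is bookkeeping: carefully tracking how the three projections $\pi_<,\pi_=,\pi_>$ interact with the matrices $E,E',E''$ under the scaling $\lambda^{-E'}$ and confirming that the exponents of $\lambda$ coming from the $I_>$-prefactor, the $I_\le$-Fourier factors, the Jacobian, and the $|\log\psi|^{-1}$ factor add up exactly to $H$; once the identity $\pi_\ge E''=\gamma_0\pi_\ge E$ and the relation $\rho_k=\gamma_0$ on $I_=$ are used, everything is a routine accounting of exponents, and there is no analytic difficulty since integrability of the kernels has already been established in the proof of Lemma~\ref{lem:cov}.
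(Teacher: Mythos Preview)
Your approach via the harmonizable representation is sound and more explicit than the paper, which simply declares the proposition a ``direct consequence of Theorem~\ref{thm:main}'' without details. Part~\itref{label:increments} is correctly argued: the telescoping in the $I_\le$ factors and the multilinearity in the $I_>$ prefactor do produce the kernel of $W(\vec t-\vec s)$ up to the unimodular phase $e^{i\,\pi_\le\vec s\cdot\vec y}$, and this gives equality of covariances.

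Part (ii) has a genuine gap. The change of variables $\vec y\mapsto\lambda^{-E'}\vec y$ does \emph{not} make $|\log\psi(\pi_\ge\,\cdot\,)|$ transform by a scalar when $I_>\neq\emptyset$: for $k\in I_>$ one has $\rho_k<\gamma_0$, so $(\lambda^{-E'}\vec y)_k=\lambda^{-\rho_k/\alpha_k}y_k\neq\lambda^{-\gamma_0/\alpha_k}y_k$, and the $E$-homogeneity cannot be invoked. Your phrase ``together with the convention for $k\in I_>$'' papers over exactly this point. The correct substitution is $\vec y\mapsto\lambda^{-E''}\vec y$: on $I_\le$ this coincides with $\lambda^{-E'}$ (so the Fourier factors still simplify), while on $I_\ge$ one has $\pi_\ge E''=\gamma_0\,\pi_\ge E$, which gives $|\log\psi(\pi_\ge\lambda^{-E''}\vec y)|^{-1}=\lambda^{\gamma_0}|\log\psi(\pi_\ge\vec y)|^{-1}$ by $E$-homogeneity. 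The Gaussian measure then contributes $\lambda^{-q(E'')/2}$, not $\lambda^{-q(E')/2}$.

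This also explains why your final bookkeeping is wrong as written: the displayed sum
\[
\sum_{k\in I_\le}\frac{1}{\alpha'_k}+\sum_{k\in I_>}\frac{1}{\alpha'_k}+\gamma_0+\frac{q(E')}{2}-\frac{q(E'')}{2}
\]
equals $\tfrac32 q(E')+\gamma_0-\tfrac12 q(E'')$, not $\gamma_0+q(E')-\tfrac12 q(E'')$. With the $E''$ change of variables the accounting becomes
\[
\underbrace{q(\pi_>E')}_{\text{prefactor}}+\underbrace{q(\pi_\le E')}_{\text{Fourier factors}}+\underbrace{\gamma_0}_{|\log\psi|^{-1}}-\underbrace{\tfrac12 q(E'')}_{\text{Jacobian}}
=q(E')+\gamma_0-\tfrac12 q(E'')=H,
\]
which is the desired exponent.
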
 
We can say more about the directional increments $\Delta_\delta\topp jW(\vec t)$. First of all, as a special case of Proposition~\ref{prop:increments0}, \iteqref{label:increments},  $W(\vec t)$ viewed as a process indexed by $t_j\in\R$ has stationary increments. Moreover,
simple dependence properties in the directions corresponding to $I_{>}$ and $I_<$, if not empty, are given below. Following ideas from \citep[Definition 2.2]{puplinskaite13aggregation} we state the following proposition. Recall that $|I_>|\leq 1$.

\begin{proposition}\label{prop:increments} The random field $W$ satisfies the following properties:
\begin{enumerate}[(i)]
\item  When $I_{>} = \{j\}$, the random field $W$ has invariant  increments in the direction $\vec e_j$: for all $h, \delta \in\R$, $\vec t\in\R^d$, we have $\Delta_\delta^{(j)}W(\vec t+h\vec e_j)=\Delta_\delta^{(j)}W(\vec t)$.
\item When $I_{<}\neq \emptyset$, the random field $W$ has independent  increments in any direction $\vec e_j$ with $j\in I_{<}$: for all $\delta>0, \vec t\in \R^d$,
$\Delta_\delta^{(j)}W(\vec t)$  is independent from $W(\vec t)$. 
\end{enumerate}
\end{proposition}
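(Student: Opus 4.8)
The plan is to work directly from the harmonizable representation
$$W(\vec t)=\sigma_X\left(\prod_{k\in I_>}t_k\right)\int_{\R^d}\pp{\prod_{k\in I_{\le}}\frac{e^{it_{k} y_k}-1}{iy_k}} |\log\psi(\pi_{\ge}\vec y)|^{-1}\wt{\mathcal M}(d\vec y),$$
since both assertions are statements about finite-dimensional distributions of a Gaussian field, hence (after centering) purely about covariances and the linear span of the relevant random variables. For part (i), when $I_>=\{j\}$, the point is that the factor $\prod_{k\in I_>}t_k = t_j$ appears \emph{outside} the integral and the integrand does not depend on $t_j$ at all (because $j\notin I_\le$). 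Thus $W(\vec t)=t_j\, V(\vec t_{-j})$ where $\vec t_{-j}$ collects the remaining coordinates and $V$ is the obvious integral. Then the directional increment is $\Delta_\delta^{(j)}W(\vec t)=(t_j+\delta)V(\vec t_{-j})-t_jV(\vec t_{-j})=\delta\,V(\vec t_{-j})$, which manifestly does not depend on $h$ when $t_j$ is replaced by $t_j+h$; this gives $\Delta_\delta^{(j)}W(\vec t+h\vec e_j)=\Delta_\delta^{(j)}W(\vec t)$ as an almost-sure identity (not merely in distribution), which is even stronger than asked.

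For part (ii), fix $j\in I_<$ and $\delta>0$, $\vec t\in\R^d$. Since $(W(\vec t),\Delta_\delta^{(j)}W(\vec t))$ is a centered Gaussian vector, independence is equivalent to $\E[W(\vec t)\,\Delta_\delta^{(j)}W(\vec t)]=0$. Using the isometry property of $\wt{\mathcal M}$, this covariance equals
$$\sigma_X^2\left(\prod_{k\in I_>}t_k^2\right)\int_{\R^d}\Bigl(\prod_{k\in I_\le,\,k\neq j}\frac{|e^{it_ky_k}-1|^2}{y_k^2}\Bigr)\,\frac{(e^{it_jy_j}-1)\,\overline{\bigl(e^{i(t_j+\delta)y_j}-1\bigr)-\bigl(e^{it_jy_j}-1\bigr)}}{y_j^2}\,|\log\psi(\pi_\ge\vec y)|^{-2}\,d\vec y,$$
where I have used that the increment $\Delta_\delta^{(j)}W(\vec t)$ replaces the $j$-th harmonizable factor $\frac{e^{it_jy_j}-1}{iy_j}$ by $\frac{e^{i(t_j+\delta)y_j}-e^{it_jy_j}}{iy_j}=e^{it_jy_j}\frac{e^{i\delta y_j}-1}{iy_j}$ while leaving every other factor and the $\log\psi$ weight unchanged. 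The inner conjugated factor simplifies to $e^{-it_jy_j}\overline{(e^{i\delta y_j}-1)}$, so the $y_j$-part of the integrand becomes $\frac{(e^{it_jy_j}-1)e^{-it_jy_j}\overline{(e^{i\delta y_j}-1)}}{y_j^2}=\frac{(1-e^{-it_jy_j})\overline{(e^{i\delta y_j}-1)}}{y_j^2}$. The key observation is that $\pi_\ge\vec y$, and hence $|\log\psi(\pi_\ge\vec y)|^{-2}$, does not depend on $y_j$ because $j\in I_<$ means $y_j$ is a coordinate of $\H_<$, which is annihilated by $\pi_\ge$. Therefore the integral \emph{factorizes over $y_j$}: it is a product of an integral over the remaining variables and the one-dimensional integral
$$\int_\R \frac{(1-e^{-it_jy_j})\overline{(e^{i\delta y_j}-1)}}{y_j^2}\,dy_j.$$
This one-dimensional integral vanishes: by~\eqref{eq:covfBm} with $H=1/2$ it is (up to the constant $2\pi C_{1/2}$) $\Cov(B_{1/2}(-t_j),B_{1/2}(\delta))=\Cov(-B_{1/2}(t_j),B_{1/2}(\delta))$, and since $B_{1/2}$ is a standard Brownian motion with independent increments over the disjoint intervals $(-t_j,0)$ and $(0,\delta)$ (when $t_j,\delta>0$; the case of general sign is handled identically using stationarity of increments), this covariance is $0$. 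More directly, one can note $\Cov(B_{1/2}(t),B_{1/2}(s))=\tfrac12(|t|+|s|-|t-s|)$ and the increment $B_{1/2}(t)-B_{1/2}(t)$ structure; either way the quantity is zero. Hence $\E[W(\vec t)\Delta_\delta^{(j)}W(\vec t)]=0$ and independence follows.

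The main obstacle — such as it is — is bookkeeping rather than mathematics: one must carefully track which factors in the harmonizable integrand depend on which coordinates, and in particular justify the factorization of the integral over the $y_j$-variable for $j\in I_<$, which rests entirely on the identity $\pi_\ge\vec y$ being independent of those coordinates and on the absolute integrability established in the proof of Lemma~\ref{lem:cov} (so that Fubini applies). Once the factorization is in place, the vanishing of the one-dimensional Brownian covariance is immediate. I would also remark that part (ii) in fact shows that, for $j\in I_<$, the process $t_j\mapsto W(\vec t)$ has orthogonal, hence independent, increments over disjoint intervals — i.e. it is a (time-changed) Brownian motion in that coordinate — which is the structural content behind the degeneracy $H_k=1/2$ in those directions; but the stated proposition only requires the single orthogonality relation above.
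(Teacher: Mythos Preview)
Your proof is correct and follows essentially the same route as the paper's: for part (i) you exploit that when $I_>=\{j\}$ the coordinate $t_j$ appears only as a linear prefactor outside the stochastic integral, and for part (ii) you compute the covariance via the isometry, use $j\in I_<$ to factor off the $y_j$-integral (since $\pi_\ge$ annihilates that coordinate), and identify the resulting one-dimensional integral with a Brownian covariance that vanishes. The paper argues identically, writing the one-dimensional piece as $2\pi C_{\vec e_j}(\vec t)\,\Cov(B_{1/2}(t_j+\delta)-B_{1/2}(t_j),B_{1/2}(t_j))$ and invoking independent increments of $B_{1/2}$; your parenthetical remark that ``general sign is handled identically by stationarity of increments'' is a slight overreach (stationarity alone does not give the needed orthogonality when $t_j<0$), but the paper's own proof is equally silent on this point.
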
 
\begin{proof}
Let $\langle\vec e_j\rangle^\perp$ denote the subspace of $\Rd$ orthogonal to $\vec e_j$. Let $\pi_{\langle\vec e_j\rangle^\perp}$ and $\lambda_{\langle\vec e_j\rangle^{\perp}}$ denote the corresponding projection and Lebesgue measure, respectively.
First, let us simply remark that for $I_{>} = \{j\}$, $\delta\in\R$, and  $\vec t\in\R^d$, 
\equh\label{eq:Deltaj}
\Delta_\delta^{(j)}W(\vec t)=\delta W(\pi_{\langle \vec e_j\rangle^{\perp}}(\vec t)+\vec e_j),
\eque
which does not depend on $t_j$. 
The desired statement then follows.
For the second statement,  when $j\in I_{<}$, 
\begin{multline}\label{increments}
\Delta_\delta^{(j)}W(\vec t)=\sigma_X\left(\prod_{k\in I_>}t_k\right)\\
\times \int_{\R^d}
\frac{e^{it_jy_j}\left(e^{i\delta y_j}-1\right)}{iy_j}\left(\prod_{k\in I_{\le};k\neq j}\frac{e^{it_{k} y_k}-1}{iy_k}\right) |\log\psi(\pi_{\ge}\vec y)|^{-1}\wt{\mathcal M}(d\vec y). 
\end{multline}
Therefore
$$\mbox{Cov}(\Delta_\delta^{(j)}W(\vec t),W(\vec t))=C_{\vec e_j}(\vec t)\int_{\R}\frac{\left(e^{i\delta y_j}-1\right)\left(1-e^{i t_j y_j}\right)}{|y_j|^2}dy_j,$$
with $$C_{\vec e_j}(\vec t)=\sigma_X^2\left(\prod_{k\in I_>}t_k\right)^2\int_{\langle \vec e_j \rangle^\perp}\prod_{k\in I_{\le};k\neq j}\left|\frac{e^{it_{k} y_k}-1}{iy_k}\right|^2 |\log\psi(\pi_{\ge}\vec y)|^{-2}d\lambda_{\langle \vec e_j \rangle^\perp}(\vec y).$$
Hence,
$\mbox{Cov}(\Delta_\delta^{(j)}W(\vec t),W(\vec t))=2\pi C_{\vec e_j}(\vec t)\mbox{Cov}(B_{1/2}(t_j+\delta)-B_{1/2}(t_j),B_{1/2}(t_j))$, with $B_{1/2}$ a standard Brownian motion on $\R$. By independent increments of $B_{1/2}$, we obtain that $\mbox{Cov}(\Delta_\delta^{(j)}W(\vec t),W(\vec t))=0$ for $\delta\ge 0$. Since $W$ is a Gaussian field, we conclude that $\Delta_\delta^{(j)}W(\vec t)$  is independent from   $W(\vec t)$. 
\end{proof}

Let us quote that our definitions of invariant and independent increments are not the ones used in  \citep[Definition 2.2]{puplinskaite13aggregation}. However we remark that invariant increments in the direction $e_{\vec j}$ lead to invariant rectangular increments in the sense that,  for all $\delta\in \R$, and $\vec s<\vec t$
$$W([\vec s+\delta \vec e_j,\vec t+\delta \vec e_j]) {=} W([\vec s,\vec t]).$$
This follows from the fact that $$W([\vec s+\delta \vec e_j,\vec t+\delta \vec e_j]) =\Delta_{t_1-s_1}^{(1)}\Delta_{t_2-s_2}^{(2)}\ldots\Delta_{t_d-s_d}^{(d)}W(\vec s+\delta \vec e_j).$$
Indeed, computing first  $\Delta_{t_j-s_j}^{(j)}W(\vec s+\delta e_j)=\Delta_{t_j-s_j}^{(j)}W(\vec s)$, we obtain the desired result.

When the increments are either invariant or independent in at least one direction, we say that $W$ has degenerate increments. Otherwise, we say that $W$ has non-degenerate increments. 

\begin{example}
When $d=2$, choosing $E'=\mbox{diag}(1,\beta)$ for $\beta>0$ as in \citep{puplinskaite13aggregation} we obtain that $|I_{=}|=2$
if and only if $\rho_1=\rho_2$, that is $\beta=\frac{\alpha_2}{\alpha_1}$. It follows that for $\beta\neq \frac{\alpha_2}{\alpha_1}$,
one has $|I_{=}|=1$ and $W$ has either independent or invariant increments in the orthogonal direction.
However, when $\beta=\frac{\alpha_2}{\alpha_1}$ we get
$$W(\vec t)=\sigma_X \int_{\R^2}\pp{\prod_{k=1}^2\frac{e^{it_{k} y_k}-1}{iy_k} }|\log\psi(\vec y)|^{-1}\wt{\mathcal M}(d\vec y), \forall \vec t\in \R^2.$$
In this case, $W$ has non-degenerate increments. Recall that
all possible non-critical cases in $d=2$ have been provided in Theorem~\ref{thm:non-critical} in introduction. 
\end{example}
More generally for $d\geq 2$  we can state the following scaling-transition property.
\begin{corollary} The random field $(X_{\vec j})_{\vec j\in\Z^d}$, defined in Section \ref{randomfield}, exhibits a scaling-transition in the sense that
\begin{enumerate}[(i)]
\item If there exists $c>0$ such that $E'=c E$, then $W$ has non-degenerate  increments;
\item Otherwise, $W$ has degenerate increments. That is, there exists at least one direction in which the increments of the limit random field are either invariant or independent.
\end{enumerate}
\end{corollary}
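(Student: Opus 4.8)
The plan is to translate the statement into the combinatorics of the index sets $I_<,I_=,I_>$ attached to the pair $(E,E')$ and then to treat the two regimes separately. The starting point is the elementary equivalence
\[
E'=cE\ \text{for some }c>0\ \Longleftrightarrow\ \rho_1=\cdots=\rho_d\ \Longleftrightarrow\ I_==\{1,\dots,d\}\ (\text{i.e.\ }I_<=I_>=\emptyset).
\]
The first equivalence is immediate, since $E'=cE$ means $1/\alpha'_k=c/\alpha_k$, i.e.\ $\rho_k=1/c$ for every $k$. For the second, if all $\rho_k$ equal a common value $\rho$, then the choice $\gamma=\rho$ in~\eqref{def:gamma0} satisfies $\sum_{k:\gamma\ge\rho_k}1/\alpha_k=q(E)>2$ and $\sum_{k:\gamma>\rho_k}1/\alpha_k=0\le2$, whence $\gamma_0=\rho$ and $I_==\{1,\dots,d\}$; conversely, since $I_<,I_=,I_>$ partition $\{1,\dots,d\}$, $I_==\{1,\dots,d\}$ forces $\rho_k=\gamma_0$ for every $k$. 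Thus the dichotomy of the corollary is exactly ``$I_<=I_>=\emptyset$'' versus ``$I_<\cup I_>\neq\emptyset$''.

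Part~(ii) is then immediate from Proposition~\ref{prop:increments}. If $E'$ is not a positive multiple of $E$ then $I_<\cup I_>\neq\emptyset$; if $I_>\neq\emptyset$ it is a singleton $\{j\}$ (recall $|I_>|\le1$) and $W$ has invariant increments in the direction $\vec e_j$ by Proposition~\ref{prop:increments}(i), while if $I_>=\emptyset$ then $I_<\neq\emptyset$ and $W$ has independent increments in the direction $\vec e_j$ for each $j\in I_<$ by Proposition~\ref{prop:increments}(ii). Either way $W$ has degenerate increments.

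For Part~(i), assume $E'=cE$, so $I_==\{1,\dots,d\}$, $\pi_\ge$ is the identity, and the harmonizable representation of Section~\ref{sec:property} becomes $W(\vec t)=\sigma_X\int_{\R^d}\prod_{k=1}^d\frac{e^{it_ky_k}-1}{iy_k}|\log\psi(\vec y)|^{-1}\wt{\mathcal M}(d\vec y)$. I would rule out both forms of degeneracy in every direction $j$. For invariant increments, the Wiener isometry gives
\[
\Var\big(\Delta_\delta^{(j)}W(\vec t+h\vec e_j)-\Delta_\delta^{(j)}W(\vec t)\big)=\sigma_X^2\int_{\R^d}|e^{ihy_j}-1|^2\,\frac{|e^{i\delta y_j}-1|^2}{|y_j|^2}\prod_{k\neq j}\frac{|e^{it_ky_k}-1|^2}{|y_k|^2}\,|\log\psi(\vec y)|^{-2}\,d\vec y,
\]
whose integrand is positive on a set of positive Lebesgue measure as soon as $h\neq0$, $\delta\neq0$ and $t_k\neq0$ for $k\neq j$ (using $|\log\psi|^{-2}>0$ everywhere, since $\log\psi$ vanishes only at $\vec0$); hence this variance is positive and the increments in direction $\vec e_j$ are not invariant. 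For independent increments, suppose $\Delta_\delta^{(j)}W(\vec t)$ were independent of $W(\vec t)$ for all $\delta>0$ and $\vec t$. Fix $\vec s$ with $s_j=0$ and $s_k\neq0$ for $k\neq j$; then $W(\vec s)=0$ (the $j$-th factor vanishes), and combining this with the stationarity of the directional increments of $W$ in direction $\vec e_j$ (Proposition~\ref{prop:increments0}(i) and the ensuing discussion) the Gaussian process $Y(u):=W(\vec s+u\vec e_j)$ satisfies $Y(0)=0$, has stationary increments, and has each increment independent of the current value, so $\Cov(Y(v)-Y(u),Y(u))=0$ for $0\le u\le v$; writing $g(u)=\Var(Y(u))$, these facts force $g(v)-g(u)=g(v-u)$, hence (nonnegativity and monotonicity) $g(u)=c|u|$ for some $c\ge0$. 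On the other hand the isometry yields
\[
\Var(Y(u))=\sigma_X^2\int_{\R}\frac{1-\cos(uy_j)}{\pi y_j^2}\,\kappa(y_j)\,dy_j,\qquad\kappa(y_j):=\int_{\R^{d-1}}|\log\psi(y_j\vec e_j+\vec y_{-j})|^{-2}\prod_{k\neq j}\frac{|e^{is_ky_k}-1|^2}{2\pi y_k^2}\,d\vec y_{-j},
\]
where $\kappa$ is a.e.\ positive (so $Y\not\equiv0$), locally integrable, and, by the $E$-homogeneity of $\log\psi$ together with $\log\psi$ vanishing only at $\vec0$, satisfies $\kappa(y_j)\to0$ as $|y_j|\to\infty$; a change of variables $z=uy_j$ then gives $\Var(Y(u))=o(|u|)$ as $u\to0$, incompatible with $g(u)=c|u|$ unless $c=0$, i.e.\ $Y\equiv0$ — which is excluded. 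Thus the increments in direction $\vec e_j$ are not independent either, and since $j$ was arbitrary $W$ has non-degenerate increments.

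The first two paragraphs and the ``invariant'' half of Part~(i) are bookkeeping with the index sets and the harmonizable isometry. The step I expect to be the main obstacle is the ``independent'' half of Part~(i): showing that in the critical regime no one-dimensional section $u\mapsto W(\vec s+u\vec e_j)$ can be a Brownian motion. The substance there is that $\log\psi$, being the log-characteristic function of a \emph{full} operator-stable law on $\R_+^d$, genuinely couples all coordinates, so that the kernel $\kappa$ is never constant; quantitatively this rests on a scaling estimate for $\kappa$ built from the $E$-homogeneity of $\log\psi$, for which Lemmas~\ref{Qhomogene} and~\ref{integrabilite} provide the relevant tools.
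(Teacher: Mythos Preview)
The paper states this Corollary without proof, treating it as a direct consequence of Proposition~\ref{prop:increments} (for part~(ii)) and of the form of the critical covariance (for part~(i)). Your reduction of the dichotomy to $I_<\cup I_>=\emptyset$ versus $I_<\cup I_>\neq\emptyset$, your argument for part~(ii), and your ``not invariant'' half of part~(i) are correct and are exactly what the paper has in mind.

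There is a genuine gap in your ``not independent'' half. Your strategy --- reduce to $g(u):=\Var(Y(u))$ being linear and then contradict via decay --- is sound, but the step ``$\kappa(y_j)\to0$ as $|y_j|\to\infty$, hence a change of variables $z=uy_j$ gives $\Var(Y(u))=o(|u|)$'' is not justified as written. After the substitution one has $\Var(Y(u))/|u|=\sigma_X^2\int_\R\frac{1-\cos z}{\pi z^2}\,\kappa(z/u)\,dz$, and pointwise convergence $\kappa(z/u)\to0$ is not enough: you need a dominating function. The only uniform bound available from $E$-homogeneity (the one behind $\kappa\to0$) is $\kappa(y)\le C|y|^{-2\alpha_j}$, which after substitution gives the dominator $C|z|^{-2\alpha_j}\cdot\frac{1-\cos z}{z^2}$; this is integrable near $z=0$ only when $\alpha_j<1/2$. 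For $\alpha_j\ge 1/2$ the argument fails (and $\kappa$ need not even be bounded near $0$, since local integrability of $|\log\psi(\vec y_{-j})|^{-2}$ on $\langle\vec e_j\rangle^\perp$ requires $q(E)-1/\alpha_j>2$, which is not guaranteed).

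The fix is already in the paper, just one lemma later. Lemma~\ref{lem:increments} (for $I_>=\emptyset$) gives directly, for all $\vec u\in[-1,1]^d$ and each $j\in I_=$,
\[
\Var\big(\Delta_\delta^{(j)}W(\vec u)\big)\le
\begin{cases}
C|\delta|^{1+2\alpha_j}, & \alpha_j<1/2,\\
C\max(\delta^2,|\delta|^{2H_j}) \text{ for any }H_j\in(0,1), & \alpha_j=1/2,\\
C\delta^2, & \alpha_j>1/2,
\end{cases}
\]
so in every case $g(\delta)=\Var(Y(\delta))=o(|\delta|)$ as $\delta\to0$, contradicting $g(\delta)=c|\delta|$ with $c>0$ (and $c>0$ since the harmonizable integrand is a.e.\ positive). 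Invoking Lemma~\ref{lem:increments} closes the gap cleanly; alternatively, you may argue via uniqueness of the spectral measure for stationary-increment Gaussian processes that linearity of $g$ forces $\kappa$ to be constant a.e., which contradicts $\kappa(y)\le C|y|^{-2\alpha_j}\to0$.
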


In the sequel, we need to control the variance of the directional increments.  By Proposition~\ref{prop:increments}, for all $\vec u\in\R^d, \delta\in\R$,
\equh\label{eq:var>}
\Var(\Delta_\delta^{(j)}W(\vec u)) = \delta^2\Var(W(\pi_{\langle \vec e_j\rangle^{\perp}}(\vec u)+\vec e_j)), j\in I_>,
\eque and
\equh\label{eq:var<}
\Var(\Delta_\delta^{(j)}W(\vec u)) = |\delta|\Var(W(\pi_{\langle \vec e_j\rangle^{\perp}}(\vec u)+\vec e_j)), j\in I_<.
\eque
The control for $j\in I_=$ is a little more involved, as summarized in the following lemma.  
\begin{lemma}\label{lem:increments}
There exist some constants $C$ such that for all $\vec u\in[-1,1]^d,\delta\in\R$, $j\in I_=$, the following inequalities hold.
\begin{enumerate}[(a)]
\item \label{item:a}
If $|I_>|=1$ or $I_>=\emptyset$ and $\alpha_j<1/2$, 
\equh\label{eq:var=<}
\Var(\Delta_\delta^{(j)}W(\vec u))\leq C|\delta|^{2\beta_j} \mbox{ with } \beta_j = \alpha_j\pp{1-\frac{q(\pi_>E)}2}+\frac12.
\eque
\item \label{item:b} If $I_> = \emptyset, \alpha_j = 1/2$, then 
\equh\label{eq:var==}
\Var(\Delta_\delta^{(j)}W(\vec u)) \leq C\max(\delta^2,|\delta|^{2H_j}) \mbox{ for all } H_j\in(0,1).
\eque
\item \label{item:c} If $I_> = \emptyset, \alpha_j>1/2$, then 
\equh\label{eq:var=>}
\Var(\Delta_\delta\topp jW(\vec u))\leq C\delta^2.
\eque
\end{enumerate}
\end{lemma}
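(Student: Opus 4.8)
The plan is to bound the variance of the directional increment $\Delta_\delta^{(j)}W(\vec u)$ for $j\in I_=$ using the harmonizable representation, which gives
\[
\Var(\Delta_\delta^{(j)}W(\vec u)) = \sigma_X^2\left(\prod_{k\in I_>}u_k\right)^2\int_{\R^d}\frac{|e^{i\delta y_j}-1|^2}{|y_j|^2}\prod_{k\in I_\le;k\neq j}\left|\frac{e^{iu_ky_k}-1}{iy_k}\right|^2|\log\psi(\pi_\ge\vec y)|^{-2}\,d\vec y.
\]
Since $\vec u\in[-1,1]^d$, the factor $(\prod_{k\in I_>}u_k)^2$ is at most $1$, and each factor $|e^{iu_ky_k}-1|^2/|y_k|^2$ is bounded by $\min\{u_k^2,4/|y_k|^2\}\le\min\{1,4/|y_k|^2\}$. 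So the main work is to estimate the one-dimensional-in-$y_j$ behaviour of the remaining integral, after integrating out all directions $y_k$ with $k\neq j$. First I would integrate over $\H_<$ and over the directions $k\in I_=\setminus\{j\}$ exactly as in the proof of Lemma~\ref{lem:cov} (the function $h$ there), using Fubini and the local/global integrability of $|\log\psi|^{-2}$ established via Lemma~\ref{integrabilite}; this leaves an integral in $(y_j,\pi_>\vec y)$ against $|e^{i\delta y_j}-1|^2/|y_j|^2$ times a function that is $(\pi_{\ge,\,j}E)$-homogeneous in the surviving variables.

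For part \eqref{item:a}, the key is a scaling substitution in the variable $y_j$ together with the directions indexed by $I_>$: because $\log\psi$ restricted to $\H_\ge$ is $(\pi_\ge E)$-homogeneous with $q(\pi_\ge E)>2$ while $q(\pi_>E)<2$, one can rescale $y_j\mapsto |\delta|^{-\alpha_j}y_j$ and, correspondingly, the $I_>$-variables, so that $|e^{i\delta y_j}-1|^2$ becomes $|e^{iy_j}-1|^2$ at the cost of a prefactor that is a power of $|\delta|$. Carefully tracking the Jacobian (which contributes $|\delta|^{\alpha_j}$ from $dy_j$ and $|\delta|^{\alpha_j q(\pi_>E)}$-type factors from rescaling the $I_>$ block, since these variables are scaled by $\delta^{\pi_>E}$ along the homogeneity) against the homogeneity exponent of $|\log\psi|^{-2}$ yields the exponent $2\beta_j$ with $\beta_j=\alpha_j(1-q(\pi_>E)/2)+\tfrac12$. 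One must check the rescaled integral is finite uniformly in $\delta$ and $\vec u$; this is where I would reuse the integrability estimates from Lemma~\ref{lem:cov}, noting that after rescaling the integrand is dominated by an $\vec u$- and $\delta$-independent integrable function (using $\alpha_j<1/2$ to control the small-$y_j$ behaviour — this guarantees $2\beta_j<1$, but more importantly ensures the rescaled integral converges near $y_j=0$ where $|e^{iy_j}-1|^2/|y_j|^2\approx 1$ and the homogeneous factor may blow up).

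For parts \eqref{item:b} and \eqref{item:c}, here $I_>=\emptyset$, so there is no $I_>$-block to rescale and the prefactor $(\prod_{k\in I_>}u_k)^2$ is absent. In case \eqref{item:c}, $\alpha_j>1/2$ means $q(\pi_\ge E)$ is "large enough" in the $j$-direction that $|\log\psi|^{-2}$ is integrable against $1$ near $y_j=\infty$ in that coordinate, so one simply bounds $|e^{i\delta y_j}-1|^2/|y_j|^2\le \delta^2$ pointwise and obtains the $\delta^2$ bound directly from integrability of the rest — no rescaling needed. Case \eqref{item:b} is the borderline $\alpha_j=1/2$: here I would split the $y_j$-integral at $|y_j|=1/|\delta|$, bounding $|e^{i\delta y_j}-1|^2\le\delta^2 y_j^2$ on the inner part and $|e^{i\delta y_j}-1|^2\le 4$ on the outer part; the inner part contributes $\delta^2$ times a logarithmically-divergent-looking integral cut at $1/|\delta|$, and the outer part similarly, giving a bound of order $\delta^2\log(1/|\delta|)$, which is absorbed into $\max(\delta^2,|\delta|^{2H_j})$ for any $H_j<1$.

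The main obstacle I anticipate is part \eqref{item:a}: correctly bookkeeping the scaling exponents when $I_>$ is nonempty. The subtlety is that the surviving measure after integrating out $\H_<$ and $I_=\setminus\{j\}$ is homogeneous under the joint action of $E$ on the $\{j\}\cup I_>$ block, and the substitution that normalizes $\delta$ must act on all these variables simultaneously; getting the Jacobian power of $|\delta|$ exactly right — and then verifying that the resulting $\delta$-independent integral is genuinely finite (which is exactly the content of the integrability arguments in Lemma~\ref{lem:cov}, now applied to the slightly different homogeneous function $\vec y\mapsto \inf_{\vec x\in\H_<}|\log\psi(\vec x+\vec y)|$ restricted to the $\{j\}\cup I_>$ subspace) — is the delicate computational heart. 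I would organize this by first proving the bound in the special case $\vec u=\vec 1$, then observing the general $\vec u\in[-1,1]^d$ case follows immediately from the uniform domination $\min\{u_k^2,4/|y_k|^2\}\le\min\{1,4/|y_k|^2\}$.
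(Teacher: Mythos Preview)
Your setup and parts \iteqref{item:b} and \iteqref{item:c} are essentially correct and close to the paper's argument. The gap is in part \iteqref{item:a}, at the step where you integrate out the directions $k\in I_=\setminus\{j\}$ and claim the result is ``$(\pi_{\ge,j}E)$-homogeneous in $(y_j,\pi_>\vec y)$''. This is false as stated: the integrand mixes the non-homogeneous factors $\min\{1,4/|y_k|^2\}$ for $k\in I_=\setminus\{j\}$ with the homogeneous $|\log\psi(\pi_\ge\vec y)|^{-2}$, and integrating the product does \emph{not} produce a homogeneous function of the remaining variables. (Your reference to $\inf_{\vec x\in\H_<}|\log\psi(\vec x+\vec y)|$ does not help either, since $\log\psi(\pi_\ge\cdot)$ is already independent of $\H_<$; the infimum that would actually decouple the $I_=\setminus\{j\}$ block is over $\H_=\cap\langle\vec e_j\rangle^\perp$, not $\H_<$.) Consequently the joint rescaling you propose does not directly yield a clean power of $|\delta|$: after substituting $y_j\mapsto y_j/|\delta|$ and $y_m\mapsto|\delta|^{-\alpha_j/\alpha_m}y_m$ for $m\in I_>$, the factors for $k\in I_=\setminus\{j\}$ remain $\delta$-dependent in a way that does not obviously admit a uniform integrable majorant.

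The paper avoids this by first establishing the pointwise inequality
\[
|\log\psi(\vec x)|^{-1}\le c_1\bigl(|\log\psi(\pi_>\vec x)|+|x_j|^{\alpha_j}\bigr)^{-1},\qquad c_1=\max_{\vec\theta\in S_E}\frac{|\log\psi(\pi_>\vec\theta)|+|\theta_j|^{\alpha_j}}{|\log\psi(\vec\theta)|},
\]
obtained via the polar decomposition $\vec x=\tau(\vec x)^E\vec\theta(\vec x)$ and compactness of $S_E$. The right-hand side depends only on $(y_j,\pi_>\vec y)$, so the factors indexed by $I_\le\setminus\{j\}$ integrate out to finite constants independently; a single change of variables in the $\H_>$-integral then gives $f_j(y_j)\le c_3|y_j|^{-2\beta_j+1}$, and~\eqref{eq:var=<} follows from the one-dimensional identity~\eqref{eq:covfBm} (valid since $\beta_j<1$ in this case). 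Your scaling idea can be repaired by inserting this pointwise decoupling---or equivalently the bound $|\log\psi(\pi_\ge\vec y)|^{-2}\le\sup_{\vec x\in\H_=\cap\langle\vec e_j\rangle^\perp}|\log\psi(\vec x+y_j\vec e_j+\pi_>\vec y)|^{-2}$---\emph{before} integrating out the $I_=\setminus\{j\}$ directions; that step, not the rescaling bookkeeping, is the missing ingredient.
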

\begin{proof}
Recall~\eqref{eq:covfBm}.
 For $j\in I_{=}$,  for all $\vec u\in[-1,1]^d$ and $\delta\in\R$,
$$\Var\left(\Delta_\delta^{(j)}W(\vec u)\right)=\left(\sigma_X\prod_{k\in I_>}u_k\right)^2\int_{\R}\left|\frac{e^{i\delta y_j}-1}{iy_j} \right|^2f_j(y_j)dy_j,$$
with $$f_j(y_j)=\int_{\langle \vec e_j \rangle^\perp}\prod_{k\in I_{\le};k\neq j}\left|\frac{e^{iu_{k} y_k}-1}{iy_k}\right|^2 |\log\psi(\pi_{\ge}(\vec y+y_j\vec e_j))|^{-2}d\lambda_{\langle \vec e_j \rangle^\perp}(\vec y).$$
This is a locally integrable function over $\R$ for all values of $\alpha_j\in(0,1)$ due to the fact that $|\log\psi(\pi_\ge\vec y)|$ is a $\pi_\ge E$-homogeneous function, $q(\pi_\geq E)>2$, and Lemma~\ref{integrabilite}.
Furthermore, by $E$-homogeneity and polar coordinate $\vec x = \tau(\vec x)^E\vec\theta(\vec x)$,
\eqnh
|\log\psi(\vec x)|\inv & = & \frac{|\log\psi(\pi_>\vec x)|+|x_j|^{\alpha_j}}{|\log\psi(\vec x)|} (|\log\psi(\pi_>\vec x)|+|x_j|^{\alpha_j})\inv\\ 
& = & \frac{\tau(\vec x)|\log\psi(\pi_>\vec\theta(\vec x))|+\tau(\vec x)|\theta_j(\vec x)|^{\alpha_j}}{\tau(\vec x )|\log\psi(\vec\theta(\vec x))|} (|\log\psi(\pi_>\vec x)|+|x_j|^{\alpha_j})\inv\\
& \leq & c_1(|\log\psi(\pi_>\vec x)|+|x_j|^{\alpha_j})\inv
\eqne
with $c_1 = \max_{\vec\theta\in S_E}(|\log\psi(\pi_>\vec\theta)|+|\theta_j|^{\alpha_j})/|\log\psi(\vec\theta)|$. 
Thus,
\eqnh
f_j(y_j) & \leq & c_1^2\int_{\langle\vec e_j\rangle^\perp}\prod_{k\in I_\leq;k\neq j}\abs{\frac{e^{iu_ky_k}-1}{iy_k}}^2\pp{|\log\psi(\pi_>\vec y)|+|y_j|^{\alpha_j}}^{-2}d\lambda_{\langle\vec e_j\rangle^\perp}(\vec y)\\
& = & c_1^2\pp{\prod_{k\in I_\leq;k\neq j}\int_\R\abs{\frac{e^{iu_ky_k}-1}{iy_k}}^2dy_k}\int_{\H_>}\pp{|\log\psi(\pi_>\vec y)|+|y_j|^{\alpha_j}}^{-2}d\lambda_>(\vec y),
\eqne
and the second integral in the right-hand side above equals
\begin{multline*}
\int_{\H_>}|y_j|^{-2\alpha_j}\pp{\abs{\log\psi((|y_j|^{\alpha_j})^{-E}\pi_>\vec y)}+1}^{-2}d\lambda_>(\vec y)\\
= |y_j|^{-2\alpha_j+\alpha_jq(\pi_>E)}\int_{\H_>}(|\log\psi(\pi_>\vec y)|+1)^{-2}d\lambda_>(\vec y) =:|y_j|^{-2\beta_j+1}c_2
\end{multline*}
with $\beta_j = \alpha_j(1-q(\pi_>E)/2)+1/2$. We have thus obtained
\[
f_j(y_j)\leq c_3|y_j|^{-2\beta_j+1} \mbox{ with } c_3 = c_1^2c_2\prod_{k\in I_\leq;k\neq j}(2\pi u_k).
\]
Recall that $|I_>|\leq 1$.

\noindent \iteqref{item:a} In case that $|I_>|=1$, $q(\pi_>E)>1$ and thus $\beta_j<1$. Therefore by the above calculation and~\eqref{eq:covfBm}, 
\equh\label{eq:Hj}
\Var(\Delta_{\delta}^{(j)}W(\vec u^{(j)})) \leq \sigma_X^2c_3\int\abs{\frac{e^{i\delta y_j}-1}{iy_j}}^2|y_j|^{-2\beta_j+1}dy_j = \sigma_X^2c_3C_{\beta_j}|\delta|^{2\beta_j}.
\eque
In case that $|I_>|=0$, $\beta_j = \alpha_j + 1/2$. If $\alpha_j<1/2$, then the same bound~\eqref{eq:Hj} holds. 

\noindent\iteqref{item:b} If $\alpha_j = 1/2$, then for any $H_j\in(0,1)$,
\begin{eqnarray*}
\int_{\R}\left|\frac{e^{i\delta y_j}-1}{iy_j}\right|^2f_j(y_j)dy_j&\le & \delta^2\int_{|y_j|\le 1}f_j(y_j)dy_j+c_3\int_{|y_j|>1}\left|\frac{e^{i\delta y_j}-1}{iy_j}\right|^2|y_j|^{-2H_j+1}dy_j\\
&\le & c_4\max(\delta^2,|\delta|^{2H_j}),
\end{eqnarray*}
with $$c_4=\max_{\vec u \in [-1,1]^d}\int_{\langle \vec e_j \rangle^\perp}\prod_{k\in I_{\le};k\neq j}\left|\frac{e^{iu_{k} y_k}-1}{iu_k}\right|^2 |\log\psi(\pi_{\ge}\vec y)|^{-2}d\lambda_{\langle \vec e_j \rangle^\perp}(\vec y)+c_3C_{H_j}.$$
Therefore, $$\Var\left(\Delta_{\delta}^{(j)}W(\vec u^{(j)})\right)\le \sigma_X^2c_4\max(\delta^2, |\delta|^{2H_j}).$$
\iteqref{item:c} At last, if $\alpha_j>1/2$, then $\beta_j>1$, the function $f_j$ is integrable on $\R$ and
$$\int_{\R}\left|\frac{e^{i\delta y_j}-1}{iy_j}\right|^2f_j(y_j)dy_j\le \delta^2 \int_{\R}f_j(y_j)dy_j.$$
It then follows that
$$\Var\left(\Delta_{\delta}^{(j)}W(\vec u^{(j)})\right)\le c_5\delta^{2},$$
with $$c_5=\sigma_X^2\sup_{\vec u\in [-1,1]^d}\int_{\R^d}\prod_{k\in I_{\le};k\neq j}\left|\frac{e^{iu_{k} y_k}-1}{iy_k}\right|^2 |\log\psi(\pi_{\ge}\vec y)|^{-2}d\vec y.$$
\end{proof}

\subsection{Fractional Brownian sheets}\
Here we give a complete characterization of when $W$ is a fractional Brownian sheet. 
Recall that a zero-mean Gaussian random field $(X(\vec t))_{\vec t \in \R^d}$ is a standard fractional Brownian sheet with Hurst index $(H_1,\ldots,H_d)\in(0,1]^d$ if
\begin{equation}\label{eq:fBs}
\Cov(X(\vec t),X(\vec s))=\frac{1}{2^d}\prod_{i=1}^d\pp{|t_i|^{2H_i}+|s_i|^{2H_i}-|t_i-s_i|^{2H_i}}.
\end{equation}
Remark that we include the degenerate case that Hurst index equals 1. 

For the limit random field $W$, the covariance function can be factorized according to different directions as
\[
\Cov(W(\vec t),W(\vec s))=\frac{\sigma_X^2}{(2\pi)^{|I_>|}} \prod_{k\in I_<}\Cov(B_{1/2}(t_k),B_{1/2}(s_k))\cdot\prod_{k\in I_>}t_ks_k\cdot\Psi(\vec t,\vec s),
\]
with $\Psi(\vec t,\vec s)$ only depending on $\{t_k,s_k\}_{k\in I_=}$, given by
\[
\Psi(\vec t,\vec s) := \int_{\H_\ge}|\log\psi(\vec y)|^{-2}\prod_{k\in I_=}\frac{(e^{it_ky_k}-1)\wb{(e^{is_ky_k}-1)}}{2\pi |y_k|^2}d\lambda_\ge(\vec y).
\]
Recall $C_H$ in~\eqref{eq:covfBm}.

\begin{proposition}\label{prop:fBs}
Then $W$ is a fractional Brownian sheet, if and only if $|I_=|=1$. In this case, $\Psi(\vec t,\vec s)$ has the following expressions:
in case $I_==\{j\}, I_>=\emptyset$,
\equh\label{eq:psi1}
\Psi(\vec t,\vec s)
 = |\log\psi(\vec e_j)|^{-2}C_{\alpha_j+1/2}\Cov(B_{\alpha_j+1/2}(t_j),B_{\alpha_j+1/2}(s_j));
\eque
in case $I_==\{j\}, I_> = \{k\}$,
\equh\label{eq:psi2}
\Psi(\vec t,\vec s) = \int_{\H_>}|\log\psi(\vec y+\vec e_j)|^{-2}d\lambda_>(\vec y) C_{H_j}\Cov(B_{H_j}(t_j),B_{H_j}(s_j)),
\eque
with $H_j = \alpha_j(1-1/(2\alpha_k))+1/2$. 

\end{proposition}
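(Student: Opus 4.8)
Since for $k\in I_<$ the factor $\Cov(B_{1/2}(t_k),B_{1/2}(s_k))$ is the covariance of a fBm with Hurst index $\tfrac12$ and since $t_ks_k=\tfrac12(|t_k|^2+|s_k|^2-|t_k-s_k|^2)$ is the covariance of the (degenerate) fBm with Hurst index $1$, the field $W$ is a fractional Brownian sheet if and only if the remaining factor $\Psi(\vec t,\vec s)$ equals a positive constant times $\prod_{k\in I_=}\Cov(B_{H_k}(t_k),B_{H_k}(s_k))$ for some $(H_k)_{k\in I_=}$. So everything reduces to $\Psi$, and the dichotomy is governed by $|I_=|$.

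For the "if" direction ($|I_=|=1$, say $I_==\{j\}$) I would compute $\Psi$ explicitly. Using that $\log\psi$ is $E$-homogeneous and that $\psi(-\vec y)=\overline{\psi(\vec y)}$ (because $\nu$ is carried by $\R_+^d$), one gets $|\log\psi(y_j\vec e_j)|=|y_j|^{\alpha_j}|\log\psi(\vec e_j)|$. If $I_>=\emptyset$ this is already the whole integrand and \eqref{eq:covfBm} with $H=\alpha_j+\tfrac12$ gives \eqref{eq:psi1}; the constraint $H\in(\tfrac12,1)$ is exactly $q(\pi_\ge E)=1/\alpha_j>2$. If $I_>=\{k\}$ I would first integrate out the (one-dimensional) $\H_>$-variable: for fixed $y_j\ne0$, the substitution $\vec y'=(|y_j|^{\alpha_j})^{\pi_> E}\vec z$ together with homogeneity turns $\int_{\H_>}|\log\psi(y_j\vec e_j+\vec y')|^{-2}\,d\lambda_>(\vec y')$ into $c_j|y_j|^{\alpha_j/\alpha_k-2\alpha_j}$ with $c_j$ independent of $y_j$ (the two signs of $y_j$ give the same $c_j$ by $\psi(-\vec y)=\overline{\psi(\vec y)}$); then \eqref{eq:covfBm} with $1+2H_j=2+2\alpha_j-\alpha_j/\alpha_k$ yields \eqref{eq:psi2}, and the check $H_j=\tfrac12+\alpha_j(1-\tfrac1{2\alpha_k})\in(\tfrac12,1)$ is, once unwound, precisely the inequality $q(\pi_\ge E)=1/\alpha_j+1/\alpha_k>2$. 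These are routine computations.

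For the "only if" direction I would argue by contradiction: assume $|I_=|\ge2$ and that $\Psi(\vec t,\vec s)=c\prod_{k\in I_=}\Cov(B_{H_k}(t_k),B_{H_k}(s_k))$. Reducing away the at most one direction of $I_>$ exactly as above, write
\[
\Psi(\vec t,\vec s)=\int_{\H_=}G(\vec y)\prod_{k\in I_=}\frac{(e^{it_ky_k}-1)\overline{(e^{is_ky_k}-1)}}{2\pi|y_k|^2}\,d\lambda_=(\vec y),
\]
where $G(\vec y)=|\log\psi(\vec y)|^{-2}$ if $I_>=\emptyset$ and $G(\vec y)=\int_{\H_>}|\log\psi(\vec y+\vec y')|^{-2}\,d\lambda_>(\vec y')$ if $I_>=\{k_0\}$; the estimates in the proof of Lemma~\ref{lem:cov} show $G$ is continuous and strictly positive on $\H_=\setminus\{\vec 0\}$, and it is $(\pi_= E)$-homogeneous of degree $q(\pi_> E)-2$, which is strictly negative since $q(\pi_> E)<2$. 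Thus $W$, restricted to the $I_=$-coordinates, is a harmonizable Gaussian field with control measure proportional to $G(\vec y)\prod_k|y_k|^{-2}\,d\lambda_=$. The fractional Brownian sheet with indices $(H_k)$ has the analogous representation with control measure proportional to $\prod_k|y_k|^{-(1+2H_k)}\,d\lambda_=$, so matching covariances forces $G(\vec y)=c'\prod_{k\in I_=}|y_k|^{1-2H_k}$. Local finiteness of $\Psi(\vec t,\vec t)$ already gives $H_k\in(0,1)$ for each $k$ (the integrand behaves like $|y_k|^{1-2H_k}$ near $y_k=0$ and like $|y_k|^{-1-2H_k}$ near infinity). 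Since $|I_=|\ge2$, the set $\{\vec y\in\H_=\setminus\{\vec0\}:y_k=0\}$ is nonempty for every $k$: if some $H_k<\tfrac12$ then $G\equiv0$ there, contradicting $G>0$; if some $H_k>\tfrac12$ then $G\equiv+\infty$ there, contradicting finiteness; and if all $H_k=\tfrac12$ then $G\equiv c'$, contradicting its homogeneity of nonzero degree. In all cases we reach a contradiction, so $W$ is not a fractional Brownian sheet when $|I_=|\ge2$.

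The main obstacle is the uniqueness step in the last paragraph: justifying that $\Psi$ having the product-of-fractional-Brownian-covariances form forces the control measure to factorize as $\prod_k|y_k|^{-(1+2H_k)}$. The clean way is to differentiate $\Psi$ formally in all the variables $t_k,s_k$ (legitimate once one knows $H_k<1$), which removes the $(e^{i\,\cdot}-1)$ compensators and realizes $W|_{\H_=}$, after differentiation, as a genuine stationary random field whose spectral density equals $G(\vec y)\prod_k|y_k|^{-2}$ up to a constant; uniqueness of the spectral density (Bochner) then applies. After that, the homogeneity–positivity contradiction is immediate, and the explicit computation of $\Psi$ in the case $|I_=|=1$ is elementary.
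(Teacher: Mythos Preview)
Your ``if'' direction is essentially the paper's computation. For the ``only if'' direction, however, you take a genuinely different route. The paper does not identify the spectral density; instead it combines two ingredients already established in the paper. First, the $(E',H)$-operator-scaling property of $W$ (Proposition~\ref{prop:increments0}) forces the identity $\sum_k H_k/\alpha_k' = \gamma_0+q(E')-q(E'')/2$, rewritten as~\eqref{eq:H}. Second, the exact expressions~\eqref{eq:var>},~\eqref{eq:var<} and the upper bounds of Lemma~\ref{lem:increments} on $\Var(\Delta_\delta^{(j)}W(\vec u))$, compared against the fBs variance $|\delta|^{2H_j}\cdot C(\vec u)$ for both small and large $\delta$, pin down every $H_k$ explicitly (namely $H_k=1$ on $I_>$, $H_k=1/2$ on $I_<$, and on $I_=$ either $H_k=\alpha_k+1/2$ or $H_k=1$ according to whether $\alpha_k<1/2$ or $\alpha_k\ge 1/2$, when $I_>=\emptyset$). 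Plugging these values into~\eqref{eq:H} yields an arithmetic identity on $I_=$ that can only hold when $|I_=|=1$. Your spectral approach is more structural and arguably more transparent---the contradiction via positivity, continuity and nonzero-degree homogeneity of $G$ is clean---but it hinges on the uniqueness step you flag, which is not entirely free: one must rule out $H_k=1$ \emph{before} invoking spectral uniqueness (since the fBs side has no locally integrable spectral density when $H_k=1$), and then justify Bochner for a non-finite spectral measure. The paper's route avoids this by staying entirely within the variance estimates of Lemma~\ref{lem:increments}, at the cost of a small case analysis.
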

\begin{proof}
We first prove the `if part'. 
Suppose $I_= = \{j\}$.  In the case  $I_>=\emptyset$, 
\eqnh
\Psi(\vec t,\vec s) 
& = &\int_{\R}|\log\psi(y_j\vec e_j)|^{-2}\frac{(e^{it_jy_j}-1)\wb{(e^{is_jy_j}-1)}}{2\pi |y_j|^2}dy_j\\
& = &\int_\R|\log\psi((|y_j|^{\alpha_j})^E\vec e_j)|^{-2}\frac{(e^{it_jy_j}-1)\wb{(e^{is_jy_j}-1)}}{2\pi |y_j|^2}dy_j\\
& = &\int_\R|\log\psi(\vec e_j)|^{-2}\frac{(e^{it_jy_j}-1)\wb{(e^{is_jy_j}-1)}}{2\pi |y_j|^{2+2\alpha_j}}dy_j.
\eqne
Thus by~\eqref{eq:covfBm}, in case $I_==\{j\}, I_>=\emptyset$,~\eqref{eq:psi1} follows.
In the case $I_>\neq\emptyset$, 
\eqnh
\Psi(\vec t,\vec s) & = & \int_{\R}\int_{\H_>}|\log\psi(\vec y+y_j\vec e_j)|^{-2}\frac{(e^{it_jy_j}-1)\wb{(e^{is_jy_j}-1)}}{2\pi|y_j|^2}d\lambda_>(\vec y)dy_j\\
& = & \int_\R\int_{\H_>}|y_j|^{-2\alpha_j}|\log\psi((|y_j|^{-\alpha_j})^{E}(\vec y+y_j\vec e_j))|^{-2}\frac{(e^{it_jy_j}-1)\wb{(e^{is_jy_j}-1)}}{2\pi|y_j|^2}d\lambda_>(\vec y)dy_j\\
& = & \int_{\H_>}|\log\psi(\vec y+\vec e_j)|^{-2}d\lambda_>(\vec y)\int_\R \frac{(e^{it_jy_j}-1)\wb{(e^{is_jy_j}-1)}}{2\pi|y_j|^{2+2\alpha_j-\alpha_jq(\pi_>E)}}dy_j.
\eqne
That is, in case $I_==\{j\}, I_>\neq\emptyset$, for $H_j = \alpha_j(1-q(\pi_>E)/2)+1/2$,~\eqref{eq:psi2} follows.

Next, we prove the `only if part'. Suppose $W$ is a fractional Brownian sheet with Hurst indices $H_1,\dots,H_d$. From Proposition~\ref{prop:increments0}, $W$ is also $(E',H)$-operator-scaling with $H = \gamma_0+q(E')-q(E'')/2$. Then, it follows that
\[
\frac{H_1}{\alpha_1'}+\cdots+\frac{H_d}{\alpha_d'} = \gamma_0 + q(E')-q(E'')/2,
\]
or equivalently
\equh\label{eq:H}
\sum_{k\in I_\leq}\frac1{\alpha_k'}(H_k-1/2) + \sum_{k\in I_>}\frac1{\alpha_k'}(H_k-1) = \gamma_0\pp{1-\frac12\sum_{k\in I_>}\frac1{\alpha_k}}.
\eque
We consider the variance. By the assumption that $W$ is a fractional Brownian sheet, and the fact that $W$ has stationary directional increments, for all $j\in\{1,\dots,d\}$, for all $\delta\in\R$,
\equh\label{eq:varfBsj}
\Var(\Delta_\delta\topp jW(\vec u)) = |\delta|^{2H_j}\Var(W(\pi_{\langle \vec e_j\rangle^\perp}(\vec u) + \vec e_j)).
\eque 
Recall that $|I_>|\leq 1$. We first consider the case $I_>=\emptyset$. In this case,
\begin{itemize}
\item for $k\in I_<$, comparing~\eqref{eq:varfBsj} and~\eqref{eq:var<} yields $H_k= 1/2$,
\item for $k\in I_=, \alpha_k<1/2$, comparing~\eqref{eq:varfBsj} and~\eqref{eq:var=<} yields $H_k = \alpha_k+1/2$,
\item for $k\in I_=, \alpha_k=1/2$, comparing~\eqref{eq:varfBsj} and~\eqref{eq:var==} yields $H_k = 1$,
\item for $k\in I_=, \alpha_k>1/2$, comparing~\eqref{eq:varfBsj} and~\eqref{eq:var=>} yields $H_k = 1$.
 \end{itemize}
Then~\eqref{eq:H} becomes
 \[
 \sum_{k\in I_=, \alpha_k>1/2}\frac{\gamma_0}{2\alpha_k}+\sum_{k\in I_=, \alpha_k\leq 1/2}\gamma_0 = \gamma_0.
 \]
 Since $\alpha_k<1$, it then follows that $|I_=|=1$. Similarly, in the case $I_>\neq\emptyset$, say $I_> = \{1\}$, it follows from comparing the corresponding inequalities that
 \begin{itemize}
 \item $H_1 = 1$,
 \item for $k\in I_<$, $H_k = 1/2$,
 \item for $k\in I_=$, $H_k = \alpha_k(1-1/(2\alpha_1))+1/2$. 
 \end{itemize}
 Then,~\eqref{eq:H} becomes 
 \[
 \sum_{k\in I_=}\gamma_0\pp{1-\frac1{2\alpha_1}} = \gamma_0\pp{1-\frac1{2\alpha_1}},
 \]
 which implies $|I_=| = 1$. 
\end{proof}
\begin{remark}
When the limit is a fractional Brownian sheets, in directions corresponding to $I_>$, $I_<$ (if not empty) and $I_=$, the Hurst indices equals $1$, $1/2$ and some value in $(1/2,1)$, respectively. Thus, $W$ exhibits long-range dependence in the directions corresponding to $I_\geq$.
\end{remark}
As a concrete example, we prove Theorem~\ref{thm:non-critical}.
\begin{proof}[Proof of Theorem~\ref{thm:non-critical}]
\noindent{Case~\iteqref{d=2:1}:} when $\alpha_2'>\alpha_2, \alpha_2\in(0,1/2)$. In this case, $\gamma_0 = \rho_2=\alpha_2/\alpha_2'$, $E'' = {\rm diag}(1/\alpha_1,1/\alpha_2')$, $I_< = \{1\}, I_= = \{2\}$, $\beta = \alpha_2/\alpha_2'+\frac12(\frac1{\alpha_1}+\frac1{\alpha_2'})$ and $H_1 = 1/2$ are straight-forward. Then, by~\eqref{eq:psi1}, $H_2 = \frac12+\alpha_2$ and  $\sigma^2 = C_{H_2}|\log\psi(0,1)|^{-2}$.\medskip

\noindent{Case~\iteqref{d=2:2}:} when $\alpha_2'>\alpha_2, \alpha_2\in(1/2,1)$. In this case, $\gamma_0 = \rho_1=1$, $E'' = E$, $I_> = \{2\}, I_= = \{1\}$, $\beta = 1+\frac1{2\alpha_1} + \frac1{\alpha_2'} - \frac1{2\alpha_2}$ and $H_2 = 1$ are straight-forward. Then, by~\eqref{eq:psi2}, $H_1 = \frac12+\alpha_1(1-\frac1{2\alpha_2})$ and $\sigma^2 = C_{H_1}\int_\R|\log\psi(1,y)|^{-2}dy$.\medskip

The calculation of cases~\iteqref{d=2:3} and~\iteqref{d=2:4} are similar and thus omitted. One obtains $\sigma^2 = C_{H_1}|\log\psi(1,0)|^{-2}$ for case~\iteqref{d=2:3} and $\sigma^2 = C_{H_2}\int_\R|\log\psi(y,1)|^{-2}dy$ for case~\iteqref{d=2:4}.
\end{proof}
\subsection{Sample-path properties}\

We conclude this section by the following general sample-path properties for the random field $W$
that is a consequence of \citep[Proposition 5.3]{bierme09holder}.

\begin{proposition} \label{reg:lim} There exists a  modification $W^*$ of $W$ on $[0,1]^d$ such that for all $\epsilon>0$, almost surely there exists a finite random variable $Z$   such that for all $\vec s,\vec t\in [0,1]^d$,
$$|W^*(\vec t)-W^*(\vec s)|\le Z \rho(\vec t,\vec s){\log(1+\rho(\vec s,\vec t)^{-1})}^{1/2+\epsilon},$$
with $$\rho(\vec s,\vec t)=\sum_{j\in I> }|t_j-s_j| +\sum_{j\in I_< }|t_j-s_j|^{1/2}+\sum_{j\in I_{=} }|t_j-s_j|^{H_j},$$
where for $j\in I_=$, 
\begin{enumerate}[(a)]
\item $H_j = \alpha_j(1-q(\pi_>E)/2)+1/2$ if either $|I_>| = 1$ or $I_>=\emptyset$ and $\alpha_j<1/2$,
\item $H_j$ can take any value in $(0,1)$ if $I_>=\emptyset$ and $\alpha_j =1/2$, and 
\item $H_j = 1$ if $I_> = \emptyset$ and $\alpha_j>1/2$.
\end{enumerate}
\end{proposition}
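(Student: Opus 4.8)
The plan is to invoke the general H\"older-regularity result of \citet[Proposition 5.3]{bierme09holder}, which provides a modulus of continuity for a Gaussian random field in terms of an upper bound on the square root of the variance of its increments, expressed through a suitable (quasi-)metric $\rho$. The strategy is therefore to reduce the regularity statement to pointwise control of $\Var(W(\vec t) - W(\vec s))$ for $\vec s, \vec t \in [0,1]^d$, and then to assemble this from the directional increment estimates already established earlier in the section.

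First I would record that, since $W$ is Gaussian with stationary rectangular increments, it suffices to bound $\sqrt{\Var(W(\vec t)-W(\vec s))}$ by a sum of the form $\rho(\vec s,\vec t)$ as in the statement; the cited proposition then yields the modification $W^*$ and the almost-sure modulus $Z\,\rho(\vec t,\vec s)\log(1+\rho(\vec s,\vec t)^{-1})^{1/2+\epsilon}$. To obtain the bound on $\Var(W(\vec t)-W(\vec s))$, I would decompose the increment $W(\vec t)-W(\vec s)$ into a telescoping sum of $d$ one-directional increments, interpolating coordinate by coordinate between $\vec s$ and $\vec t$: writing $\vec u^{(0)} = \vec s$ and $\vec u^{(k)}$ obtained from $\vec u^{(k-1)}$ by replacing the $k$-th coordinate with $t_k$, one has $W(\vec t) - W(\vec s) = \sum_{k=1}^d \Delta^{(k)}_{t_k - s_k} W(\vec u^{(k-1)})$, and hence by the triangle inequality in $L^2$, $\sqrt{\Var(W(\vec t)-W(\vec s))} \le \sum_{k=1}^d \sqrt{\Var(\Delta^{(k)}_{t_k-s_k}W(\vec u^{(k-1)}))}$. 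Each $\vec u^{(k-1)}$ lies in $[-1,1]^d$ (in fact in $[0,1]^d$), so the estimates of Lemma~\ref{lem:increments}, together with~\eqref{eq:var>} and~\eqref{eq:var<}, apply directly.

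Next I would plug in the three regimes for the coordinates. For $j\in I_>$, equation~\eqref{eq:var>} gives $\Var(\Delta^{(j)}_\delta W(\vec u)) = \delta^2 \Var(W(\pi_{\langle \vec e_j\rangle^\perp}(\vec u)+\vec e_j))$, and since the latter variance is bounded uniformly over $\vec u\in[-1,1]^d$, this contributes a term $O(|t_j-s_j|)$. For $j\in I_<$, equation~\eqref{eq:var<} similarly gives a contribution $O(|t_j-s_j|^{1/2})$. For $j\in I_=$, Lemma~\ref{lem:increments} parts~\iteqref{item:a}--\iteqref{item:c} give, respectively, $\Var(\Delta^{(j)}_\delta W(\vec u)) \le C|\delta|^{2\beta_j}$ with $\beta_j = \alpha_j(1-q(\pi_>E)/2)+1/2$ when $|I_>|=1$ or ($I_>=\emptyset$ and $\alpha_j<1/2$); $\Var(\Delta^{(j)}_\delta W(\vec u)) \le C\max(\delta^2,|\delta|^{2H_j})$ for any $H_j\in(0,1)$ when $I_>=\emptyset$ and $\alpha_j=1/2$; and $\Var(\Delta^{(j)}_\delta W(\vec u))\le C\delta^2$ when $I_>=\emptyset$ and $\alpha_j>1/2$. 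Taking square roots and noting that on the bounded set $[0,1]^d$ one has $|t_j-s_j|\le 1$, so $\max(|\delta|,|\delta|^{H_j}) = |\delta|^{H_j}$ for $H_j\le 1$, each of these collapses to a single term $|t_j-s_j|^{H_j}$ with the $H_j$ named in the statement of the proposition. Summing the $d$ contributions produces exactly $\rho(\vec s,\vec t)$ up to a multiplicative constant, as required.

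The only genuine subtlety—rather than a real obstacle—is bookkeeping: one must check that the hypotheses of \citet[Proposition 5.3]{bierme09holder} are met by our $\rho$ (it is a quasi-metric built from a sum of concave powers of coordinate distances, which is the standard setting there), and one must be careful that the exponent $H_j$ appearing in the degenerate case $\alpha_j = 1/2$ is genuinely free in $(0,1)$, reflecting the $\max(\delta^2,|\delta|^{2H_j})$ bound rather than a sharp power law. There is also the trivial point that the uniform boundedness of $\Var(W(\pi_{\langle \vec e_j\rangle^\perp}(\vec u)+\vec e_j))$ over $\vec u\in[0,1]^d$ follows from continuity of the covariance function of $W$ on the compact set, which I would note in passing. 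With these checks in place the proposition is immediate.
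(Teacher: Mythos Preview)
Your proof is correct and follows essentially the same strategy as the paper: telescope $W(\vec t)-W(\vec s)$ into directional increments, bound each via \eqref{eq:var>}, \eqref{eq:var<}, and Lemma~\ref{lem:increments}, and then feed the resulting inequality $\sqrt{\Var(W(\vec t)-W(\vec s))}\le C\rho(\vec s,\vec t)$ into \citep[Proposition~5.3]{bierme09holder}. The one step the paper makes more explicit than you do is the ``bookkeeping'' you flag: it introduces the diagonal matrix $E'''$ with entries $1$ on $I_>$, $2$ on $I_<$, and $1/H_j$ on $I_=$, observes that $\rho(\vec 0,\cdot)$ is $E'''$-homogeneous and strictly positive on $\R^d\setminus\{\vec 0\}$, and hence by compactness of $S_{E'''}$ obtains $C\tau_{E'''}\le\rho(\vec 0,\cdot)\le C'\tau_{E'''}$, which is the precise form required for the cited proposition.
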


\begin{proof} Let us consider $E'''$ the diagonal matrix with entries corresponding to $1$ for 
$j\in I_>$, $2$ for $j\in I_<$ and  $1/H_j$ for $j\in I_=$. Let $\tau_{E'''}$ be the radial part with respect to $E'''$ according to \citep[Equation (9)]{bierme09holder}. Let us quote that since $\vec t\mapsto \rho(\vec0,\vec t)$
is $E'''$ homogeneous and strictly positive on $\R^d\smallsetminus\{\vec0\}$, following ideas of \citet[Theorem 3.2]{clausel11explicit}, the function $\vec t\mapsto \rho(\vec0,\vec t)/\tau_{E'''}(\vec t)$ is continuous and strictly positive
on the compact set $S_{E'''}$. It follows that we may find $C,C'>0$ such that for all $\vec t\in\R^d$,
$$C\tau_{E'''}(\vec t)\le \rho(\vec0,\vec t) \le C' \tau_{E'''}(\vec t).$$
Therefore, by \citep[Proposition 5.3]{bierme09holder} (with $\beta=0$), to show  Proposition \ref{reg:lim}  we prove
for $\vec t, \vec s\in [0,1]^d$ that
\equh\label{eq:boundrho}
\sqrt{\mathbb{E}\left((W(\vec t)-W(\vec s))^2\right)}=\sqrt{\mbox{Var}\left(W(\vec t)-W(\vec s)\right)}\le C\rho(\vec s,\vec t).
\eque
For $\vec t, \vec s\in [0,1]^d$, considering as in \citep{lacaux11invariance}, the sequence
($\vec u^{(j)})_{0\le j\le d}$ defined by $\vec u^{(0)}=\vec s$ and $\vec u^{(j+1)}=\vec u^{(j)}+(t_j-s_j)\vec e_j$ for $0\le j\le d-1$, we get $W(\vec t)-W(\vec s)=\sum_{j=1}^d\Delta_{(t_j-s_j)}^{(j)}W(\vec u^{(j)})$. Hence 
$$\sqrt{\mbox{Var}\left(W(\vec t)-W(\vec s)\right)}\le \sum_{j=1}^d\sqrt{\mbox{Var}\left(\Delta_{(t_j-s_j)}^{(j)}W(\vec u^{(j)})\right)}.$$

Now to obtain~\eqref{eq:boundrho}, it suffices to apply the bounds on the directional  increments established in Lemma~\ref{lem:increments}. Observe that in the case $j\in I_=, I_>=\emptyset$, since  $\delta = t_j-s_j\in[-1,1]$,  the right-hand side of~\eqref{eq:var==} becomes $C|\delta|^{2H_j}$. The details are omitted. The proof is thus completed.
 \end{proof}

Let us quote  that we probably could improve this result. Actually, following \citep{xiao09sample}, it is sufficient to get a similar lower bound on the variance on $[\varepsilon,1]^d$ to establish condition ($C_1$), from which Theorem 4.2 follows, saying that the inequality is true for $\epsilon=0$ and $Z$ has finite moments of any order.

\bibliographystyle{apalike}
\bibliography{references}
\end{document}